\definecolor{vert}{rgb}{0,0.6,0}
 \theoremstyle{plain}
 \newtheorem{thm}{Theorem}[section]
 \newtheorem{lem}[thm]{Lemma}
 \newtheorem{prop}[thm]{Proposition}
\newtheorem{rem}{\bf{Remark}}[section]
 \theoremstyle{definition}
 \numberwithin{equation}{section}
\newcommand{\N}{\mathbb{N}}
\newcommand{\R}{\mathbb{R}}
\newcommand{\T}{\mathbb{T}}
\newcommand{\cA}{\mathcal{A}}
\newcommand{\cM}{\mathcal{M}}
\newcommand{\UC}{{\rm UC\,}}
\newcommand{\USC}{{\rm USC\,}}
\newcommand{\LSC}{{\rm LSC\,}}
\newcommand{\Lip}{{\rm Lip\,}}
\newcommand{\Li}{L^{\infty}}
\newcommand{\W}{W^{1,\infty}}
\newcommand{\bO}{\partial\Omega}
\newcommand{\cO}{\overline\Omega}
\newcommand{\Q}{\Omega\times(0,\infty)}
\newcommand{\bQ}{\partial\Omega\times(0,\infty)}
\newcommand{\cQ}{\overline\Omega\times[0,\infty)}
\newcommand{\QT}{Q_T}
\newcommand{\al}{\alpha}
\newcommand{\gam}{\gamma}
\newcommand{\del}{\delta}
\newcommand{\ep}{\varepsilon}
\newcommand{\lam}{\lambda}
\newcommand{\sig}{\sigma}
\def\go{\omega}
\newcommand{\Om}{\Omega}
\newcommand{\ol}{\overline}
\newcommand{\pl}{\partial}
\def\aln{&\,}
\def\Dmp{\partial_p}
\def\tn{{\tilde n}}
\def\ve{u_\varepsilon}
\def\dfxi{\rho_{\xi}}
\def\limssup{\mathop{\rm limsup\!^*}}
\def\limiinf{\mathop{\rm liminf_*}}
\newcommand{\tim}{\times}
\def\SP{\mathrm{SP}}
\newcommand{\AC}{{\rm AC}}
\renewcommand{\d}{\,\mathrm{d}}
\newcommand{\gth}{\theta}
\newcommand{\gep}{\varepsilon}
\newcommand{\gd}{\delta}
\newcommand{\gs}{\sigma} 
\newcommand{\gb}{\beta}
\newcommand{\fr}{\frac}
\def\cG{\mathcal{G}}  
\newcommand{\hb}{\mbox}
\newcommand{\bye}{\end{document}}
\newcommand{\by}{\end{proof}\end{document}}
\def\gl{\lambda}
\def\gG{\varGamma}  
\def\ga{\alpha}
\def\gl{\lambda}
\def\gam{\gamma}
\def\gz{\zeta}
\def\Lip{\!\,{\rm Lip}\,}
\def\mid{\,:\,}
\def\aln{&\,}
\def\lab{\label}
\def\gL{\Lambda}
\def\x{\hat x}
\def\t{\hat t}
\begin{document}
\title[Large Time Behavior for H-J Equations with Nonlinear BC]{On the Large Time Behavior of Solutions of Hamilton-Jacobi Equations Associated with Nonlinear Boundary Conditions}

\author[G. BARLES, H. ISHII and H. MITAKE]
{Guy BARLES, Hitoshi ISHII and Hiroyoshi MITAKE}

\address[G. Barles]
{Laboratoire de Math\'ematiques et Physique 
Th\'eorique, F\'ed\'eration Denis Poisson, 
Universit\'e de Tours, 
Place de Grandmont, 
37200 Tours, FRANCE}
\email{barles@lmpt.univ-tours.fr}
\urladdr{http://www.lmpt.univ-tours.fr/~barles}

\address[H. Ishii]
{Faculty of Education and Integrated Arts and Sciences, Waseda University, Nishi-Waseda,
Shinjuku, Tokyo 169-8050, Japan/Faculty of Science, King Abdulaziz University, Jeddah, Saudi Arabia.}
\email{hitoshi.ishii@waseda.jp}
\urladdr{http://www.edu.waseda.ac.jp/~ishii/}

\address[H. Mitake]
{Department of Applied Mathematics,
Graduate School of Engineering
Hiroshima University
Higashi-Hiroshima 739-8527, Japan}
\email{mitake@hiroshima-u.ac.jp}

\keywords{Large-time behavior; Hamilton-Jacobi equations; Nonlinear boundary conditions;
Cauchy-Neumann problem; Ergodic problem; Weak KAM theory}
\subjclass[2010]{
35B40, 
35F25, 
35F30 
}

\thanks{This work was partially supported by the ANR (Agence Nationale de la Recherche) through the project ``Hamilton-Jacobi et th\'eorie KAM faible'' (ANR-07-BLAN-3-187245), by the KAKENHI (Nos. 20340019, 21340032, 21224001, 23340028, 23244015), JSPS and  by the Research Fellowship (22-1725)
for Young Researcher from JSPS} 

\date{\today}

\begin{abstract}
In this article, we study the large time behavior of solutions of first-order Hamilton-Jacobi Equations, set in a bounded domain with nonlinear Neumann boundary conditions, including the case of dynamical boundary conditions. We establish general convergence results for viscosity solutions of these Cauchy-Neumann problems by using two fairly different methods : the first one relies only on partial differential equations methods, which provides results even when the Hamiltonians are not convex, and the second one is an optimal control/dynamical system approach, named the ``weak KAM approach'' which requires the convexity of Hamiltonians and gives formulas for asymptotic solutions based on Aubry-Mather sets.
%
\end{abstract}

\maketitle


\tableofcontents

\section*{Introduction}

We are interested in this article in the large time behavior of solutions of first-order Hamilton-Jacobi Equations, set in a bounded domain with nonlinear Neumann boundary conditions, including the case of dynamical boundary conditions. The main originality of this paper is twofold : on one hand, we obtain results for these nonlinear Neumann type problems in their full generality, with minimal assumptions (at least we think so) and, on the other hand, we provide two types of proofs following the two classical approaches for these asymptotic problems : the first one by the PDE methods which has the advantages of allowing to treat cases when the Hamiltonians are non-convex, the second one by an optimal control/dynamical system approach which gives a little bit more precise description of the involved phenomena. 
For Cauchy-Neumann problems with linear Neumann boundary conditions, 
the asymptotic behavior has been established very recently and independently 
by the second author in \cite{I3} by using the dynamical approach and 
the first and third authors in \cite{BM} by using the PDE approach.

In order to be more specific, we introduce the following initial-boundary value problems
\begin{numcases}
{\textrm{(CN)} \hspace{1cm}}
u_t+H(x,Du)= 0 & in $\Q$, \nonumber \\
B(x,Du)=0 & on $\bQ$, \nonumber\\
u(x,0)=u_{0}(x) & on $\cO$  \nonumber
\end{numcases}
and 
\begin{numcases}
{\textrm{(DBC)} \hspace{1cm}}
u_t+H(x,Du)= 0 & in $\Q$, \nonumber \\
u_t+B(x,Du)=0 & on $\bQ$, \nonumber\\
u(x,0)=u_{0}(x) & on $\cO$, \nonumber
\end{numcases}
where 
$\Om$ is a bounded domain of $\R^{n}$ with a $C^{1}$-boundary and $u$ is a real-valued unknown function on $\cQ$. 
We, respectively, denote by $u_t:=\pl u/\pl t$ and $Du:=(\pl u/\pl x_1,\ldots,\pl u/\pl x_n)$ its time derivative and gradient with respect to the space variable. The functions $H(x,p), B(x,p)$ are given real-valued continuous function on $\cO \times \R^n$; more precise assumptions on $H$ and $B$ will be given at the beginning of Section~\ref{preliminary}.

Throughout this article, we are going to treat these problems by using the theory of viscosity solutions and thus the term ``viscosity" will be omitted henceforth. We also point out that the boundary conditions have to be understood in the viscosity sense: we refer the reader to the ``User's guide to viscosity solutions'' \cite{CIL} for a precise definition which is not recalled here.

The existence and uniqueness of solutions of (CN) or (DBC) are already well known. 
We refer to the articles \cite{B1, B2, B3, I} and the references therein. 

The standard asymptotic behavior, as $t \to +\infty$, for solutions of Hamilton-Jacobi Equations is the following : the solution $u(x,t)$ is expected to look like $-c t + v(x)$ where the constant $c$ and the function $v$ are solutions of an \textit{additive eigenvalue} or \textit{ergodic problem}. In our case, we have two different ergodic problems for (CN) and (DBC): indeed, looking for a solution of the form $- a t + w(x)$ for (CN), where $a$ is constant and $w$ a function defined on $\cO$, leads to the equation
\begin{numcases}
{{\rm (E1)} \hspace{1cm}}
H(x,Dw(x))= a & in  $\Om$,\nonumber\\
B(x,Dw(x))=0 & on $\bO$ \nonumber
\end{numcases}
while, for (DBC), the function $w$ has to satisfy
\begin{numcases}
{{\rm (E2)} \hspace{1cm}}
H(x,Dw(x))= a & in  $\Om$,\nonumber\\
B(x,Dw(x))= a & on $\bO$. \nonumber
\end{numcases}
We point out that one seeks, here, for a pair $(w,a)$ where $w\in C(\cO)$ and $a\in\R$ 
such that $w$ is a solution of (E1) or (E2). If $(w,a)$ is such a pair, we call $w$ an \textit{additive eigenfunction} or 
\textit{ergodic function} and $a$ an \textit{additive eigenvalue} 
or \textit{ergodic constant}. 

A typical result, which was first proved for Hamilton-Jacobi Equations set in $\R^{n}$ in the periodic case by  
P.-L. Lions, G. Papanicolaou and S. R. S. Varadhan \cite{LPV}, is that there exists a unique constant $a=c$ for which this problem has a {\em bounded} solution, while the associated solution $w$ may not be unique, even up to an additive constant. This non-uniqueness feature is  a key difficulty in the study of the asymptotic behavior.

The main results of this article are the following : under suitable (and rather general) assumptions on $H$ and $B$ \\
(i) There exists a unique constant $c$ such that {\rm (E1)} (resp., {\rm (E2)}) has a solution in $C(\cO)$.\\
(ii) If $u$ is a solution of {\rm (CN)} {\rm(}resp., {\rm (DBC)}{\rm)}, then there exists a solution 
$(v,c) \in C(\cO) \times\R$ of {\rm (E1)} {\rm(}resp., {\rm (E2)}{\rm)}, 
such that 
\begin{equation}\label{conv}
u(x,t)-(v(x)-ct)\to 0 \quad\textrm{uniformly on} 
\ \cO 
\ \textrm{as} \ t\to\infty. 
\end{equation}

The rest of this paper consists in making these claims more precise by providing the correct assumptions on $H$ and $B$, by recalling the main existence and uniqueness results on (CN) and (DBC), by solving {\rm (E1)} and {\rm (E2)}, and proving (i) and finally by showing the asymptotic result (ii). In an attempt to make the paper concise, we have decided to present the full proof of (ii) for (CN) only by the optimal control/dynamical system approach while we prove the (DBC) result only by the PDE approach. To our point of view, these proofs are the most relevant one, the two other proofs following along the same lines and being even simpler.

In the last decade, the large time behavior of solutions of Hamilton-Jacobi equation in compact manifold $\cM$ (or in $\R^n$, mainly in the periodic case) has received much attention and general convergence results for solutions have been established. 
G. Namah and J.-M. Roquejoffre in \cite{NR} are the first to 
prove \eqref{conv} 
under the following additional assumption 
\begin{equation}\label{assumption-NR}
H(x,p)\ge H(x,0) 
\ \textrm{for all} \ (x,p)\in\cM\times\R^{n} 
\ \textrm{and} \ 
\max_{\cM}H(x,0)=0,   
\end{equation}
where $\cM$ is a smooth compact $n$-dimensional 
manifold without boundary. 
Then A. Fathi in \cite{F2} proved 
the same type of convergence result 
by dynamical systems type arguments introducing the ``weak KAM 
theory''. Contrarily to \cite{NR}, the results of \cite{F2} use 
strict convexity (and smoothness) assumptions on $H(x,\cdot)$, 
i.e., $D_{pp}H(x,p)\ge\al I$ for all $(x,p)\in\cM\times\R^{n}$ 
and $\al>0$ (and also far more regularity) but do not require
\eqref{assumption-NR}. 
Afterwards J.-M. Roquejoffre \cite{R} and 
A. Davini and A. Siconolfi in \cite{DS} refined 
the approach of A. Fathi and 
they studied the asymptotic problem for 
Hamilton-Jacobi Equations on $\cM$ or $n$-dimensional torus. 
The second author, Y. Fujita, N. Ichihara and P. Loreti 
have investigated the asymptotic problem  
specially in the whole domain $\R^n$ 
without the periodic assumptions in various situations 
by using the dynamical approach which is inspired by 
the weak KAM theory. 
See \cite{FIL,I1,II1,II2,II3}.
The first author and P. E. Souganidis obtained in \cite{BS} more general results, for possibly non-convex Hamiltonians, by using an approach based on partial differential equations methods and viscosity solutions, which was not using in a crucial way the explicit formulas of representation of
the solutions.
Later, by using partially the ideas of \cite{BS} but also of \cite{NR}, 
results on the asymptotic problem for unbounded solutions were provided 
in \cite{BR}.

There also exists results on the asymptotic behavior of solutions of convex Hamilton-Jacobi Equation with boundary conditions. The third author \cite{M1} studied
the case of the state constraint boundary condition and then the Dirichlet boundary conditions \cite{M2, M3}. J.-M. Roquejoffre in \cite{R} was also dealing with solutions of the Cauchy-Dirichlet problem  which satisfy the Dirichlet boundary condition pointwise (in the classical sense)~: this is a key difference with the results of \cite{M2, M3} where the solutions were satisfying the Dirichlet boundary condition in a generalized (viscosity solutions) sense. These results were slightly extended in \cite{BM} by using an extension of PDE approach of \cite{BS}.

We also refer to the articles \cite{R,BeR} for the large time behavior of 
solutions to time-dependent Hamilton-Jacobi equations. 
Recently E. Yokoyama, Y. Giga and P. Rybka in \cite{YGR} and 
the third author with Y. Giga and Q. Liu in \cite{GLM1, GLM2} 
has gotten the large time behavior of solutions of 
Hamilton-Jacobi equations with noncoercive Hamiltonian 
which is motivated by a model describing growing faceted crystals. 
We refer to the article \cite{D} for the large-time asymptotics of solutions 
of nonlinear Neumann-type problems for viscous Hamilton-Jacobi equations. 

\medskip

This paper is organized as follows: 
in Section \ref{preliminary} we state the precise assumptions on $H$ and $B$, 
as well as some preliminary results on (CN), (DBC), (E1) and (E2). 
Section \ref{PDE} is devoted to the proof of convergence results \eqref{conv} 
by the PDE approach. 
In Section \ref{DSA} we devote ourselves to the proof of 
convergence results \eqref{conv} 
by the optimal control/dynamical 
system approach. 
Then we need to give the variational formulas for solutions 
of (CN) and (DBC) and results which are related to the weak KAM theory, 
which are new and interesting themselves. 
In Appendix we give the technical lemma which is used in Section \ref{PDE} and 
the proofs of basic results which are presented in Section \ref{preliminary}.

\medskip
Before closing the introduction, 
we give a few comments about our notation. 
We write $B_{r}(x)=\{y\in\R^{n}\mid |x-y|<r\}$ for $x\in\R^{n}$, 
$r>0$ and $B_{r}:=B_{r}(0)$. 
For $A\subset\R^{l}, B\subset\R^{m}$ for $l,m\in\N$ 
we denote by 
$C(A,B)$, $\LSC(A,B)$, $\USC(A,B)$, $\Lip(A,B)$ 
the space of real-valued 
continuous, lower semicontinuous, 
upper semicontinuous, 
Lipschitz continuous and  
on $A$ with values in $B$, respectively. 
For $p\in\R$ we denote by 
$L^{p}(A,B)$ and $\Li(A,B)$ 
the set of all measurable functions 
whose absolute value raised to the $p$-th power has finite integral 
and which are bounded almost everywhere on $A$ with values in $B$, 
respectively. 
We write $C^k(A)$ for 
the sets of $k$-th continuous differentiable functions 
for $k\in\N$.  
For given $-\infty<a<b<\infty$ and $x,y\in B$, 
we use the symbol $\AC([a,b],B)$ to denote the set of 
absolutely continuous functions on $[a,b]$ with values in $B$.    
We call a function $m:[0,\infty)\to[0,\infty)$ 
a modulus if it is continuous and nondecreasing 
on $[0,\infty)$ and vanishes at the origin.

\section{Preliminaries and Main Result}\label{preliminary}

In this section, we introduce the key assumptions on $H, B$ and we present basic PDE results on (CN) and (DBC) (existence, comparison,..., etc.) which will be used throughout this article. The proofs are given in the appendix. 

We use the following assumptions. 
\begin{itemize}
\item[{\rm(A0)}] $\Om$ is a bounded domain of $\R^{n}$ with a $C^{1}$-boundary.
\end{itemize}
In the sequel, we denote by $\rho : \R^n \to \R^n$ a $C^1$-defining function for $\Om$, i.e. a $C^1$-function which is negative in $\Om$, positive in the complementary of $\cO$ and which satisfies $D\rho(x)\neq 0$ on $\bO$. Such a function exists because of the regularity of $\Om.$ If $x \in \bO$, we have $D \rho(x)/|D\rho(x)|=n (x)$ where $n(x)$ is the unit outward normal vector to $\bO$ at $x.$ In order to simplify the presentation and notations, we will use below the notation $\tn (x)$ for $D\rho(x)$, even if $x$ is not on $\bO$. Of course, if $x\in \bO$, $\tn(x)$ is still an outward normal vector to $\bO$ at $x$, by assumption $\tn(x)$ does not vanish on $\bO$ but it is not anymore a unit vector.

\medskip

\begin{itemize}

\item[{\rm(A1)}]
The function $H$ is continuous and coercive, i.e., 
\[
\lim_{r\to\infty}\inf\{H(x,p)\mid x\in\cO, 
|p|\ge r \}=\infty. 
\]

\item[{\rm(A2)}] 
For any $R>0$, there exists a constant $M_{R}>0$ 
such that 
\[
|H(x,p)-H(x,q)|\le M_{R}|p-q|
\]
for all $x\in\cO$ and $p,q\in B_{R}$.

\item[{\rm(A3)}]
There exists $\theta>0$ such that 
\[
B(x,p+\lam \tn(x))
-B(x,p)
\ge\theta \lam
\]
for all $x\in\bO$, $p\in\R^{n}$ 
and $\lam\in\R$ with $\lam\ge0$.

\item[{\rm(A4)}]
There exists a constant 
$M_{B}>0$ such that 
\[
|B(x,p)-B(x,q)|\le M_{B}|p-q|
\]
for any $x\in\bO$ and $p,q\in \R^{n}$.

\item[{\rm(A5)}]
The function $p\mapsto B(x,p)$ is convex for any $x\in\bO$.
\end{itemize}

We briefly comment these assumptions. Assumption (A1) is classical when considering the large time behavior of solutions of Hamilton-Jacobi Equations since it is crucial to solve ergodic problems. Assumption (A2) is a non-restrictive technical assumption while (A3)-(A4) are (almost) the definition of a nonlinear Neumann boundary condition. Finally the convexity assumption (A5) on $B$ will be necessary to obtain the convergence result. We point out that the requirements on the dependence of $H$ and $B$ in $x$ are rather weak : this is a consequence of the fact that, because of (A1), we will deal (essentially) with Lipschitz continuous solutions (up to a regularization of the subsolution by sup-convolution in time. Therefore the assumptions are weaker than in the classical results (cf. \cite{B1,B2,B3,I}).

A typical example for $B$ is the boundary condition arising in 
the optimal control of processes with reflection which has 
control parameters:
\[
B(x,p)=\sup_{\al\in\cA}\{\gam_{\al}(x)\cdot p-g_{\al}(x)\}, 
\]
where 
$\cA$ is a compact metric space, 
$g_{\al}:\bO\to\R$ are given continuous functions and 
$\gam_{\al}:\cO\to\R^{n}$ is a continuous vector field which 
is oblique to $\bO$, i.e., 
\[
\tn(x)\cdot\gam_{\al}(x)\ge\theta 
\]
for any $x\in\bO$ and $\al\in\cA$.

Our first result is a comparison result.

\begin{thm}[Comparison Theorem for (CN) and (DBC)]\label{thm:comparison}
Let $u\in\USC(\cQ)$ and $v\in\LSC(\cQ)$ 
be a subsolution and a supersolution of {\rm (CN)} 
{\rm(}resp., {\rm (DBC)}{\rm)}, respectively. 
If $u(\cdot,0)\le v(\cdot,0)$ on $\cO$, 
then $u\le v$ on $\cQ$. 
\end{thm}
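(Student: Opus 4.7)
The plan is a standard doubling-of-variables comparison argument, tailored to the nonlinear oblique boundary condition. I argue by contradiction assuming $M := \sup_{\cQ}(u - v) > 0$ and detail the (CN) case; the (DBC) variant requires only minor modifications indicated at the end. To confine any candidate maximum to a bounded time window, I replace $v$ by $v^\eta(y,s) := v(y,s) + \eta/(T-s)$ on $\cQT$ with small $\eta > 0$ and large $T$: this $v^\eta$ is still a supersolution, still dominates $u$ at $s = 0$, and the blow-up as $s \to T$ keeps any maximum of $u - v^\eta$ in $\cO \times [0, T)$. If useful, a preliminary sup-convolution in time on $u$, combined with the coercivity (A1), may be applied upstream so as to assume $u$ Lipschitz on $\cQT$.

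The core step is the doubled functional
\begin{align*}
\Phi_\ep(x,y,t,s) := u(x,t) - v^\eta(y,s) - \frac{|x-y|^2}{2\ep^2} - \frac{(t-s)^2}{2\ep^2} - \Psi_\ep(x,y),
\end{align*}
where $\Psi_\ep$ is a penalty built from the $C^1$ defining function $\rho$ (standard choices combine $\pm\rho(x)$, $\mp\rho(y)$, and $(\rho(x)-\rho(y))^2$ with small parameters). Write $(\hat x, \hat y, \hat t, \hat s)$ for a maximizer, $\tau := (\hat t - \hat s)/\ep^2$, and $\xi_x, \xi_y$ for the $x$- and $y$-gradients of the test function at the maximum. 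The role of $\Psi_\ep$ is classical: when $\hat x \in \bO$ (resp.\ $\hat y \in \bO$), the gradient $\xi_x$ (resp.\ $\xi_y$) picks up a quantitatively controlled component in the direction $+\tn(\hat x)$ (resp.\ $-\tn(\hat y)$). The viscosity subsolution property of $u$ then yields
\begin{align*}
\tau + H(\hat x, \xi_x) \le 0 \ \text{or} \ B(\hat x, \xi_x) \le 0 \ (\text{latter only if } \hat x \in \bO),
\end{align*}
and the symmetric $\ge$ inequality (augmented by $+\eta/(T-\hat s)^2$) for $v^\eta$. By the strict obliqueness (A3), the chosen normal shifts make $B(\hat x, \xi_x)$ strictly positive and $B(\hat y, \xi_y)$ strictly negative, ruling out the $B$-alternatives on both sides and leaving only the interior $H$-inequalities in force. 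Standard penalization lemmas also ensure $|\hat x - \hat y|, |\hat t - \hat s| \to 0$ and $\hat t, \hat s > 0$ for the relevant parameter range.

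Subtracting the two $H$-inequalities gives
\begin{align*}
\frac{\eta}{T^2} \le H(\hat y, \xi_y) - H(\hat x, \xi_x),
\end{align*}
and the estimate $|\xi_x - \xi_y| \to 0$ together with the local Lipschitz property (A2) of $H(x,\cdot)$ and the uniform continuity of $H(\cdot, q)$ on compacts shows that the right-hand side tends to $0$ as $\ep \to 0$, a contradiction. For (DBC), the subsolution alternative at $\hat x \in \bO$ reads $\tau + B(\hat x, \xi_x) \le 0$ rather than $B(\hat x, \xi_x) \le 0$, but the obliqueness excess in $B$ still dominates $\tau$ for $\ep$ small, so the same argument applies verbatim. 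The main technical obstacle — the reason the proof is relegated to the appendix — is the simultaneous calibration of the small parameters in $\Psi_\ep$ against $\ep$: the boundary penalty must be strong enough to enforce strict $B$-excesses via (A3), and thus kill both boundary alternatives, yet weak enough not to pollute the limit passage in the $H$-difference.
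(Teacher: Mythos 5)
Your overall skeleton (contradiction, doubling of variables, strict-sign argument at the boundary to kill the $B$-alternatives, then comparing the two $H$-inequalities via (A2)) is the right shape, and your $\eta/(T-s)$ barrier and optional sup-convolution in time are consistent with what the paper does. But there is a genuine gap at the exact point you defer to ``standard choices'' of $\Psi_\ep$: for a \emph{nonlinear} $B$ satisfying only (A3)--(A4), a penalty built from $\pm\rho(x)$, $\mp\rho(y)$ and $(\rho(x)-\rho(y))^2$ does not give the strict inequalities $B(\hat x,\xi_x)>0$ and $B(\hat y,\xi_y)<0$. Assumption (A3) only says that $B(\hat x,\,p+\lam\tn(\hat x))\ge B(\hat x,p)+\theta\lam$ for the \emph{given} base gradient $p$; at your maximum point the base gradient is $p=(\hat x-\hat y)/\ep^2$, which is large and of uncontrolled direction, so $B(\hat x,p)$ itself can be of order $-M_B|p|$, and no small normal shift of size $O(\al)$ coming from the $\rho$-terms can restore a sign. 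This is precisely why the paper does not use the naive quadratic test function: Lemmas \ref{lem:func-C2}--\ref{lem:coercivity} construct, by an implicit-function argument based on (A3)--(A4), a gradient-dependent normal correction $C^{\xi,\del}$ with $B(\xi,\,p+C^{\xi,\del}(p)\tn(\xi))\approx 0$, and insert the term $-C^{\xi,\del}\bigl(\frac{x-y}{\ep^2}\bigr)\rho_\xi(x-y)$ (plus $A\rho_\xi(x-y)^2/\ep^2$ to recover coercivity of the test function) into $\chi$; only then do the $\al(\rho(x)+\rho(y))$ terms produce the strict excess $\pm\theta\al$ via (A3), uniformly in the size of $(\hat x-\hat y)/\ep^2$. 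This also forces the localization terms $|x-\xi|^2$, $(t-\tau)^2$, since $C^{\xi,\del}$ is built at the frozen boundary point $\xi$ and one must send $\hat x,\hat y\to\xi$ to compare $B(\hat x,\cdot)$ with $B(\xi,\cdot)$. Your proposal names this calibration as ``the main technical obstacle'' but does not supply the construction that overcomes it, and without it the two boundary alternatives cannot be excluded.

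The (DBC) part has a second, independent problem: you claim the obliqueness excess in $B$ ``still dominates $\tau=(\hat t-\hat s)/\ep^2$ for $\ep$ small''. There is no reason for that: the excess produced by the $\al\rho$ terms is $O(\theta\al)$, while $\tau$ is at best of the order of a (large) time-Lipschitz constant of $u$ and a priori need not even be bounded. The paper instead makes the normal correction depend on the time difference quotient as well, i.e.\ it uses $C^{\xi,\del}(p,q)$ solving $q+B(\xi,\,p+C^{\xi,\del}(p,q)\tn(\xi))=0$ (Lemma \ref{lem:func-C2}), together with the monotonicity $D_qC^{\xi,\del}\le 0$ and a sup-convolution in time of $u$, so that the whole quantity $u_t+B$ acquires the strict sign $\pm\theta\al$ at the boundary. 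Without an analogous device, the (DBC) case of your argument does not close.
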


Then, applying carefully Perron's method (cf. \cite{I0}), we have the existence of Lipschitz continuous solutions. 
\begin{thm}[Existence and Regularity of Solutions of (CN) and (DBC)]\label{thm:existence}
For any $u_0 \in C(\cO)$, there exists a unique solution $u\in \UC (\cQ)$ of {\rm (CN)} or {\rm (DBC)}.
Moreover, if $u_{0}\in\W(\Om)$, then $u$ is Lipschitz continuous on $\cQ$ and therefore $u_t$ and $Du$ are uniformly bounded. Finally, if $u$ and $v$ are the solutions which are respectively associated to $u_0$ and $v_0$, then
\begin{equation}\label{eq:contsol}
\|u-v\|_{L^{\infty}(\Q)}
\le 
\|u_0-v_0\|_{L^{\infty}(\Om)} \; .
\end{equation}

\end{thm}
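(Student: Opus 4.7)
Uniqueness and the contraction estimate \eqref{eq:contsol} are immediate consequences of Theorem~\ref{thm:comparison}. Both problems \textrm{(CN)} and \textrm{(DBC)} are invariant under addition of a real constant to the unknown $u$, since only derivatives of $u$ appear in $H$ and $B$; hence if $v$ is the solution with initial data $v_0$, the functions $v\pm\|u_0-v_0\|_{\Li(\Om)}$ are solutions of the same problem with initial data that bracket $u_0$, and applying the comparison theorem in both directions yields \eqref{eq:contsol}. Uniqueness is the particular case $u_0=v_0$.

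For existence with $u_0 \in \W(\Om)$, set $L:=\|Du_0\|_{\Li(\Om)}$. By (A1), (A2), (A4), the quantities $|H(x,p)|$ on $\cO\times\{|p|\le L\}$ and $|B(x,p)|$ on $\bO\times\{|p|\le L\}$ are bounded; let $C$ be larger than both. I claim that $\ol u(x,t):=u_0(x)+Ct$ and $\ul u(x,t):=u_0(x)-Ct$ are a supersolution and a subsolution, respectively, of either \textrm{(CN)} or \textrm{(DBC)}. Indeed, if $\phi$ is a $C^1$ test function touching $\ol u$ from below at $(x_0,t_0)$, then comparing difference quotients in $t$ gives $\phi_t(x_0,t_0)=C$, while $\phi(\cdot,t_0)-Ct_0$ touches $u_0$ from below at $x_0$, so $|D_x\phi(x_0,t_0)|\le L$. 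For \textrm{(DBC)} one obtains $\phi_t+B(x_0,D_x\phi)\ge C-|B|\ge 0$ at boundary points, which is the supersolution condition there; for \textrm{(CN)} the disjunctive viscosity formulation of the boundary condition reduces the check to the interior inequality $\phi_t+H\ge C-|H|\ge 0$, which is automatic. The subsolution verification for $\ul u$ is symmetric. Ishii's Perron method \cite{I0} then produces a solution $u$ with $\ul u\le u\le\ol u$, so that $|u(x,t)-u_0(x)|\le Ct$.

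Autonomy now yields time-Lipschitz continuity: since $(x,t)\mapsto u(x,t+h)$ solves the same problem with data $u(\cdot,h)$, estimate \eqref{eq:contsol} gives $\|u(\cdot,t+h)-u(\cdot,t)\|_{\Li(\Om)}\le \|u(\cdot,h)-u_0\|_{\Li(\Om)}\le Ch$. Lipschitz continuity in $x$ follows from the equation combined with coercivity (A1): at any interior point $(x_0,t_0)$ and for any $C^1$ test function $\phi$ touching $u$ from above, the time-Lipschitz bound forces $|\phi_t(x_0,t_0)|\le C$ by comparing difference quotients of $\phi$ and $u$, so $H(x_0,D_x\phi(x_0,t_0))\le C$, and (A1) yields $|D_x\phi(x_0,t_0)|\le R$ for some $R$ independent of the point. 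The symmetric estimate for subjets, together with a standard argument via sup/inf-convolutions in $x$, upgrades this into uniform Lipschitz continuity of $u(\cdot,t)$; hence $u\in\Lip(\cQ)$ with all derivatives uniformly bounded.

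Finally, for general $u_0\in C(\cO)$, approximate $u_0$ uniformly by a sequence $u_0^\gep\in\W(\Om)$ (for instance via Whitney extension followed by mollification) and let $u^\gep$ be the corresponding Lipschitz solutions; by \eqref{eq:contsol} the family $\{u^\gep\}$ is Cauchy in $\Li(\Q)$, and the standard stability of viscosity sub-/supersolutions under uniform convergence—valid for the nonlinear oblique Neumann framework of \cite{B1,B2,B3,I}—identifies the uniform limit as the desired solution in $\UC(\cQ)$. The step I expect to be most technical is the passage from the bounded-spatial-jet estimate to a genuine Lipschitz-in-$x$ bound on $u(\cdot,t)$ up to the boundary: this requires a further regularization (sup/inf-convolutions in space) combined with the oblique structure (A3)–(A4) to propagate the gradient bound to boundary points, a standard but slightly delicate manipulation in the Neumann setting.
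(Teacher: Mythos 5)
Your overall strategy is the same as the paper's (linear-in-time barriers $u_0\pm Ct$, Perron plus comparison, the $u(\cdot,\cdot+h)$ trick for the time-Lipschitz bound, coercivity (A1) for the space gradient, and density via \eqref{eq:contsol} for general $u_0\in C(\cO)$), but your verification that the barriers are sub/supersolutions contains a genuine flaw precisely at the point the paper is careful about: the boundary. At a point $x_0\in\bO$ the sub- and superdifferentials relative to $\cO$ are not bounded by the Lipschitz constant of $u_0$: if $q_0$ is a (sub)gradient of $u_0$ at $x_0$, then $q_0+\mu\,\tn(x_0)$ with $\mu\ge 0$ lies in the relative subdifferential, and $q_0+\lam\,\tn(x_0)$ with $\lam\le 0$ lies in the relative superdifferential. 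Hence your claim ``$\phi(\cdot,t_0)-Ct_0$ touches $u_0$ from below at $x_0$, so $|D_x\phi(x_0,t_0)|\le L$'' is false on $\bO$, and with it the estimates $C-|B|\ge 0$ and $C-|H|\ge 0$: the constant $C$ you fixed using only $\sup\{|H|,|B|\}$ over $\{|p|\le L\}$ does not control $H$ or $B$ at the shifted gradients. For (CN) this is not cosmetic: e.g.\ for the subsolution $u_0-Ct$ one must check $\min\{-C+H(x_0,q_0+\lam\tn(x_0)),\,B(x_0,q_0+\lam\tn(x_0))\}\le 0$ for every $\lam\le 0$, and by coercivity $H(x_0,q_0+\lam\tn(x_0))$ becomes large for $|\lam|$ large, so the $H$-branch alone cannot save you and your $C$ may simply be too small.

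The repair is exactly the paper's argument, and it is worth noting since it is the only nontrivial step: by (A3), $B(x_0,q_0+\lam\tn(x_0))\le B(x_0,q_0)+\theta\lam$, so there is a fixed $\bar\lam<0$ (depending only on $\theta$ and $\sup_{|p|\le L}|B|$) such that the $B$-branch of the viscosity inequality holds whenever $\lam\le\bar\lam$ (and symmetrically the $B$-branch holds for the supersolution once $\mu\ge|\bar\lam|$); the remaining bounded range of normal shifts is then absorbed by enlarging the constant to $M_1\ge\max\{|H(x,p+\lam\tn(x))|\mid x\in\bO,\ |p|\le L,\ \bar\lam\le\lam\le 0\}$. (For (DBC) your conclusion happens to be correct, but again because of the monotonicity (A3) — the $\pm\lam$-shifts move $u_t+B$ in the favorable direction — not because of the false gradient bound.) The rest of your proposal (time-Lipschitz via \eqref{eq:contsol} and autonomy, space-Lipschitz from the equation and (A1), approximation for $u_0\in C(\cO)$) matches the paper; the paper additionally smooths $u_0$ to $C^1$ approximations $u_0^k$ first, which is what lets it write the boundary superdifferential of the barrier explicitly, and you would be well advised to do the same rather than work with Lipschitz $u_0$ directly.
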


Finally we consider the additive eigenvalue/ergodic problems.

\begin{thm}[Existence of Solutions of (E1) and (E2)]\label{thm:additive}There exists a solution 
$(v,c)\in W^{1,\infty}(\Om)\times\R$ of {\rm (E1)} 
{\rm(}resp., {\rm (E2)}{\rm)}. 
Moreover, the additive eigenvalue is unique and is represented by 
\begin{equation}\label{additive-const-neumann}
c=\inf\{a\in\R\mid 
{\rm (E1)} \ {\rm(}resp., {\rm (E2)}{\rm)} \ \textrm{has a subsolution}\}. 
\end{equation}
\end{thm}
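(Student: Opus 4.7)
My plan is to construct $(v,c)$ by the classical vanishing--discount (ergodic approximation) method, and to deduce uniqueness of $c$ together with the representation \eqref{additive-const-neumann} directly from the Cauchy--Neumann comparison principle, Theorem~\ref{thm:comparison}. I describe the argument for (E1); the case (E2) is entirely parallel and uses the (DBC) comparison result in place of the (CN) one.

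\textbf{Existence via vanishing discount.} For each $\lambda>0$ I would first solve the discounted problem
\[
\lambda v_\lambda+H(x,Dv_\lambda)=0 \ \text{in }\Omega,\qquad B(x,Dv_\lambda)=0 \ \text{on }\partial\Omega,
\]
by Perron's method; the constants $\pm A/\lambda$ with $A\ge\max_{\cO}|H(\cdot,0)|$ serve as admissible sub/super-barriers, since in the viscosity sense the boundary inequality is then satisfied through the interior equation alone. This yields $v_\lambda\in C(\cO)$ with $\lambda\|v_\lambda\|_\infty\le A$. The coercivity (A1) immediately gives an interior Lipschitz bound on $v_\lambda$, and the strict obliqueness (A3) together with (A4) and the $C^1$-regularity of $\partial\Omega$ allows this bound to be propagated up to $\partial\Omega$ via a standard doubling-of-variables argument using the defining function $\rho$; the resulting estimate $\|Dv_\lambda\|_\infty\le C'$ is uniform in $\lambda$. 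Fixing $x_0\in\cO$ and setting $w_\lambda:=v_\lambda-v_\lambda(x_0)$ and $a_\lambda:=-\lambda v_\lambda(x_0)$, Arzel\`a--Ascoli provides a sequence $\lambda_j\to 0^+$ with $w_{\lambda_j}\to v$ uniformly on $\cO$ and $a_{\lambda_j}\to c\in\R$; since $\lambda_j w_{\lambda_j}\to 0$ uniformly, also $-\lambda_j v_{\lambda_j}\to c$ uniformly. Stability of viscosity solutions then yields $H(x,Dv)=c$ in $\Omega$ and $B(x,Dv)=0$ on $\partial\Omega$, so $(v,c)$ solves (E1).

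\textbf{Uniqueness and representation.} If $(v_1,a_1)$ and $(v_2,a_2)$ both solve (E1), then $u_i(x,t):=v_i(x)-a_i t$ is a solution of (CN) with initial datum $v_i$, so Theorem~\ref{thm:comparison} gives
\[
\bigl(v_1(x)-v_2(x)\bigr)-(a_1-a_2)\,t\ \le\ \|v_1-v_2\|_{L^\infty(\Omega)}\quad\text{on }\cQ;
\]
letting $t\to\infty$ forces $a_1\ge a_2$, and symmetry yields $a_1=a_2=:c$. Denote by $c^\star$ the infimum in \eqref{additive-const-neumann}; since $v$ is itself a subsolution of (E1) with $a=c$, we have $c^\star\le c$. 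Conversely, if $a<c$ and $w$ is a subsolution of (E1) for this $a$, then $w(x)-a t$ is a subsolution of (CN), and comparison against the solution $v(x)-c t$ gives $(w-v)(x)+(c-a)t\le\|w-v\|_{L^\infty(\Omega)}$ for all $t\ge 0$, contradicting $c>a$ as $t\to\infty$. Hence $c^\star=c$.

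\textbf{Main obstacle.} The delicate step is the $\lambda$-uniform Lipschitz estimate $\|Dv_\lambda\|_\infty\le C'$. Away from $\partial\Omega$ it is an immediate consequence of (A1) and the bound $\lambda\|v_\lambda\|_\infty\le A$, but obtaining a bound that is truly uniform up to the boundary requires the full strength of the obliqueness (A3), the Lipschitz-in-$p$ structure (A4), and the $C^1$-smoothness of $\partial\Omega$ through the defining function $\rho$; this is the point where the Neumann data genuinely intervenes. Once that estimate is secured, the ergodic limit as well as the uniqueness/representation arguments above are essentially routine.
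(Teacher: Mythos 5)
Your argument is essentially the paper's own: existence comes from the same discounted approximation $\ep u_\ep+H(x,Du_\ep)=0$ in $\Om$, $B(x,Du_\ep)=0$ on $\bO$ (resp.\ $\ep u_\ep+B=0$ for (E2)) solved by Perron's method between constant barriers, a discount-uniform Lipschitz bound, Arzel\`a--Ascoli and stability, and the uniqueness of $c$ together with \eqref{additive-const-neumann} follows, exactly as you argue, by comparing $w(x)-at$ with $v(x)-ct$ through Theorem~\ref{thm:comparison} (a step the paper treats as standard and does not write out). Two small corrections to your write-up: the constants $\pm A/\lambda$ must be tested at boundary points against the normal-shifted gradients $t\,\tn(x)$, not through ``the interior equation alone,'' so $A\ge\max_{\cO}|H(\cdot,0)|$ is in general too small and one needs $A$ to dominate $-H(x,t\,\tn(x))$ over the bounded range of $t$ on which $B(x,t\,\tn(x))$ has the wrong sign (this is precisely how $M_1$ is chosen in the paper's proof of Theorem~\ref{thm:existence}); conversely, the uniform Lipschitz estimate is easier than your ``main obstacle'' suggests, since $H(x,Dv_\lambda)\le\lambda\|v_\lambda\|_\infty$ together with (A1) and the $C^1$ regularity of $\pl\Om$ already yields it, with no need of (A3)--(A4) or a doubling argument.
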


The following proposition shows that, taking into account the ergodic effect, we obtain bounded solutions of (CN) or (DBC). 
This result is a straightforward consequence of 
Theorems \ref{thm:additive} and \ref{thm:comparison}.
\begin{prop}[Boundedness of Solutions of (CN) and (DBC)]\label{prop:bound}
Let $c$ be the additive eigenvalue for {\rm (E1)} 
{\rm(}resp., {\rm (E2)}{\rm)}. 
Let $u$ be the solution of {\rm (CN)} {\rm(}resp., {\rm (DBC)}{\rm)}. 
Then $u+ct$ is bounded on $\cQ$. 
\end{prop}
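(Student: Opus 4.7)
The plan is to use the ergodic pair $(v,c)$ supplied by Theorem~\ref{thm:additive} to build explicit sub- and supersolutions of the Cauchy problem that sandwich $u(x,t)+ct$ between two constants, and then invoke the comparison principle (Theorem~\ref{thm:comparison}).

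More concretely, fix a solution $(v,c)\in W^{1,\infty}(\Om)\times\R$ of (E1) (resp.\ (E2)). Since $\Om$ is bounded and $v\in W^{1,\infty}(\Om)$, $v$ is continuous on $\cO$, so the constant
\[
K:=\|u_0-v\|_{L^\infty(\Om)}
\]
is finite. Define $\phi^\pm(x,t):=v(x)-ct\pm K$. A direct computation shows that $\phi^\pm$ are (stationary-in-the-affine-sense) solutions of (CN) (resp.\ (DBC)): indeed,
\[
\phi^\pm_t+H(x,D\phi^\pm)=-c+H(x,Dv)=0\quad\text{in }\Q,
\]
and on $\bQ$ one has $B(x,D\phi^\pm)=B(x,Dv)=0$ for (CN) (from (E1)), while for (DBC)
\[
\phi^\pm_t+B(x,D\phi^\pm)=-c+B(x,Dv)=-c+c=0.
\]
The constant $\pm K$ does not affect any of these identities since $H$ and $B$ depend only on $Dv$.

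At $t=0$ we have $\phi^-(x,0)=v(x)-K\le u_0(x)\le v(x)+K=\phi^+(x,0)$ on $\cO$ by the very definition of $K$. Applying Theorem~\ref{thm:comparison} twice (once with $\phi^-$ as subsolution and $u$ as supersolution, once with $u$ as subsolution and $\phi^+$ as supersolution) yields
\[
v(x)-ct-K\le u(x,t)\le v(x)-ct+K\quad\text{on }\cQ,
\]
so that $\|u(x,t)+ct-v(x)\|_{L^\infty(\cQ)}\le K$. Since $v$ is bounded on $\cO$, this proves $u+ct\in L^\infty(\cQ)$.

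There is essentially no obstacle here: the only points to verify carefully are that the affine-in-$t$ functions $\phi^\pm$ genuinely satisfy the boundary condition in the viscosity sense (which is automatic because $v$ does, and shifting by a constant does not alter the gradient or the time derivative $-c$), and that the comparison theorem applies to pairs where one element is continuous and the other is merely semicontinuous — this is exactly the setting of Theorem~\ref{thm:comparison}. The same argument applies uniformly to both problems, the only difference being which ergodic equation is used in the computation of the boundary identity.
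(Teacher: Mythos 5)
Your proof is correct and is precisely the argument the paper has in mind: the Proposition is stated there as a ``straightforward consequence of Theorems \ref{thm:additive} and \ref{thm:comparison}'', i.e.\ comparing $u$ with $v(x)-ct\pm\|u_0-v\|_{L^\infty(\Om)}$, exactly as you do. Your remark that the viscosity boundary inequalities for $\phi^\pm$ are inherited from those of $v$ (since any test function touching $\phi^\pm$ at an interior time has time derivative $-c$) is the only point needing care, and you have handled it correctly.
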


From now on, replacing $u(x,t)$ by $u(x,t)+ct$, we can normalize the additive eigenvalue $c$ to be $0$. As a consequence $H$ is also replaced by $H-c$ and $B$ by $B-c$ in the (DBC)-case.
In order to obtain the convergence result, we use the following assumptions.
\begin{itemize}
\item[{\rm(A6)}]
Either of the following assumption 
{\rm (A6)$_{+}$} or {\rm (A6)$_{-}$} holds. 
\item[{\rm(A6)$_{+}$}] 
There exists $\eta_{0}>0$ such that, 
for any $\eta\in(0,\eta_{0}]$, 
there exists $\psi_\eta>0$ such that 
if $H(x,p+q)\ge\eta$ and $H(x,q)\le0$ for some 
$x\in\cO$ and $p,q\in\R^n$, then for any $\mu\in(0,1]$, 
\[
\mu H(x,\frac{p}{\mu}+q)\ge 
H(x,p+q)+\psi_\eta(1-\mu). 
\]

\item[{\rm(A6)$_{-}$}] 
There exists $\eta_{0}>0$ such that, 
for any $\eta\in(0,\eta_{0}]$,
there exists $\psi_{\eta}>0$ such that 
if $H(x,p+q)\le-\eta$ and $H(x,q)\le0$ for some 
$x\in\cO$ and $p,q\in\R^{n}$, then for any $\mu\ge1$, 
\[
\mu H(x,\frac{p}{\mu}+q)\le 
H(x,p+q)-\frac{\psi_{\eta}(\mu-1)}{\mu}. 
\]
\end{itemize}

In the optimal control/dynamical system approach, the following assumptions are used 

\begin{itemize}
\item[{\rm(A7)}] 
The function $H$ is convex, i.e., 
for each $x\in\Om$ the function 
$p\mapsto H(x,p)$ is convex on $\R^n$ and 
either of the following assumption 
{\rm (A7)$_{+}$} or {\rm (A7)$_{-}$} holds. 
\item[(A7)$_\pm$] There exists a modulus $\go$ satisfying $\go(r)>0$ for 
$r>0$ such that for any $(x,\,p)\in \bar\Om\times \R^n$, 
if $H(x,p)=c$, $\xi\in \Dmp H(x,\,p)$ and $q\in\R^n$, then
\[
H(x,\,p+q)\geq c+\xi\cdot q+\go((\xi\cdot q)_\pm).
\]
\end{itemize}
We point out that we use the notation $\Dmp H(x,\,p)$ for the convex subdifferential of the function $p\mapsto H(x,p)$ where $x$ is fixed.

Our main result is the following theorem.
\begin{thm}[Large-Time Asymptotics]\label{thm:large-time}
Assume {\rm (A0)-(A6)} or {\rm (A0)-(A5) and (A7)}. For any $u_0 \in C(\cO)$,
if $u$ is the solution of {\rm (CN)} {\rm(}resp., {\rm (DBC)}{\rm)} associated to $u_0$, then 
there exists a solution 
$v \in \W(\Om)$ of {\rm (E1)} {\rm(}resp., {\rm (E2)}{\rm)}, 
such that 
\begin{equation*}
u(x,t) \to v(x) \quad\textrm{uniformly on} 
\ \cO 
\ \textrm{as} \ t\to\infty. 
\end{equation*}
\end{thm}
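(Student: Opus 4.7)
The plan is to follow the paper's natural division: handle (DBC) by the PDE approach under (A6) and (CN) by the weak KAM approach under (A7). In both cases, by Proposition~\ref{prop:bound} I first subtract $-ct$ to normalize the ergodic constant to $c=0$, so that $u$ is bounded on $\cQ$; Theorem~\ref{thm:existence} gives a uniform Lipschitz bound when $u_0\in\W(\Om)$, and \eqref{eq:contsol} extends the argument to $u_0\in C(\cO)$ by density. I then introduce the half-relaxed limits
\[
\bar u(x):=\limsup_{t\to\infty,\,y\to x}u(y,t),\qquad \underline u(x):=\liminf_{t\to\infty,\,y\to x}u(y,t),
\]
which are finite since $u$ is bounded, and by the standard stability theory for viscosity solutions with Neumann-type boundary conditions (see \cite{CIL,B3}), $\bar u$ is a subsolution and $\underline u$ a supersolution of the appropriate ergodic problem with $a=0$. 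Since $\underline u\le\bar u$ trivially, the theorem reduces to showing equality; then $v:=\bar u=\underline u\in C(\cO)$ and uniform convergence follows from a Dini-type argument using the uniform Lipschitz bound on $u$.

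For the PDE approach to (DBC), I would adapt the asymptotic monotonicity scheme of \cite{BS,BM} to the dynamical boundary condition. Given a subsolution $\psi$ of (E2), I use (A6)$_\pm$ together with the convexity (A5) to convert it into a family of ``strict'' subsolutions $\psi_\mu$ ($\mu\in(0,1)$) on the set where $\psi$ fails to solve the ergodic problem, producing a strict gap of order $\psi_\eta(1-\mu)$ both in $\Om$ and on $\bO$. The strictness, combined with the comparison principle (Theorem~\ref{thm:comparison}) applied after a rescaling in time, forces asymptotic monotonicity of a functional of the form $\sup_{\cO}\bigl(\mu u(\cdot,t/\mu)-\psi(\cdot)\bigr)$. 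Sending $\mu\to 1^-$ and running the analogous argument for the lower relaxed limit identifies $\bar u$ and $\underline u$ with a common solution of (E2).

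For the weak KAM approach to (CN), I would first establish a Lax-Oleinik variational formula
\[
u(x,t)=\inf\Bigl\{u_0(\gamma(0))+\int_0^t L(\gamma(s),\dot\gamma(s))\,ds+(\text{boundary cost})\Bigr\},
\]
where the infimum is taken over absolutely continuous curves $\gamma:[0,t]\to\cO$ ending at $x$, $L$ is the Legendre dual of $H$, and the boundary cost encodes $B$ via its own Legendre transform along the inward cone of reflection. The convexity (A7) and hypotheses (A3)--(A5) guarantee existence of minimizers by a Tonelli-type argument. I would then define a projected Aubry set $\cA\subset\cO$ as the set of ``nontrapped'' endpoints of calibrated curves, use (A7)$_\pm$ to prove pointwise convergence of $u(\cdot,t)$ to an ergodic solution $v$ on $\cA$, and propagate this convergence to all of $\cO$ by reading off the variational formula along near-minimizing trajectories emanating from $\cA$.

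The main obstacle in both approaches is the nonlinear boundary condition. In the PDE approach, one needs the strict-subsolution improvement to propagate simultaneously in $\Om$ and on $\bO$, which is precisely the role of (A5), and one must verify that the comparison-based monotonicity survives the generalized (viscosity) interpretation of $B(x,Du)=0$. In the weak KAM approach, the delicate point is defining reflected Lagrangian dynamics consistent with the viscosity meaning of the boundary condition and establishing enough one-sided regularity of $u(\cdot,t)$ near $\bO$ for minimizers to be controlled up to the boundary; once this structural groundwork is laid, the convergence machinery of \cite{F2,DS} can be transposed to the bounded-domain reflected setting.
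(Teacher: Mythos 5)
Your high-level division (PDE approach for (DBC) under (A6), weak KAM approach for (CN) under (A7)) matches the paper, and the normalization $c=0$ plus the reduction to Lipschitz data via \eqref{eq:contsol} is fine; but both halves have genuine gaps exactly where the work lies. In the PDE half, reducing the theorem to ``$\bar u=\underline u$'' for the half-relaxed limits is not a reduction: the ergodic problems (E1)/(E2) admit no comparison principle (solutions are non-unique even up to additive constants), so the sub/supersolution properties of $\bar u$ and $\underline u$ by themselves yield nothing --- this non-uniqueness is precisely the obstruction the paper highlights. The paper's actual mechanism (Theorem \ref{thm:asymp-mono} via Lemma \ref{lem:main-lem}) is asymptotic monotonicity, proved for the ratio functions $\mu_\eta^\pm(x,s)$ built from a fixed ergodic solution $v$ normalized so that $1\le u-v\le C$; one shows $\mu_\eta^+$ is a supersolution of an obstacle-type problem with a Neumann operator $F$ and concludes $\mu_\eta^\pm\to1$. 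Your substitute functional $\sup_{\overline\Omega}\bigl(\mu u(\cdot,t/\mu)-\psi\bigr)$ is not shown to be monotone, and the sketch does not explain how (A6) --- which compares $\mu H(x,p/\mu+q)$ with $H(x,p+q)$ only when $H(x,q)\le0$, i.e.\ when $q$ is a gradient of an ergodic (sub)solution --- would be brought to bear on it; moreover the hard technical step, handling the nonlinear boundary operator through the test functions of Lemmas \ref{lem:func-C} and \ref{lem:coercivity} and the convex splitting of $B$ (this is where (A5) actually enters), is absent. Finally, uniform convergence of $u$ is obtained in the paper not by a Dini argument but by the contraction \eqref{eq:contsol} along time shifts combined with monotonicity in $t$ of the limiting profile $u^\infty$.

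In the weak KAM half your outline is a different (Fathi/Davini--Siconolfi/Roquejoffre-style) route through a projected Aubry set, which could in principle work, but everything specific to the nonlinear Neumann condition is deferred rather than supplied: the correct variational formula is not an infimum over curves with an unspecified ``boundary cost'' but over Skorokhod triples $(\eta,v,l)$ with running cost $L(\eta,-v)+l\,G\bigl(\eta,l^{-1}(v-\dot\eta)\bigr)$, $G$ the convex dual of $B$ (Theorems \ref{i:skor} and \ref{i:visco-en}); existence of minimizers is not a Tonelli argument but a uniform-integrability/Dunford--Pettis argument resting on the Lipschitz bound of $u$ (Theorem \ref{i:exist-extremal-en}); and the key analytic input is the existence of a.e.\ covectors $p(t)$ along curves with $H(\eta(t),p(t))\le0$ and $B(\eta(t),p(t))\le0$ on the boundary (Theorem \ref{i:exist-p-n}), whose proof requires the delicate boundary sup-convolution of Lemma \ref{i:localized-main}. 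The paper then avoids the Aubry set entirely in the convergence proof: it uses the identity $u_\infty=\liminf_{t\to\infty}u$ and the reparametrization $\eta_\delta(s)=\eta((1+\delta)s)$ along an extremal curve, with (A7) supplying $L(y,(1+\delta)z)\le(1+\delta)L(y,z)+\delta\go_0(\delta)$ on the relevant set. Since your proposal explicitly leaves the reflected dynamics, the boundary regularity, and the propagation of convergence from the Aubry set as ``structural groundwork'' to be laid, it is an outline of a program rather than a proof of the theorem.
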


\begin{rem}
{\rm 
(i) Under the convexity assumption on $H$ of (A7), 
the assumptions (A1)--(A6) are equivalent to (A1)--(A5) and (A7), and therefore 
(A1)--(A5) and (A7) imply (A1)--(A6).   
Indeed, under (A1) and the convexity assumption on $H$,   
conditions (A7)$_\pm$ are equivalent to (A6)$_\pm$, respectively, 
This equivalence in the plus case has been proved in  
\cite[Appendix C]{II1}. The proof in the munis case is similar to that in the plus case, 
which we leave to the interested reader. 
(ii) We notice that if $H$ is smooth with respect to the $p$-variable, 
then (A6) is equivalent
to a \textit{one-sided directionally strict convexity} 
in a neighborhood of the level set $\{p\in\R^n\mid H(x,p)=0\}$ 
for all $x\in\T^n$, i.e., 
\begin{itemize}
\item[{\rm(A6')}]
{\it
there exists $\eta_{0}>0$ such that,  
for any $\eta\in(0,\eta_{0}]$, 
there exists $\psi_{\eta}>0$ such that 
if $H(x,p+q)\ge\eta$ and $H(x,q)\le0$ 
{\rm(}or if $H(x,p+q)\le-\eta$ and $H(x,q)\le0${\rm)} 
for some $x\in\T^{n}$ and $p,q\in\R^{n}$, then for any $\mu\in(0,1]$, 
\[
D_{p}H(x,p+q)\cdot p-H(x,p+q)\ge\psi_{\eta}. 
\]
}
\end{itemize}
(iii) 
Let us take the Hamiltonian $H(x,p):=(|p|^{2}-1)^{2}$ for instance. 
If we consider the homogeneous Neumann condition, then we can 
easily see that the additive eigenvalue is $1$. 
This Hamiltonian is not convex but satisfies (A6) (and (A6')). 
}
\end{rem}

\section{Asymptotic Behavior I : the PDE approach}\label{PDE}

As we mentioned it in the introduction, we provide the proof of Theorem \ref{thm:large-time} only in the case of the nonlinear dynamical-type boundary value problem (DBC). In the case of the nonlinear Neumann-type boundary condition, 
the proof is simpler 
and we will only give a remark at the end of this section. 

In order to avoid technical difficulties, we assume that $u_0$ is Lipschitz continuous (and therefore the solution $u$ of (DBC) is Lipschitz continuous 
on $\cQ$). 
We can easily remove it by using \eqref{eq:contsol}. 

As in \cite{BS, BM} 
the \textit{asymptotic monotonicity} of solutions of (DBC) is 
a key property to get convergence \eqref{conv}. 
\begin{thm}[Asymptotic Monotonicity]\label{thm:asymp-mono} 
\ \\
{\rm (i)} 
{\bf (Asymptotically Increasing Property)} 
Assume that {\rm (A6)$_{+}$} holds. 
For any $\eta\in(0,\eta_{0}]$, there exists 
$\del_{\eta}: [0,\infty)\to[0,\infty)$ such that 
$\lim_{s\to \infty}\del_{\eta}(s)\to0$ and
$$ u(x,s)-u(x,t)+\eta(s-t)\le \del_{\eta}(s)$$
for all $x\in\cO$, $s,t\in[0,\infty)$ with $t\ge s$. \\
{\rm (ii)} {\bf (Asymptotically Decreasing Property)} 
Assume that {\rm (A6)$_{-}$} holds. 
For any $\eta\in(0,\eta_{0}]$, there exists 
$\del_{\eta}: [0,\infty)\to[0,\infty)$ such that 
$\lim_{s\to \infty}\del_{\eta}(s)\to0$ and
$$u(x,t)-u(x,s)-\eta(t-s)\le \del_{\eta}(s)
$$
for all $x\in\cO$, $s,t\in[0,\infty)$ with $t\ge s$. 
\end{thm}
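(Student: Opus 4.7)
The plan is to prove part (i); part (ii) is symmetric, using (A6)$_-$ and the reversed direction. By Proposition~\ref{prop:bound} I may normalize so the ergodic constant is $c=0$, and by Theorem~\ref{thm:existence} together with \eqref{eq:contsol} it suffices to treat the case where $u_0$ is Lipschitz, so that $u$ is Lipschitz on $\cQ$. Following the PDE strategy of \cite{BS, BM}, I manufacture a strict subsolution of (DBC) from $u$ itself by rescaling time and mixing with a solution of the ergodic problem, then transfer this strictness into the asserted asymptotic monotonicity via the comparison Theorem~\ref{thm:comparison}.

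\textbf{Construction of a rescaled subsolution.} By Theorem~\ref{thm:additive}, fix $\underline u\in W^{1,\infty}(\Om)$ solving (E2) with $a=0$, so that $H(x,D\underline u)\le 0$ in $\Om$ and $B(x,D\underline u)\le 0$ on $\bO$ in the viscosity sense. For $\mu\in(0,1)$ close to $1$ I set
\[
\phi^\mu(x,t):=\mu\,u\!\left(x,\tfrac{t}{\mu}\right)+(1-\mu)\,\underline u(x),
\]
so that formally $\phi^\mu_t=u_t(\cdot,t/\mu)$ and $D\phi^\mu=\mu Du(\cdot,t/\mu)+(1-\mu)D\underline u$. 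Applying (A6)$_+$ with $p=\mu(Du(\cdot,t/\mu)-D\underline u)$ and $q=D\underline u$ (so $p+q=D\phi^\mu$ and $p/\mu+q=Du(\cdot,t/\mu)$) together with the viscosity identity $u_t(\cdot,t/\mu)=-H(\cdot,Du(\cdot,t/\mu))$, I expect to prove
\[
\mu\,\phi^\mu_t+H(x,D\phi^\mu)\le -\psi_\eta(1-\mu)\quad\text{whenever } H(x,D\phi^\mu)\ge\eta
\]
in $\Q$. The boundary piece will use the convexity (A5) of $B$ together with the substitution of the viscosity identity $u_t+B(\cdot,Du)=0$ on $\bQ$ to absorb the residue $(1-\mu)[B(\cdot,Du)-B(\cdot,D\underline u)]$ into $\mu\,\phi^\mu_t$, yielding $\mu\,\phi^\mu_t+B(x,D\phi^\mu)\le 0$ on $\bQ$.

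\textbf{Contradiction via compactness and comparison.} Suppose (i) fails: there exist $\alpha>0$, $s_n\to\infty$, $\tau_n\ge 0$ and $x_n\in\cO$ with $u(x_n,s_n)-u(x_n,s_n+\tau_n)-\eta\tau_n\ge\alpha$. Boundedness of $u$ forces $\tau_n\le 2\|u\|_{L^\infty(\cQ)}/\eta$, so along a subsequence $\tau_n\to\tau_*$ and $x_n\to x_*$. Uniform Lipschitz estimates (Theorem~\ref{thm:existence}) combined with Arzel\`a--Ascoli yield a bounded global solution $u_*$ of (DBC) on $\cO\times\R$, obtained as the locally uniform limit of $u_n(x,t):=u(x,t+s_n)$, satisfying $u_*(x_*,0)-u_*(x_*,\tau_*)-\eta\tau_*\ge\alpha>0$. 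Applying the $\phi^\mu$-construction to $u_*$ and comparing $\phi^\mu$ with the time-translate $v(x,t):=u_*(x,t+\tau_*)+\eta\tau_*$ of $u_*$ via Theorem~\ref{thm:comparison}, the strict gain $\psi_\eta(1-\mu)>0$ at an interior maximum of $\phi^\mu-v-K(1-\mu)$ (with $K$ chosen to align the initial traces) gives the contradiction after sending $\mu\uparrow 1$. The modulus $\delta_\eta$ with $\delta_\eta(s)\to 0$ is then extracted a posteriori from this contradiction scheme.

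\textbf{Main obstacle.} I expect the hardest step to be the verification of the (near-)subsolution property at boundary points. Since the dynamical boundary condition is itself a Hamilton--Jacobi-type equation on $\bQ$ and there is no analogue of (A6)$_+$ for $B$, the boundary inequality must come solely from the convexity (A5) of $B$ combined with the identity $u_t+B(\cdot,Du)=0$; the sign of the residue $(1-\mu)[B(\cdot,Du)-B(\cdot,D\underline u)]$ is not controlled a priori, and the trick is to substitute the boundary equation to convert it into a time-derivative term. A secondary subtlety is that the quantitative gain $\psi_\eta(1-\mu)$ vanishes with $1-\mu$, so the $\mu\uparrow 1$ limit must be balanced carefully against the error terms arising in the comparison/doubling-of-variables step near $\bQ$, so that the strictness survives in the limit.
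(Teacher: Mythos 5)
Your plan has a genuine structural gap at its core. The rescaling/convex-combination trick only yields information where the Hamiltonian is above the level $\eta$: applying (A6)$_{+}$ with $p=\mu(Du-D\underline u)$, $q=D\underline u$ gives $\mu\,\phi^\mu_t+H(x,D\phi^\mu)\le-\psi_\eta(1-\mu)$ \emph{only when} $H(x,D\phi^\mu)\ge\eta$ (and, on the boundary, only after the convexity argument for $B$). In the region where $H(x,D\phi^\mu)<\eta$ you have no subsolution property at all, because $H$ is not assumed convex, so the convex combination $\mu u(\cdot,\cdot/\mu)+(1-\mu)\underline u$ need not be a subsolution of (DBC) there. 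Consequently Theorem \ref{thm:comparison} cannot be applied to $\phi^\mu$ against the time-translate $v$, and the "strict gain at an interior maximum" step collapses: at the maximum point there is no reason why $H(x,D\phi^\mu)\ge\eta$ should hold, and without it you have no inequality to exploit. This either/or structure is precisely why the statement carries the correction term $\eta(t-s)$ and the error $\delta_\eta(s)$, and it is what the paper's proof encodes differently: instead of a rescaled subsolution, the paper introduces the ratio $\mu^{\pm}_\eta(x,s)$ built from $u$ and a solution $v$ of (E2) (normalized so that $1\le u-v\le C$) and proves in Lemma \ref{lem:main-lem} that it is a super/subsolution of an \emph{obstacle-type} variational inequality of the form $\max\{w-1,\,w_t+M|Dw|+\tfrac{\psi_\eta}{C}(w-1)\}=0$ (with a Neumann-type condition involving $F$), where the obstacle $w-1\le 0$ absorbs exactly the bad region $H<\eta$; comparison for the associated stationary obstacle problem then forces $\mu^{\pm}_\eta(\cdot,s)\to1$, which is the asymptotic monotonicity. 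Some device of this kind is missing from your argument, and without it the contradiction scheme does not close.

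A secondary, but substantial, omission is the viscosity-level rigor. Both $u$ and $\underline u$ are merely Lipschitz, the boundary conditions hold only in the viscosity sense (at a boundary point the equation may hold instead of the boundary inequality), and making your formal computations rigorous requires a doubling-of-variables argument with the special boundary test functions of Lemmas \ref{lem:func-C} and \ref{lem:coercivity}, together with a case analysis according to whether the various doubled points lie on $\bO$; this is the technical heart of the paper's proof of Lemma \ref{lem:main-lem}, and your proposal acknowledges the difficulty but does not supply the mechanism. Your formal boundary computation (convexity of $B$ plus $B(x,D\underline u)\le0$ plus substitution of $u_t+B(x,Du)=0$) is the right heuristic, and the a posteriori extraction of $\delta_\eta$ from a compactness argument would be fine, but neither repairs the main gap above.
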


The {\bf proof of Theorem~\ref{thm:large-time}} follows as in \cite{BS,BM}: 
we reproduce these arguments for the convenience of the reader.

Since $\{u(\cdot,t)\}_{t\ge0}$ is compact in $\W(\Om)$, 
there exists a sequence $\{u(\cdot,T_{n})\}_{n\in\N}$ which 
converges uniformly on $\cO$ as $n\to\infty$. 
Theorem~\ref{thm:comparison} implies that we have 
\[
\|u(\cdot,T_{n}+\cdot)-u(\cdot,T_{m}+\cdot)\|_{L^{\infty}(\Q)}
\le 
\|u(\cdot,T_{n})-u(\cdot,T_{m})\|_{L^{\infty}(\Om)} 
\]
for any $n,m\in\N$. 
Therefore, $\{u(\cdot,T_{n}+\cdot)\}_{n\in\N}$ 
is a Cauchy sequence in $C(\cQ)$ and it converges to a function denoted by 
$u^{\infty}\in C(\cQ)$.

Fix any $x\in\cO$ and $s,t\in[0,\infty)$ with $t\ge s$.  
By Theorem \ref{thm:asymp-mono} we have 
\[
u(x,s+T_{n})-u(x,t+T_{n})+\eta(s-t)
\le \del_{\eta}(s+T_{n}) 
\]
or 
\[
u(x,t+T_{n})-u(x,s+T_{n})-\eta(t-s)
\le \del_{\eta}(s+T_{n}) 
\]
for any $n\in\N$ and $\eta>0$. 
Sending $n\to\infty$ and then $\eta\to0$, we get, for any $t\ge s$
\[
u^{\infty}(x,s)\le u^{\infty}(x,t). 
\]
or 
\[
u^{\infty}(x,t)\le u^{\infty}(x,s). 
\]
Therefore, we see that the functions $x\mapsto u^{\infty}(x,t)$ are uniformly bounded and equi-continuous, and they are also monotone in $t$. This implies that $u^{\infty}(x,t)\to w(x)$ uniformly on $\cO$ as $t\to\infty$
for some $w\in \W(\Om)$. Moreover, by a standard stability property of viscosity solutions, 
$w$ is a solution of (DBC).

Since $u(\cdot,T_{n}+\cdot)\to u^{\infty}$ uniformly in $\cQ$ 
as $n\to\infty$, we have 
\[
-o_n(1)+u^{\infty}(x,t)
\le 
u(x,T_{n}+t)
\le 
u^{\infty}(x,t)+o_n(1), 
\]
where $o_n(1)\to\infty$ as $n\to\infty$, uniformly in $x$ and $t$.
Taking the half-relaxed semi-limits as $t \to +\infty$, 
we get 
\[
-o_n(1)+w(x)
\le 
\limiinf_{t\to\infty} u (x,t)
\le 
\limssup_{t\to\infty} u (x,t)
\le 
w(x)+o_n(1). 
\]
Sending $n\to\infty$ yields 
\[
w(x)
=
\limiinf_{t\to\infty} u (x,t)
=
\limssup_{t\to\infty} u (x,t)
\]
for all $x\in\cO$. 
And the proof of Theorem~\ref{thm:large-time} is complete.
\medskip

Now we turn to the {\bf proof of Theorem \ref{thm:asymp-mono}}.

Noticing that the additive eigenvalue is $0$ again, 
by Proposition \ref{prop:bound} the solution $u$ of (DBC) 
is bounded on $\cQ$. We consider any solution $v$ of (E2). We notice that $v-M$ is still a solution of (E2) for any constant $M>0$. 
Therefore subtracting a positive constant to $v$ if necessary, 
we may assume that 
\begin{equation}\label{bdd:u-v}
1\le u(x,t)-v(x)\le C 
\quad\textrm{for all} \ (x,t)\in\cQ \ \textrm{and some} \ 
C>0 
\end{equation}
and we fix such a constant $C$. 

We define the functions $\mu_{\eta}^{\pm}:\cQ\to\R$ by 
\begin{align}
&
\mu_{\eta}^{+}(x,s):=\min_{t\ge s}
\Bigr(\frac{u(x,t)-v(x)+\eta(t-s)}{u(x,s)-v(x)}\Bigr), 
\label{func:mu-sl-plus}\\
&
\mu_{\eta}^{-}(x,s):=\max_{t\ge s}
\Bigr(\frac{u(x,t)-v(x)-\eta(t-s)}{u(x,s)-v(x)}\Bigr)
\nonumber
\end{align}
for $\eta\in(0,\eta_{0}]$. 
By the uniform continuity of $u$ and $v$, 
we have 
$\mu_{\eta}^{\pm}\in C(\cQ)$.  
It is easily seen that
$0\le\mu_{\eta}^{+}(x,s)\le 1$ and 
$\mu_{\eta}^{-}(x,s)\ge 1$ 
for all $(x,s)\in \cQ$ 
and $\eta\in(0,\eta_{0}]$.

\begin{lem}\label{lem:main-lem}
{\rm (i)} 
Assume that {\rm (A6)$_{+}$} holds. 
The function $\mu_{\eta}^{+}$ is a supersolution of  
\begin{equation}\label{variational-ineq}
\left\{
\begin{aligned}
&
\max\{w-1, 
w_{t}+M|Dw|\\
&\hspace*{2cm}+\frac{\psi_\eta}{C}(w-1)\}
=0
&& \textrm{in} \ \Q, \\
&
\max\{w-1, 
w_{t}+F(x,Dw)\}=0
&& \textrm{on} \ \bQ 
\end{aligned}
\right.
\end{equation}
for any $\eta\in(0,\eta_0]$ and some $M>0$, 
where 
\begin{equation}\label{func:F}
F(x,p):=
-K(-p\cdot n(x))+M_{B}|p-(p\cdot n(x))n(x)|, 
\end{equation}
and 
\begin{equation}\label{func:K}
K(r):=
\left\{
\begin{aligned}
&
\theta r 
&& \textrm{if} \ r\ge0, \\
&
M_{B}r
&& \textrm{if} \ r<0.  
\end{aligned}
\right.
\end{equation}
{\rm (ii)} 
Assume that {\rm (A6)$_{-}$} holds. 
The function $\mu_{\eta}^{-}$ is a subsolution of  
\begin{equation*}
\left\{
\begin{aligned}
&
\min\{w-1, 
w_{t}-M|Dw|\\
&\hspace*{2cm}+\frac{\psi_\eta}{C}\cdot\frac{w-1}{w}\}
=0
&& \textrm{in} \ \Q, \\
&
\min\{w-1, 
w_{t}-F(x,-Dw)\}=0
&& \textrm{on} \ \bQ 
\end{aligned}
\right.
\end{equation*}
for any $\eta\in(0,\eta_0]$ and some $M>0$. 
\end{lem}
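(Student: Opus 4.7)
I outline the proof of (i); part (ii) follows by the dual argument using (A6)$_-$ in place of (A6)$_+$.

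\smallskip

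\emph{Step 1 (Reduction and subtangent inequality).} Taking $t=s$ in \eqref{func:mu-sl-plus} gives $\mu_\eta^+\le 1$ on $\cQ$, so the obstacle $w-1\le 0$ is automatically compatible with the supersolution requirement $\max\ge 0$. Let $\varphi\in C^1$ with $\mu_\eta^+-\varphi$ attaining a local minimum equal to $0$ at $(\hat x,\hat s)$. If $\mu_\eta^+(\hat x,\hat s)=1$ then $\varphi-1=0$ and the max is trivially $\ge 0$; so assume $\mu_\eta^+(\hat x,\hat s)<1$. By continuity of $u$, $v$ and by \eqref{bdd:u-v}, the infimum in \eqref{func:mu-sl-plus} is attained at some $\hat t>\hat s$. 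Combining $\varphi\le\mu_\eta^+$ with the definition of $\mu_\eta^+$ yields the fundamental subtangent inequality
\begin{equation}\label{plan:subtang}
u(x,t)\ge \varphi(x,s)(u(x,s)-v(x))+v(x)-\eta(t-s),
\end{equation}
valid for $(x,s)$ near $(\hat x,\hat s)$ and all $t\ge s$, with equality at $(x,s,t)=(\hat x,\hat s,\hat t)$.

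\smallskip

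\emph{Step 2 (Formal interior derivation).} Suppose momentarily that $u$, $v$ are $C^1$ and $\hat x\in\Om$. At the equality point, optimality of \eqref{plan:subtang} in $t$ gives $u_t(\hat x,\hat t)=-\eta$, matching gradients in $x$ gives $Du(\hat x,\hat t)=D\varphi\cdot U+\varphi Du(\hat x,\hat s)+(1-\varphi)Dv(\hat x)$, and matching in $s$ gives $\varphi_s U+\varphi u_s(\hat x,\hat s)=-\eta$, where $U:=u(\hat x,\hat s)-v(\hat x)\in[1,C]$ and $\varphi,D\varphi,\varphi_s$ are evaluated at $(\hat x,\hat s)$. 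The HJ equations at the two points and (E2) give $H(\hat x,Du(\hat x,\hat t))=\eta$, $H(\hat x,Du(\hat x,\hat s))=-u_s$, and $H(\hat x,Dv)=0$. I then apply (A6)$_+$ with $\mu=\varphi$, $p=\varphi(Du(\hat x,\hat s)-Dv)$, $q=Dv$ (so $p/\mu+q=Du(\hat x,\hat s)$ and $p+q=\varphi Du(\hat x,\hat s)+(1-\varphi)Dv$), absorb the residual gradient $D\varphi\cdot U$ through (A2) (which fixes the constant $M=M_R$ for a suitable Lipschitz radius $R$), and rewrite $\varphi u_s=-\eta-\varphi_s U$. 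This produces
\[
\varphi_s(\hat x,\hat s)+M|D\varphi(\hat x,\hat s)|+\frac{\psi_\eta}{C}(\varphi(\hat x,\hat s)-1)\ge 0,
\]
i.e. the required interior supersolution inequality.

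\smallskip

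\emph{Step 3 (Boundary case).} When $\hat x\in\bO$, the HJ equation on $\bQ$ gives $B(\hat x,Du(\hat x,\hat t))=\eta$, $B(\hat x,Du(\hat x,\hat s))=-u_s$, and $B(\hat x,Dv)=0$. Convexity (A5) then yields $B(\hat x,\varphi Du(\hat x,\hat s)+(1-\varphi)Dv)\le -\varphi u_s$. Decomposing $D\varphi\cdot U$ into its normal and tangential parts with respect to $n(\hat x)$, the oblique condition (A3) gives the sharp upper bound $\theta$ on the inward-pointing normal increment of $B$ and $M_B$ on the outward-pointing one (this asymmetry being exactly the function $K$ of \eqref{func:K}), while (A4) controls the tangential part, so that $B(\hat x,p+D\varphi\cdot U)-B(\hat x,p)\le U\cdot F(\hat x,D\varphi)$ with $F$ as in \eqref{func:F}. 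Combining with the convexity bound above yields $\varphi_s(\hat x,\hat s)+F(\hat x,D\varphi(\hat x,\hat s))\ge 0$, as required.

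\smallskip

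\emph{Step 4 (Lipschitz regularity: the main obstacle).} Steps 2--3 are formal because $u$ and $v$ are only Lipschitz and are merely viscosity solutions. To make them rigorous I will employ a Crandall--Ishii doubling of variables, introducing an auxiliary functional of the form
\[
\Psi_\alpha(x,y,z,s,t):=u(x,t)-\varphi(z,s)(u(y,s)-v(y))-v(y)+\eta(t-s)+\frac{|x-y|^2+|y-z|^2}{\alpha}+\beta(|s-\hat s|^2+|t-\hat t|^2),
\]
locating its local minimizer on $\cO\times\cO\times\cO\times[0,\infty)^2$, applying the supersolution property of $u$ at the $(x,t)$-slot and the subsolution property at the $(y,s)$-slot, combining the two viscosity inequalities through (A6)$_+$--(A2) in the interior or (A3)--(A5) on the boundary in parallel to Steps 2--3, and passing to the limit $\alpha\to 0$. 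The chief difficulty lies in this rigorous implementation: controlling the location of the minimizers, ensuring that each slot activates the appropriate interior-versus-boundary viscosity inequality, and tracking the error terms through the limit so that the identities derived formally in Steps 2--3 survive.
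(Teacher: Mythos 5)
Your Steps 1--3 reproduce, at a purely formal level, exactly the algebra that drives the paper's proof (the same application of (A6)$_+$ with $\mu=\varphi$ after splitting the gradient, and the same use of (A3)--(A5) through the asymmetric function $K$ and the positively homogeneous $F$), so the heuristic picture is right. But the whole content of the lemma is the rigorous implementation for merely Lipschitz viscosity solutions with boundary conditions holding only in the viscosity (``either/or'') sense, and that is precisely what you leave unproved in Step 4 --- you even flag it yourself as ``the chief difficulty''. Moreover, the plan you sketch there would not work as written, for two concrete reasons. First, your functional $\Psi_\alpha$ evaluates $u(y,s)$ and $v(y)$ at the \emph{same} spatial variable $y$: since both functions are only Lipschitz, you cannot extract from one slot two separate viscosity inequalities, and in particular you never obtain the inequality $H(\cdot,Dv)\le 0$ for $v$ at its own point, which is exactly the hypothesis ``$H(x,q)\le 0$'' needed to invoke (A6)$_+$. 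The paper avoids this by giving $v$ its own variable, i.e.\ by working with the quotient $\bigl(u(x,t)-v(z)+\eta(t-s)\bigr)/\bigl(u(y,s)-v(z)\bigr)$ in three spatial variables $x,y,z$, which in the limit produces the three inequalities $H(\xi,\mu P+(1-\mu)Q)\ge\eta$, $H(\xi,P-\tfrac{\mu_1}{\mu}D\phi)\le\tfrac{1}{\mu}(\eta+\mu_1\phi_s)$ and $H(\xi,Q)\le 0$.

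Second, plain quadratic penalizations $|x-y|^2/\alpha$ cannot handle the nonlinear Neumann/dynamic boundary condition: when a doubled minimum point lands on $\pl\Om$, the viscosity property only gives the alternative ``PDE inequality or $B$-inequality'', and with a quadratic test function you have no way to exclude the $B$-alternative at the various slots, so the case analysis never closes. The paper's proof hinges on the boundary-adapted test functions $\chi_1,\chi_2$ built from $C^{\xi,\del}_{a,b}$ (Lemmas \ref{lem:func-C} and \ref{lem:coercivity}), combined with the $\al\rho$ localization terms, precisely so that $u_t+B<0$ at the $(x,t)$-slot, $u_s+B>0$ at the $(y,s)$-slot and $B>0$ at the $z$-slot for $\ep,\del$ small compared with $\al$; note also that the hypothesis $\phi_t(\xi,\sig)+F(\xi,D\phi(\xi,\sig))<0$, assumed because the boundary alternative for the limit problem fails, is what kills the $B$-alternative at the $(y,s)$-slot --- a mechanism absent from your Step 3, which instead pretends all three boundary conditions hold as equalities. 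A smaller point: in Step 2 your choice $p=\varphi\bigl(Du(\hat x,\hat s)-Dv\bigr)$ puts the residual $D\varphi\cdot U$ on the level-$\eta$ side, so the hypothesis $H(x,p+q)\ge\eta$ of (A6)$_+$ is only known up to $-MU|D\varphi|$ and may fail; one should instead take $p=Du(\hat x,\hat t)-Dv$ (or, as in the paper, arrange the doubling so that the test-function gradient sits in the subsolution slot) and absorb $D\varphi$ by (A2) only \emph{after} applying (A6)$_+$. As it stands, the proposal is an outline of the known formal computation with the genuinely difficult (and new) part of the argument missing.
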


Before proving Lemma \ref{lem:main-lem} 
we notice that Lemma \ref{lem:main-lem} implies 
\begin{align*}
&\mu_{\eta}^{+}(\cdot,s)\to 1 \ \textrm{uniformly on} \ \cO, \\
&\mu_{\eta}^{-}(\cdot,s)\to 1 \ \textrm{uniformly on} \ \cO 
\end{align*}
as $s\to\infty$. 
Indeed noting that 
$r\mapsto F(x,p+r n(x))-\min\{\theta,M_B\}r$ 
is nondecreasing and the function $(r-1)/r$ is increasing for $r>0$, 
we see that the comparison principle holds for both of Neumann problems 
\begin{equation}\label{main-lem:sl-st}
\left\{
\begin{aligned}
&
\max\{w-1, 
M|Dw|
+\frac{\psi_\eta}{C}(w-1)\}
=0
&& \textrm{in} \ \Om, \\
&
\max\{w-1, F(x,Dw)\}=0
&& \textrm{on} \ \bO,  
\end{aligned}
\right.
\end{equation}
and 
\begin{equation}\label{main-lem:sl-st2}
\left\{
\begin{aligned}
&
\min\{w-1, 
-M|Dw|+\frac{\psi_\eta}{C}\cdot\frac{w-1}{w}\}
=0
&& \textrm{in} \ \Om, \\
&
\min\{w-1, -F(x,-Dw)\}=0
&& \textrm{on} \ \bO.  
\end{aligned}
\right.
\end{equation} 
Moreover we have 
$\displaystyle \limiinf_{s\to\infty}\mu_{\eta}^{+}$, 
$1$ are solutions of \eqref{main-lem:sl-st}
and 
$\displaystyle \limssup_{s\to\infty}\mu_{\eta}^{-}$, $1$ 
are a subsolution and a solution of \eqref{main-lem:sl-st2}, 
respectively. 
Therefore from these observations 
we see $\displaystyle \limiinf_{s\to\infty}\mu_{\eta}^{+}=1$ and 
$\displaystyle \limssup_{s\to\infty}\mu_{\eta}^{-}=1$, 
which imply the conclusion.

\begin{proof}[Proof of Lemma {\rm \ref{lem:main-lem}}]
We only prove (i), since we can prove (ii) similarly. 
Fix $\eta\in(0,\eta_{0}]$ 
and let $\mu_{\eta}^{+}$ be the function given 
by \eqref{func:mu-sl-plus}. 
We recall that $\mu_{\eta}^{+}(x,s)\leq 1$ for any $x\in \cO$, $s\geq 0$.

Let $\phi\in C^{1}(\cQ)$ and $(\xi,\sig)\in\cO\times(0,\infty)$ 
be a strict local minimum of $\mu_{\eta}^{+} -\phi$, 
i.e., 
$\mu_{\eta}^{+}(x,s)-\phi (x,s)>\mu_{\eta}^{+}(\xi,\sig)-\phi (\xi,\sig)$ 
for all $(x,s)\in\cO\times[\sig-r,\sig+r]\setminus\{(\xi,\sig)\}$ 
and some small $r>0$. 
Since we can get the conclusion by the same argument 
as in \cite{BS} in the case where $\xi\in\Om$, 
we only consider the case where $\xi\in\bO$ in this proof. 
Moreover since there is nothing to check 
in the case where $\mu_{\eta}^{+}(\xi, \sig)=1$ or 
$\phi_{t}(\xi,\sig)+F(\xi,D\phi(\xi,\sig))\ge0$,  
we assume that 
\begin{equation}\label{pf:main-lem-eq}
\mu_{\eta}^{+}(\xi,\sig)<1 \ \textrm{and} \ 
\phi_{t}(\xi,\sig)+F(\xi,D\phi(\xi,\sig))<0. 
\end{equation}
We choose $\tau\ge\sig$ 
such that 
\[
\mu_{\eta}^{+}(\xi, \sig)=
\frac{u(\xi,\tau)-v(\xi)+\eta(\tau-\sig)}
{u(\xi,\sig)-v(\xi)}=:\frac{\mu_{2}}{\mu_{1}}. 
\]
We write $\mu$ for $\mu_{\eta}^{+}(\xi,\sig)$ henceforth.

Next, for $\al >0$ small enough, we consider the function 
\[
(x,t,s)\mapsto 
\frac{u(x,t)-v(x)+\eta(t-s)}
{u(x,s)-v(x)}+|x-\xi|^{2} + (t-\tau)^{2} -\phi(x,s) + 3\al \rho(x), 
\]
where $\rho$ is the function which is defined just after (A0). 
We notice that, for $\al=0$, $(\xi,\tau, \sig)$ is a strict minimum point of this function. This implies that, for $\al>0$ small enough, this function achieves its minimum over $\cO\times\{(t,s)\mid t\ge s,\ s\in[\sig-r,\sig+r]\}$
at some point $(\xi_{\al}, t_{\al}, s_{\al})$ which converges to $(\xi,\tau, \sig)$ when $\al \to 0$. 
Then there are two cases~: either (i) $\xi_{\al}\in\Om$ or (ii) $\xi_{\al}\in\bO$. We only consider case (ii) here too since, again, the conclusion follows by the same argument as in \cite{BS} in case (i).
In case (ii), since $\rho (\xi_{\al})=0$, the $\al$-term vanishes 
and we have $(\xi_{\al}, t_{\al}, s_{\al}) =(\xi, \tau, \sig)$ 
by the strict minimum point property.

For any $\del\in(0,1)$ 
let $C_{1}^{\xi, \del}$ and $C_{2}^{\xi, \del}\in C^{1}(\R^{n})$ be, respectively, 
the functions given in Lemma \ref{lem:func-C} with 
$a=\mu_{1}/\mu, b=-\eta/\mu$ and $a=\frac{\mu_{1}}{1-\mu}, b=0$, 
and let $\chi_{1}$ and $\chi_{2}$ be, respectively, 
the functions given in Lemma \ref{lem:coercivity} with 
$C_{a,b}^{\xi, \del}=C_{1}^{\xi, \del}$ and $C_{2}^{\xi, \del}$ 
for $\ep>0$. 
We set $K:=\cO^{3}\times\{(t,s)\mid t\ge s, 
s\in [\sig-r,\sig+r]\}$. 
We define the function 
$\Psi:K\to\R$ by 
\begin{align*}
{}&\Psi(x,y,z,t,s)\\
:=&
\frac{u(x,t)-v(z)+\eta(t-s)}{u(y,s)-v(z)}-\phi(y,s)
+\chi_{1}(x-y)+\chi_{2}(x-z)\\
{}&
+|x-\xi|^{2}+(t-\tau)^{2}
-\al(\rho (x)+\rho (y)+\rho (z)).  
\end{align*}
In view of Lemma \ref{lem:coercivity}, 
if $A\ge M_2$, where $A$ is the constant in $\chi_1, \chi_2$, 
then $\Psi$ achieves its minimum over 
$K$ at some point $(\ol{x}, \ol{y}, \ol{z}, \ol{t}, \ol{s})$ 
which depends on $\al,\del,\ep$.
By taking a subsequence if necessary 
we may assume that 
\[\ol{x}, \ol{y}, \ol{z}\to \xi\ 
\textrm{and} \ 
\ol{t} \to \tau, \ 
\ol{s}\to\sig \ 
\textrm{as} \ \ep\to0. 
\]

Set
\begin{align*}
&\ol{\mu}_{1}
:=u(\ol{y},\ol{s})-v(\ol{z}), \ 
\ol{\mu}_{2}:=
u(\ol{x},\ol{t})-v(\ol{z})+\eta(\ol{t}-\ol{s}), \ 
\ol{\mu}:=\frac{\ol{\mu}_{2}}{\ol{\mu}_{1}}, \\
&\ol{p}:=\frac{\ol{y}-\ol{x}}{\ep^{2}} \ 
\textrm{and} \ 
\ol{q}:=\frac{\ol{z}-\ol{x}}{\ep^{2}}, 
\end{align*}
and then we have
\[
\ol{\mu}_{1} \to \mu_{1},\; \ol{\mu}_{2} \to \mu_{2},\; \ol{\mu}\to\mu 
\ \textrm{as} \ \ep\to0. 
\]
Therefore 
we may assume that 
$\ol{\mu}<1$ for small $\ep>0$.

\textbf{Claim:} 
There exists a constant $M_{3}>0$ such that 
\[
|\ol{p}|+|\ol{q}|\le M_{3} 
\]
for all $\ep, \del, \al\in(0,1)$. 

We only consider the estimate of $|\ol{p}|$, 
since we can obtain the estimate of $|\ol{q}|$ similarly.  
The inequality 
$\Psi(\ol{x}, \ol{y}, \ol{z}, \ol{t}, \ol{s})
\le \Psi(\ol{x}, \ol{x}, \ol{z}, \ol{t}, \ol{s})$ 
implies 
\begin{align}
{}&
\chi_{1}(\ol{x}-\ol{y})
\nonumber\\
\le&\, 
L_{1}
\Bigl|\frac{1}{u(\ol{x}, \ol{s})-v(\ol{z})}
-\frac{1}{u(\ol{y}, \ol{s})-v(\ol{z})}\Bigr|
+\al|\rho(\ol{x})-\rho(\ol{y})|+|\phi(\ol{x},\ol{s})-\phi(\ol{y},\ol{s})|
\nonumber\\
\le&\, 
L_{2}|\ol{x}-\ol{y}| \label{pf:main-lam1:claim}
\end{align}
for some $L_{1},L_{2}>0$. 
Combining this \eqref{pf:main-lam1:claim} and 
the inequality in Lemma \ref{lem:coercivity} (i) 
we get the conclusion of Claim for $M_3:=4(M_{1}+L_{2})$. \medskip

In the sequel, we denote by $o_{\ep}(1)$ a quantity which tends to $0$ as $\ep \to 0$.
Derivating $\Psi$ with respect to each variable $t,s,x,y,z$ 
at $(\ol{x},\ol{y},\ol{z},\ol{t},\ol{s})$ formally,  we have
\begin{align*}
&u_{t}(\ol{x},\ol{t})= 
-\eta -2 \ol{\mu}_{1}(\ol{t}-\tau) = -\eta + o_{\ep}(1), \\
&u_{s}(\ol{y},\ol{s})= 
-\frac{1}{\ol{\mu}}(\eta+\ol{\mu}_{1}\phi_{s}(\ol{y},\ol{s})), \\
&\phantom{u_{s}(\ol{y},\ol{s})}= 
-\frac{1}{\mu}(\eta+\mu_{1}\phi_{s}(\xi,\sig)) +o_{\ep}(1), 
\\
&D_{x}u(\ol{x},\ol{t})
= 
\ol{\mu}_{1}\bigl\{
-D_{x}\chi_{1}(\ol{x}-\ol{y})
-D_{x}\chi_{2}(\ol{x}-\ol{z})
+2(\xi-\ol{x})-\al \tn(x)
\bigr\} \\
&\phantom{D_{x}u(\ol{x},\ol{t})}
= \mu_{1}\bigl\{
-D_{x}\chi_{1}(\ol{x}-\ol{y})
-D_{x}\chi_{2}(\ol{x}-\ol{z})
-\al \tn(\xi)\bigl\}
+o_{\ep}(1), \\
&D_{y}u(\ol{y},\ol{s})= 
\frac{\ol{\mu}_{1}}{\ol{\mu}}
\Bigl\{
D_{y}\chi_{1}(\ol{x}-\ol{y})
-D\phi(\ol{y},\ol{s})
+\al \tn (y)\Bigr\} \\
&\phantom{D_{y}u(\ol{y},\ol{s})}= 
\frac{\mu_{1}}{\mu}
\Bigl\{
D_{y}\chi_{1}(\ol{x}-\ol{y})
-D\phi(\xi,\sig)
+\al \tn(\xi)\Bigr\} +o_{\ep}(1), \\
&D_{z}v(\ol{z})=
\frac{\ol{\mu}_{1}}{1-\ol{\mu}}
\bigl\{
D_{z}\chi_{2}(\ol{x}-\ol{z})
+\al \tn (\ol{z})\bigr\}\\
&\phantom{D_{z}v(\ol{z})} =
\frac{\mu_{1}}{1-\mu}
\bigl\{
D_{z}\chi_{2}(\ol{x}-\ol{z})
+\al \tn(\xi)\bigr\} +o_{\ep}(1). 
\end{align*}
We remark that 
we should interpret $u_{t}, u_{s}, D_{x}u, D_{y}u$, and $D_{z}v$ 
as the viscosity solution sense here.

We first consider the case where $\ol{x}\in\bO$.
In view of Claim, (A3)--(A5) and Lemma \ref{lem:coercivity} (ii) 
we have 
\begin{align*}
{}&
u_{t}(\ol{x},\ol{t})+
B(\ol{x},D_{x}u(\ol{x},\ol{t}))\\
\le&\, 
-\eta + B\bigl(\ol{x}, -\mu_{1}\bigl(
D_{x}\chi_{1}(\ol{x}-\ol{y})+D_{x}\chi_{2}(\ol{x}-\ol{z})
\bigr)\bigr) -\theta\mu_{1}\al +o_{\ep}(1) \\
\le&\, 
\mu
\bigl\{-\frac{\eta}{\mu}+
B\bigl(\ol{x}, 
-\frac{\mu_{1}}{\mu}
D_{x}\chi_{1}(\ol{x}-\ol{y})\bigr)\bigl\}
+(1-\mu)
B\bigl(\ol{x}, 
\frac{-\mu_{1}}{1-\mu}
D_{x}\chi_{2}(\ol{x}-\ol{z})\bigr)\bigr\}\\
{}&\,
-\theta\mu_{1}\al +o_{\ep}(1)\\
\le&\, 
m(\del+o_{\ep}(1) )-\theta\mu_{1}\al +o_{\ep}(1), 
\end{align*}
where 
$m$ is a modulus. 
Therefore  
$u_t+B(\ol{x},D_{x}u(\ol{x},\ol{t}))<0$
for $\ep, \del>0$ which are small enough compared to $\al>0$. 
Similarly if $\ol{z}\in\bO$ then we have 
$
B(\ol{z},D_{z}v(\ol{z}))>0. 
$

We next consider the case where $\ol{y}\in\bO$. 
Note that  we have 
\[
B(x,p+q)\ge B(x,p)
+K(q\cdot n(x))-M_{B}|q_T|, 
\]
for any $x\in\bO$, $p,q\in B(0,M_3)$,
where $q_T := q-(q\cdot n(x))n(x)$ and $K$ is the function defined by \eqref{func:K}.  
By Lemma \ref{lem:coercivity} and the homogenity with degree $1$ of $F$ 
with respect to the $p$-variable, we have 
\begin{align*}
{}&
u_{s}(\ol{y},\ol{s})
+B(\ol{y},D_{y}u(\ol{y},\ol{s}))\\
\ge&\, 
-\frac{\mu_{1}}{\mu}\phi_{s}(\xi,\sig)
-\frac{\eta}{\mu}
+B\bigl(\ol{y}, 
\frac{\mu_{1}}{\mu}
D_{y}\chi_1(\ol{x}-\ol{y})\bigr)+\frac{\theta\mu_{1}\al}{\mu}
\\
{}&\,
+K\bigl(-\frac{\mu_{1}}{\mu}D\phi(\xi,\sig)
\cdot n(\xi)\bigr)
-M_{B}\Bigl|\Bigl(-\frac{\mu_{1}}{\mu}D\phi(\xi,\sig)\Bigr)_{T}
\Bigr|+o_{\ep}(1)\\
\ge&\, 
-m(\del+o_{\ep}(1))
+\frac{\theta\mu_{1}\al}{\mu}
-\frac{\mu_{1}}{\mu}
\bigl(\phi_{t}(\xi,\sig)+F(\xi,D\phi(\xi,\sig)))+o_{\ep}(1)\\
\ge&\, 
-m(\del+o_{\ep}(1))
+\frac{\theta\mu_{1}\al}{\mu}+o_{\ep}(1).
\end{align*}
Again, for $\ep, \del>0$ which are small enough compared to $\al>0$, we have 
\[
u_{s}(\ol{y},\ol{s})
+B(\ol{y},D_{y}u(\ol{y},\ol{s}))>0\; .
\]

Therefore, by the definition of viscosity solutions 
we have 
\begin{equation}\label{pf:main-lem:ineq-1}
\left\{
\begin{aligned}
-\eta + o_{\ep}(1)
+H(\ol{x}, D_{x}u(\ol{x},\ol{t}))\ge0, \\
-\frac{1}{\mu}(\eta+\mu_{1}\phi_s (\xi,\sig)) +o_{\ep}(1)
+H(\ol{y}, D_{y}u(\ol{y},\ol{s}))\le0, \\
H(\ol{z}, D_{z}v(\ol{z}))\le0. 
\end{aligned}
\right. 
\end{equation}

In view of the above claim by taking a subsequence if necessary 
we may assume that 
\begin{align*}
&
-\frac{\mu_{1}}{\mu}
D_{x}\chi_{1}(\ol{x}-\ol{y}), \ 
\frac{\mu_{1}}{\mu}
D_{y}\chi_{1}(\ol{x}-\ol{y})
\to P, \ \textrm{and}\\
&
-\frac{\mu_{1}}{1-\mu}
D_{x}\chi_{2}(\ol{x}-\ol{z}), \ 
\frac{\mu_{1}}{1-\mu}
D_{z}\chi_{2}(\ol{x}-\ol{z})
\to Q 
\end{align*}
as $\ep\to 0$ for some $P,Q\in\R^{n}$.

Sending $\ep\to0$, $\del\to0$ and then $\al\to0$ 
in \eqref{pf:main-lem:ineq-1},  
we obtain 
\begin{equation*}
\left\{
\begin{aligned}
H(\xi, \mu P+(1-\mu)Q)
\ge \eta, \\
-\frac{1}{\mu}(\eta+\mu_{1}\phi_{s}(\xi,\sig))
+H(\xi, P-\frac{\mu_{1}}{\mu}D\phi(\xi,\sig))\le0, \\
H(\xi, Q)\le0. 
\end{aligned}
\right. 
\end{equation*}

We use these three inequality in the following way : first, using (A2), the second one leads to
$$-\frac{1}{\mu}(\eta+\mu_{1}\phi_{s}(\xi,\sig))
+H(\xi, P) -M \frac{\mu_{1}}{\mu}|D\phi(\xi,\sig)|\le0 \; ,$$
for some constant $M>0$. It remains to estimate $H(\xi, P)$.

Set $\tilde{P}:=\mu(P-Q)$. 
By (A6)$_{+}$ we obtain 
\begin{align*}
&
H(\xi, \mu P+(1-\mu)Q)
=
H(\xi, \tilde{P}+Q)\\
&\le 
\mu H(\xi, \frac{\tilde{P}}{\mu}+Q)-\psi_{\eta}(1-\mu) 
=\mu H(\xi, P)
-\psi_{\eta}(1-\mu) 
\end{align*}
for some $\psi_{\eta}>0$. We therefore have 
$$ \frac{1}{\mu} (\eta + \psi_{\eta}(1-\mu)) \leq H(\xi, P)\; .$$ 

Using this estimate in our first inequality yields the desired result, namely
\[
\phi_{t}(\xi, \sig)+M|D\phi(\xi, \sig)|
+\frac{\psi_{\eta}}{C}(\mu-1)\ge0. 
\qedhere  
\]
\end{proof}

\begin{rem}
{\rm 
We remark that the solution of (CN) has 
the asymptotic monotonicity property. 
In order to prove this, we mainly use 
the following lemma in place of Lemma \ref{lem:main-lem}. 
}
\begin{lem}
Set 
\begin{align*}
&
\mu_{\eta}^{+}(s):=\min_{x\in\cO, t\ge s}
\Bigr(\frac{u(x,t)-v(x)+\eta(t-s)}{u(x,s)-v(x)}\Bigr), \\
&
\mu_{\eta}^{-}(s):=\max_{x\in\cO, t\ge s}
\Bigr(\frac{u(x,t)-v(x)-\eta(t-s)}{u(x,s)-v(x)}\Bigr)
\nonumber
\end{align*}
for $\eta\in(0,\eta_{0}]$. \\
{\rm (i)} 
Assume that {\rm (A6)$_{+}$} holds. 
The function $\mu_{\eta}^{+}$ is a supersolution of  
\[
\max\{w(s)-1, 
w^{'}(s)+\frac{{\psi_{\eta}}}{C}\cdot(w(s)-1)\}
=0 \ \textrm{in} \ (0,\infty) 
\]
for any $\eta\in(0,\eta_{0}]$. \\
{\rm (ii)} 
Assume that {\rm (A6)$_{-}$} holds. 
The function $\mu_{\eta}^{-}$ is a subsolution of  
\[
\min\{w(s)-1, 
w^{'}(s)
+\frac{\psi_{\eta}}{C}\cdot\frac{w(s)-1}{w(s)}\}
=0 \ \textrm{in} \ (0,\infty) 
\]
for any $\eta\in(0,\eta_{0}]$. 
\end{lem}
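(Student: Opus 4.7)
The plan is to prove (i) only; (ii) follows by a symmetric argument. Let $\phi \in C^{1}((0,\infty))$ and let $\sigma > 0$ be a strict local minimum of $\mu_{\eta}^{+} - \phi$. If $\mu := \mu_{\eta}^{+}(\sigma) \geq 1$ there is nothing to check, so assume $\mu < 1$ and pick $(\xi,\tau) \in \cO \times [\sigma,\infty)$ at which the minimum in the definition of $\mu_{\eta}^{+}(\sigma)$ is attained; set $\mu_{1} := u(\xi,\sigma) - v(\xi)$ and $\mu_{2} := u(\xi,\tau) - v(\xi) + \eta(\tau - \sigma)$, so that $\mu = \mu_{2}/\mu_{1}$ and $1 \leq \mu_{1} \leq C$ by \eqref{bdd:u-v}.

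I would then mimic the triple variable doubling used in the proof of Lemma~\ref{lem:main-lem}, replacing the spatial test function $\phi(y,s)$ of that proof by the $s$-only test function $\phi(s)$ and localizing the point $(x,t)$ near $(\xi,\tau)$ through the quadratic penalization $|x - \xi|^{2} + (t - \tau)^{2}$. Concretely, one minimizes
\begin{align*}
\Psi(x,y,z,t,s) := {}& \frac{u(x,t) - v(z) + \eta(t - s)}{u(y,s) - v(z)} - \phi(s) + \chi_{1}(x - y) + \chi_{2}(x - z) \\
{}& + |x - \xi|^{2} + (t - \tau)^{2} - \alpha(\rho(x) + \rho(y) + \rho(z))
\end{align*}
over $\cO^{3} \times \{(t,s) : t \geq s,\ s \in [\sigma - r, \sigma + r]\}$, with $\chi_{1}, \chi_{2}$ provided by Lemma~\ref{lem:coercivity} for the same choice of parameters $(\mu_{1}/\mu, -\eta/\mu)$ and $(\mu_{1}/(1 - \mu), 0)$ as in the proof of Lemma~\ref{lem:main-lem}. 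For $A$ large enough this minimum is attained at some $(\bar x, \bar y, \bar z, \bar t, \bar s)$ that converges to $(\xi, \xi, \xi, \tau, \sigma)$ as $\ep \to 0$, and the doubled ``gradients'' $(\bar y - \bar x)/\ep^{2}$ and $(\bar z - \bar x)/\ep^{2}$ stay bounded uniformly in $\ep, \delta, \alpha$ exactly as in the Claim within the proof of Lemma~\ref{lem:main-lem}.

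Next I would read off viscosity inequalities for $u$ at $(\bar x, \bar t)$, for $u$ at $(\bar y, \bar s)$ and for $v$ at $\bar z$. The key point is that for (CN) the boundary condition $B(x,Du) = 0$ is static, i.e., it carries no $u_{t}$: whenever one of $\bar x, \bar y, \bar z$ lies on $\bO$, the $-\alpha\rho$ penalization combined with (A3)--(A5) is enough to make the corresponding Neumann expression strictly signed, so the sub/supersolution relation collapses to a pure $H$-inequality. In the limit $\ep \to 0$, $\delta \to 0$ and then $\alpha \to 0$, I expect the three inequalities
\[
H(\xi, \mu P + (1 - \mu)Q) \geq \eta, \quad H(\xi, P) \leq \tfrac{1}{\mu}(\eta + \mu_{1}\phi'(\sigma)), \quad H(\xi, Q) \leq 0,
\]
with $P, Q$ the limits of the scaled gradients coming from $\chi_{1}, \chi_{2}$. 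This is exactly the setup at the end of Lemma~\ref{lem:main-lem}, except that the familiar $M|D\phi|$ term is absent because $\phi$ no longer depends on $x$. Applying (A6)$_{+}$ to $\tilde P := \mu(P - Q)$ yields $H(\xi, P) \geq (\eta + \psi_{\eta}(1 - \mu))/\mu$, which combined with the middle inequality and $\mu_{1} \leq C$ gives $\phi'(\sigma) \geq \psi_{\eta}(1 - \mu)/\mu_{1} \geq (\psi_{\eta}/C)(1 - \mu)$, i.e., the required supersolution inequality.

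The main technical obstacle is the bookkeeping at boundary points: one has to verify, in the absence of any $u_{t}$ contribution in $B$, that the $\theta\mu_{1}\alpha$ term produced by (A3) via the $-\alpha\rho$ penalization dominates the $o_{\ep}(1)$ and $m(\delta + o_{\ep}(1))$ error terms, so that the $B$-branch of the viscosity relation is switched off and the pure $H$-inequality survives. This is a careful rerun of the boundary analysis in Lemma~\ref{lem:main-lem}, adapted from (DBC) to (CN); once it is in place, the rest of the argument essentially quotes the final calculation in Lemma~\ref{lem:main-lem}.
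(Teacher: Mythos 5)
Your overall strategy is the right one (the paper itself does not write this proof out; it only remarks that the lemma is proved like Lemma \ref{lem:main-lem}, and your tripling with $\chi_1,\chi_2$, the $\al\rho$ penalization, the three limit inequalities and the final use of (A6)$_+$ with $\tilde P=\mu(P-Q)$ and $\mu_1\le C$ from \eqref{bdd:u-v} is exactly that adaptation). But there is one concrete step that fails as you have set it up: you keep the same parameters for $\chi_1$, namely $a=\mu_1/\mu$, $b=-\eta/\mu$. That value of $b$ was chosen in the (DBC) proof precisely to absorb the time-derivative $u_t(\ol x,\ol t)=-\eta+o_\ep(1)$ which appears in the dynamic boundary operator $u_t+B$. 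For (CN) the boundary operator is $B$ alone, and with $b=-\eta/\mu$ Lemma \ref{lem:coercivity}(ii) only gives $B\bigl(\ol x,-\tfrac{\mu_1}{\mu}D\chi_1(\ol x-\ol y)\bigr)\le m(\cdot)+\eta/\mu$, hence after the convex splitting
\[
B(\ol x, D_xu(\ol x,\ol t))\le \eta+m(\del+o_\ep(1))-\theta\mu_1\al+o_\ep(1),
\]
which is \emph{not} negative in the regime $\ep,\del\ll\al$, since $\eta>0$ is fixed while $\al\to0$ at the end. Consequently you cannot switch off the boundary branch $B(\ol x,D_xu(\ol x,\ol t))\ge 0$ of the supersolution condition at $\ol x\in\bO$, and the first inequality $-\eta+o_\ep(1)+H(\ol x,D_xu(\ol x,\ol t))\ge 0$, on which your whole final computation rests, is not obtained. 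So the "careful rerun of the boundary analysis" you defer would not close with your parameter choice.

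The repair is simple: build $\chi_1$ from Lemma \ref{lem:func-C} with $a=\mu_1/\mu$ and $b=0$ (keeping $a=\mu_1/(1-\mu)$, $b=0$ for $\chi_2$). Then at $\ol x\in\bO$ you get $B(\ol x,D_xu(\ol x,\ol t))\le m(\del+o_\ep(1))-\theta\mu_1\al+o_\ep(1)<0$, while at $\ol y\in\bO$ and $\ol z\in\bO$ the opposite estimates $B(\ol y,D_yu(\ol y,\ol s))\ge -m(\cdot)+\theta\mu_1\al/\mu+o_\ep(1)>0$ and $B(\ol z,D_zv(\ol z))\ge -m(\cdot)+\theta\mu_1\al/(1-\mu)+o_\ep(1)>0$ still hold (here, unlike in the (DBC) proof, no $K(\cdot)$ and $M_B|\cdot_T|$ correction terms are needed because $\phi$ does not depend on $y$, which is also why no $F$- or $M|D\phi|$-term survives in the limit). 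With this modification your three limit inequalities, the application of (A6)$_+$, and the conclusion $\phi'(\sig)\ge \psi_\eta(1-\mu)/\mu_1\ge(\psi_\eta/C)(1-\mu)$ are correct.
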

\end{rem}

\section{Asymptotic Behavior II : the optimal control/dynamical system approach}\label{DSA}

As we mentioned in the introduction, we mainly concentrate on Problem (CN) 
in this section.

\subsection{Variational formulas for (CN) and (DBC)}

We begin this section with an introduction to the Skorokhod problem. 
Let $x\in\pl\Om$ and set 
\[
\begin{aligned}
G(x,\xi)=\aln\sup_{p\in\R^n}\big(\xi\cdot p-B(x,p)\big) \ \ \text{ for }\xi\in\R^n, \\ 
\cG(x)=\aln \{(\gamma, g)\in\R^n\tim\R\mid B(x,p)\geq 
\gamma\cdot p-g \ \ \text{ for all }p\in\R^n\}. 
\end{aligned}
\]
By the convex duality, we have
\[
\cG(x)=\{(\gamma,g)\in\R^{n+1}\mid g\geq G(x,\gamma)\}. 
\]
Note that
\begin{align*}
&G(x,\xi)\geq -B(x,\,0)\geq -\max_{y\in\pl\Om}B(y,\,0),\\
&\bigcup_{p\in\R^n}\Dmp B(x,p) \subset \{\gamma\in\R^n\mid G(x,\gamma)<\infty\},\\
&\ol{\bigcup_{p\in\R^n}\Dmp B(x,p)} = \ol{\{\gamma\in\R^n\mid G(x,\gamma)<\infty\}}. 
\end{align*}
We set
\[
\gG(x)=\ol{\bigcup_{p\in\R^n}\Dmp B(x,p)},
\]
and observe that $\gG(x)\subset \bar B_M$, 
$\gamma\cdot \tn(x)\geq \theta$ for $\gamma\in\gG(x)$, 
$\cG(x)$ is a convex subset of $\R^{n+1}$ and 
$\gG(x)$ is a closed convex subset of $\R^n$. Observe as well that if 
$(\gamma,g)\in\R^{n+1}$ belongs to $\cG(x)$ for some $x\in\pl\Om$, 
then $G(x,\gamma)<\infty$ and hence $\gamma\in\gG(x)$.  

For example, if $B(x,p)=\gamma(x)\cdot p-g(x)$ for some functions 
$\gamma,\, g\in C(\pl\Om)$, then 
\[
G(x,\xi)=
\begin{cases}
g(x)&\text{ if }\xi=\gamma(x),\\[3pt]
\infty&\text{ otherwise,}
\end{cases}
\]
and
\[
\cG(x)=\{(\gamma(x),r)\mid r\geq g(x)\}=\{\gamma(x)\}\tim[g(x),\,\infty),\qquad 
\gG(x)=\{\gamma(x)\}.
\]

Let $x\in\bar \Om$ and $0<T<\infty$, and let  
$\eta \in \AC([0,T],\R^n)$, $v\in L^1([0,T], \R^n)$ and 
$l\in L^1([0,T],\R)$. 
We introduce a set of conditions: 
\begin{equation}\label{i:2-1}
\left\{
\begin{aligned}
&\eta(0)=x, \\
&\eta(t)\in\bar\Om \ \ \text{ for all }t\in[0,\,T], \\
&l(t)\geq 0 \ \ \ \text{ for a.e. }t\in[0,\,T], \\
&l(t)=0 \ \ \text{ if }\eta(t)\in\Om \ \ \ \text{ for a.e. }t\in[0,\,T], 
\end{aligned}
\right. 
\end{equation}
and 
\begin{equation}
\kern-3cm
\text{there exists a function 
$f\in L^1([0,\,T],\,\R)$ such that} 
\lab{i:2-5}
\end{equation}
\[
\big((v-\dot \eta)(t),f(t)\big)\in l(t)\cG(\eta(t)) \ \ \text{ for a.e. }t\in[0,\,T].
\]

Observe here that the inclusion
\[
\big((v-\dot \eta)(t),f(t)\big)\in l(t)\cG(\eta(t))
\] 
is equivalent to the condition that 
$f(t)\geq l(t)G\big(\eta(t),l(t)^{-1}(v-\dot\eta)(t)\big)$ if $l(t)>0$, 
and $\dot \eta(t)=v(t)$ and $f(t)=0$ if $l(t)=0$.
Condition \eqref{i:2-5} is therefore equivalent to the condition that  
\begin{align}
&\text{the function } \ t\mapsto l(t) G\left(\eta(t),l(t)^{-1}(v-\dot\eta)(t)
\right) \ \text{  
is integrable on }[0,\,T],\lab{i:2-6} 
 \\
&\text{and } \ \dot\eta(t)=v(t) \ \text{ if } \ l(t)=0 \ \text{ for a.e. }t\in[0,\,T]. \notag
\end{align}
Here we have used the fact that $G$ is lower semi-continuous (hence Borel) function bounded from below by the constant $-\max_{x\in\pl\Om}B(x,0)$. 
The expression $l(t)G(\eta(t),l(t)^{-1}(v-\dot\eta)(t))$ in \eqref{i:2-6}    
is actually defined only for 
those $t\in [0,\,T]$ such that $l(t)>0$, but 
we understand that 
\[
l(t)G\left(\eta(t),l(t)^{-1}(v-\dot\eta)(t)\right)=
\begin{cases}
l(t)G\left(\eta(t),l(t)^{-1}(v-\dot\eta)(t)\right)&\text{ if } \ l(t)>0,\\[2pt]
0 &\text{ otherwise.}
\end{cases}
\] 
Similarly we henceforth use the convention that 
zero times an undefined quantity equals zero. With use of this convention, 
we define the function 
$F(\eta,v,l)$ on $[0,\,T]$ by 
\[
F(\eta,v,l)(t)=l(t)G\big(\eta(t),\,l(t)^{-1}(v(t)-\dot\eta(t))\big). 
\]
We remark that under assumption \eqref{i:2-6} we have for a.e. $t\in[0,\,T]$,
\begin{equation}\label{i:2-7}
F(\eta,v,l)(t)\geq (v-\dot\eta)(t)\cdot p-l(t)B(\eta(t),p) \ \ \text{ for all }p\in\R^n.
\end{equation}

In the case where $B(x,p)=\gamma(x)\cdot p-g(x)$ for some $\gamma, g\in C(\pl\Om)$, 
it is easily seen that condition \eqref{i:2-6} is equivalent to  
\[
\dot\eta(t)+l(t)\gamma(\eta(t))=v(t) \ \ \text{ for a.e. }t\in[0,\,T]. 
\]
(Compare this together with \eqref{i:2-1} with (1.4) in \cite{I2}.) 
In this case we have $F(\eta,v,l)(t)=l(t)g(\eta(t))$ for a.e. $t\in[0,\,T]$.

Now, given a point $x\in\bar\Om$, a constant $0<T<\infty$ 
and a function $v\in L^1([0,T],\R^n)$, 
the \emph{Skorokhod problem} is to find a pair of functions $\eta\in\AC([0,\,T],\,\R^n)$ 
and $l\in L^1([0,\,T],\,\R)$ for which \eqref{i:2-1} and \eqref{i:2-5} 
are satisfied. 

\begin{thm}\label{i:skor}Let $x\in\bar\Om$, $0<T<\infty$ 
and $v\in L^1([0,\,T],\,\R^n)$. There exists a solution 
$(\eta,\,l)$ of the Skorokhod problem. Moreover, there exists a constant 
$C>0$, independent of $x$, $T$ and $v$, such that, for any solution $(\eta,\,l)$
of the Skorokhod problem, the inequalities $|\dot\eta(t)|\leq C|v(t)|$ 
and $l(t)\leq C|v(t)|$ hold for a.e. $t\in[0,\,T]$. 
\end{thm}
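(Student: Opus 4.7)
The plan is to construct $(\eta,l)$ first for piecewise-constant inputs $v$, by solving a single-step constrained ODE on each constancy interval, then to obtain the general case $v\in L^1$ by passing to the limit in a piecewise-constant approximation using the a priori linear bounds claimed in the statement.

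\emph{Construction on a constancy interval.} On $[s_1,s_2]$ where $v\equiv v_0$, starting from $\eta(s_1)=x_0\in\cO$, two situations arise. If the free trajectory $x_0+(\cdot-s_1)v_0$ remains in $\cO$ throughout, take that for $\eta$ with $l\equiv 0$. Otherwise, let $\tau$ be the first boundary-hitting time and, for $t\ge\tau$, seek a constrained trajectory on $\bO$ solving the differential inclusion
\[
\dot\eta(t)\in v_0-l(t)\gG(\eta(t)),\qquad l(t)\ge 0,\qquad \eta(t)\in\bO.
\]
Differentiating $\rho(\eta)\equiv 0$ in $t$ gives $\tn(\eta)\cdot\dot\eta=0$, which, combined with any selection $\gamma(t)\in\gG(\eta(t))$, forces
\[
l(t)=\frac{\tn(\eta)\cdot v_0}{\tn(\eta)\cdot\gamma(t)};
\]
the denominator is uniformly bounded below by $\theta|\tn|$ owing to the obliqueness in (A3), so standard existence theory for differential inclusions with closed convex-valued, upper-semicontinuous right-hand side (Filippov, Antosiewicz--Cellina) yields a measurable $(\eta,l,\gamma)$. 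When $v_0$ eventually points inward, one re-enters $\Om$ and restarts with $l=0$. Setting $f(t):=l(t)G(\eta(t),\gamma(t))$ gives an integrable $f$, since $\gG(x)\subset\bar B_{M_B}$ and $G(x,\cdot)$ is bounded on that ball, uniformly in $x\in\cO$. Concatenating over constancy intervals produces $(\eta,l,f)$ satisfying \eqref{i:2-1} and \eqref{i:2-5}.

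\emph{Linear bounds and limit.} On an interior stretch $l\equiv 0$ and $|\dot\eta|=|v_0|$; on a boundary stretch the explicit formula for $l$ together with $|\gamma|\le M_B$ yields $l\le\theta^{-1}|v_0|$ and then $|\dot\eta|\le(1+M_B/\theta)|v_0|$, so the asserted bounds hold with $C:=\max(\theta^{-1},\,1+M_B/\theta)$ independently of the partition, of $x$, and of $T$. For a general $v\in L^1$, approximate by piecewise-constant $v_N\to v$ in $L^1$ and produce $(\eta_N,l_N,f_N)$ as above. The families $\dot\eta_N$, $l_N$, and $f_N$ are $L^1$-dominated by a constant multiple of $|v_N|$, hence equi-integrable by Vitali; a Dunford--Pettis/Arzel\`a--Ascoli extraction gives $\eta_N\to\eta$ uniformly and $\dot\eta_N\rightharpoonup\dot\eta$, $l_N\rightharpoonup l$, $f_N\rightharpoonup f$ weakly in $L^1$. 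The constraint $l=0$ on $\{\eta\in\Om\}$ passes to the limit because on any compact $K\Subset\Om$ the inclusion $\eta_N(t)\in K$ holds eventually, forcing $l_N(t)=0$ there.

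\emph{Main obstacle.} The delicate step will be the passage to the limit in the multivalued inclusion $(v-\dot\eta,f)\in l\cG(\eta)$, which couples the weak $L^1$-limits of $\dot\eta_N$ and $l_N$ with the multivalued target $\cG(\eta_N)$ evaluated along the strongly-converging $\eta_N$. I plan to argue via the dual formulation \eqref{i:2-7}: for every $p\in\R^n$ one has $f_N\ge(v_N-\dot\eta_N)\cdot p-l_N B(\eta_N,p)$ a.e., and a Balder-type lower semicontinuity result for integrands that are convex in the weakly-converging variable (together with uniform convergence of $\eta_N$ and continuity of $B$) transfers this inequality to the limit. Taking the supremum over $p\in\R^n$ and invoking $\cG(x)=\{(\gamma,g)\mid g\ge G(x,\gamma)\}$ then recovers the desired inclusion and completes the scheme.
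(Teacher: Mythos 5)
Your overall architecture (construct for nice data, get linear bounds, pass to the limit by Dunford--Pettis and the dual inequality \eqref{i:2-7}) is reasonable, but there is a genuine gap at the point you rely on most: the integrability of $f$. You claim that $f(t):=l(t)G(\eta(t),\gamma(t))$ is integrable ``since $\gG(x)\subset\bar B_{M_B}$ and $G(x,\cdot)$ is bounded on that ball, uniformly in $x\in\cO$.'' This is false in general: $G(x,\cdot)$ is only lower semicontinuous and bounded below by $-\max_{\pl\Om}B(\cdot,0)$; the set $\gG(x)$ is by definition the \emph{closure} of $\{\gamma\mid G(x,\gamma)<\infty\}$, and $G(x,\cdot)$ can be $+\infty$, or finite but arbitrarily large, on $\gG(x)$ (already for $B(x,p)=\sup_{\al}\{\gamma_\al(x)\cdot p-g_\al(x)\}$ with unbounded $g_\al$'s accumulating). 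In your construction $\gamma(t)$ is whatever selection of $\gG(\eta(t))$ the differential inclusion forces on you, so you have no mechanism to control $l(t)G(\eta(t),\gamma(t))$; hence condition \eqref{i:2-5} (equivalently \eqref{i:2-6}, the existence of an \emph{integrable} $f$) is not verified for your single-step solutions, and the equi-integrability of $f_N$ that your Dunford--Pettis extraction needs collapses as well. The paper avoids exactly this trap: by Lemma \ref{i:selec} one first fixes a \emph{continuous} selection $(\gamma,g)\in C(\pl\Om,\R^{n+1})$ with $(\gamma(x),g(x))\in\cG(x)$, solves the Skorokhod problem for the single linear condition $\gamma(x)\cdot p-g(x)$ (this is \cite[Theorem 4.1]{I2}), and sets $f(s)=l(s)g(\eta(s))$, which is integrable for free since $g$ is bounded; the inclusion $((v-\dot\eta)(s),f(s))\in l(s)\cG(\eta(s))$ then holds by construction.

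Two further points. First, the existence step you treat as ``standard Filippov theory'' is in fact the hard part you would be re-proving: on a merely $C^1$ boundary, with the multivalued map $x\mapsto\gG(x)$ whose upper semicontinuity is not established, and with possible chattering between interior and boundary segments even for constant $v_0$, the constrained-trajectory existence is precisely what the cited linear-Neumann Skorokhod theorem of \cite{I2} supplies; your sketch does not supply a substitute. Second, the theorem asserts $|\dot\eta(t)|\le C|v(t)|$ and $l(t)\le C|v(t)|$ for \emph{every} solution of the Skorokhod problem, while your argument derives them only along the solutions you build (via the explicit formula for $l$ on a boundary stretch). The general statement is Lemma \ref{i:bound-by-v}: for an arbitrary solution one differentiates $\rho\circ\eta$ a.e.\ on $\{l>0\}$ to get $n(\eta)\cdot v=l\,n(\eta)\cdot\gamma$ and then uses the obliqueness $\gamma\cdot\tn\ge\theta$ and $|\gamma|\le M$; your computation is the same in spirit and would adapt, but as written it does not cover arbitrary solutions.
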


For fixed $x\in\bar\Om$ and $0<T<\infty$,  
$\SP_T(x)$ denotes the set of the all triples $(\eta,v,l)$ 
of functions $\eta\in\AC([0,\,T],\,\R^n)$, 
$v\in L^1([0,\,T],\R^n)$ and $l\in L^1([0,\,T],\,R)$ 
such that conditions \eqref{i:2-1}, \eqref{i:2-5}
are satisfied, and 
$\SP(x)$ denotes the set of the triples $(\eta,v,l)$ 
of functions $\eta$, $v$ and $l$ on $[0,\,\infty)$ 
such that, for all $0<T<\infty$, the restriction of $(\eta,v,l)$ to 
the interval $[0,\,T]$ belongs to $\SP_T(x)$. 

Note here that if $(\eta,v,l)\in \SP_T(x)$ for some $x\in\bar\Om$ and $0<T<\infty$ 
and if we extend the domain of $(\eta,v,l)$ to $[0,\infty)$ by setting 
\[
\eta(t)=\eta(T), \quad v(t)=0,\quad\text{ and }\quad l(t)=0  \qquad \text { for }t>T,
\]
then the extended $(\eta,v,l)$ belongs to $\SP(x)$. 

Moreover we set
\[
\SP_T=\bigcup_{x\in\bar\Om}\SP_T(x) \ \ \text{ and } \ \ \SP=\bigcup_{x\in\bar\Om}\SP(x). 
\]

Let $u_0\in C(\bar \Om)$. For $t>0$ we set  
\begin{equation}\label{i:2-8}
U (x,t)=\inf \Big\{
\int_0^t \big(L(\eta(s),-v(s))+f(s)\big)\d s +u_0(\eta(t))\mid (\eta,v,l)\in\SP(x)\Big\}, 
\end{equation}
where $L(x,\xi):=\sup_{p\in\R^n}(\xi\cdot p-H(x,p))$ and $f:=F(\eta,v,l)$. 
We call the function $L$ the Hamiltonian of $H$. This function has the properties: 
$L(x,\xi)$ is lower semicontinuous on $\bar\Om\tim\R^n$, convex in $\xi\in\R^n$ and  
coercive, i.e., 
$\lim_{|\xi|\to\infty}L(x,\xi)=\infty$ for all $x\in\bar\Om$. The function $L$ may take the value 
$\infty$, but $\sup_{\bar\Om\tim B_r}L<\infty$ for some constant $r>0$.

\begin{thm}\label{i:visco-en}
The function $U$ is continuous on $\bar\Om\tim(0,\infty)$ and a solution of 
\begin{align}
&u_t+H(x,Du)=0 \ \ \text{ in }\Om\tim(0,\,\infty),\lab{i:2-9}\\
&B(x,Du)=0 \ \ \ \text{ on } \pl\Om\tim(0,\,\infty). \lab{i:2-10} 
\end{align}
Moreover, we have 
\[
\lim_{t\to 0+}U (x,t)=u_0(x) \ \ \text{ uniformly on }\bar\Om. 
\]
\end{thm}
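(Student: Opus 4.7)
The plan is to establish the result via a dynamic programming principle (DPP) for $U$, from which the Hamilton--Jacobi equation and the boundary condition follow in the standard manner, adapted to the Skorokhod framework. The first step is to prove that for all $x\in\bar\Om$ and $0<s<t$,
\[
U(x,t)=\inf_{(\eta,v,l)\in\SP(x)}\left\{\int_0^s\bigl(L(\eta(r),-v(r))+f(r)\bigr)\d r+U(\eta(s),t-s)\right\},
\]
where $f=F(\eta,v,l)$. This DPP follows from the concatenation property of Skorokhod triples: a near-optimal $(\eta,v,l)\in\SP(x)$ on $[0,s]$ can be glued to a near-optimal triple at $\eta(s)\in\bar\Om$ on $[0,t-s]$ to produce an element of $\SP(x)$ on $[0,t]$, and conversely any triple in $\SP(x)$ restricts appropriately.

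For the continuity of $U$ and the initial condition, I would first test the functional with the admissible choice $\eta\equiv x$, $v\equiv 0$, $l\equiv 0$, which satisfies \eqref{i:2-1}--\eqref{i:2-5} with $f\equiv 0$ (since $\dot\eta=v=0$ when $l=0$), yielding the upper bound $U(x,t)\le tL(x,0)+u_0(x)$. For a matching lower bound, I would use the coercivity and lower semicontinuity of $L$ together with the linear estimates $|\dot\eta|\le C|v|$ and $l\le C|v|$ of Theorem~\ref{i:skor} and the uniform bound $G(x,\gamma)\ge-\max_{\pl\Om}B(\cdot,0)$; this pinches $U(x,t)-u_0(x)$ to zero uniformly as $t\to 0+$. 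Joint continuity in $(x,t)$ on $\bar\Om\tim(0,\infty)$ then follows from the DPP and the uniform continuity of $u_0$.

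For the interior viscosity equation, suppose $U-\phi$ has a strict local maximum at $(x_0,t_0)\in\Om\tim(0,\infty)$. Testing the DPP with $\eta(r)=x_0-r\xi$, $v(r)=-\xi$, $l\equiv 0$ on a small interval $[0,h]$ (admissible since $x_0\in\Om$), dividing by $h$ and sending $h\to 0^+$, I obtain $\phi_t(x_0,t_0)+D\phi(x_0,t_0)\cdot\xi-L(x_0,\xi)\le 0$; taking the supremum over $\xi\in\R^n$ yields $\phi_t+H(x_0,D\phi)\le 0$ by Legendre duality. The interior supersolution inequality follows symmetrically, using near-optimal triples in the DPP and the coercivity/lower semicontinuity of $L$ to pass to the limit. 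At a boundary test point $(x_0,t_0)\in\pl\Om\tim(0,\infty)$, for the subsolution inequality I would argue by contradiction: if $\phi_t+H(x_0,D\phi)>0$ and $B(x_0,D\phi)>0$ simultaneously, then every Skorokhod triple starting at $x_0$ can be shown to violate DPP by invoking the Fenchel--Young relation \eqref{i:2-7} with $p=D\phi(x_0,t_0)$ to linearize both the interior and the boundary running cost.

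The main obstacle is the boundary supersolution inequality: I need to pass to the limit in the DPP along near-optimal Skorokhod triples whose reflection data $l(r)$ and rescaled directions $(v-\dot\eta)/l$ may concentrate on $\pl\Om$ in a delicate manner. The essential device is again \eqref{i:2-7}, $F(\eta,v,l)(r)\ge (v-\dot\eta)(r)\cdot p-l(r)B(\eta(r),p)$, which, after integration by parts of $D\phi\cdot\dot\eta$ along the trajectory, linearizes the boundary cost in $p=D\phi(x_0,t_0)$ and yields the desired $\min\{\phi_t+H(x_0,D\phi),B(x_0,D\phi)\}\ge 0$ in the limit; the obliqueness estimate $\gamma\cdot\tn\ge\theta$ on $\gG(x_0)$ together with the coercivity of $L$ prevents any pathological blow-up in the competing sequence.
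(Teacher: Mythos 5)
Your outline reproduces the easy parts of the paper's argument (the dynamic programming principle, the constant trajectory $\eta\equiv x$, $v\equiv 0$, $l\equiv 0$ for the upper bound near $t=0$, and the Fenchel--Young/\eqref{i:2-7} computation for the boundary supersolution inequality, which is indeed how the paper proceeds), but the boundary \emph{subsolution} step, which is the heart of the theorem, is wrong as described. You propose to assume $\phi_t+H(x_0,D\phi)>0$ and $B(x_0,D\phi)>0$ and then show that ``every Skorokhod triple starting at $x_0$ violates the DPP by invoking \eqref{i:2-7}''. This cannot work: \eqref{i:2-7} bounds the running cost $L(\eta,-v)+f$ from \emph{below}, which is exactly what one needs for the supersolution inequality, whereas to contradict the failure of the subsolution inequality you must bound the cost from \emph{above} along one well-chosen admissible triple (the DPP is an infimum, so a lower bound on the integrand along arbitrary triples contradicts nothing). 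Producing such a triple is nontrivial: one needs a trajectory along which Fenchel--Young is an approximate equality with respect to the moving gradient $p(s)=D\phi(\eta(s),\bar t-s)$ \emph{and} whose reflection term is compatible with the boundary cost. The paper achieves this by first linearizing the nonlinear boundary condition from below via the continuous selection $(\gamma,g)$ of Lemma \ref{i:selec} (so that $\gamma(\x)\cdot D\phi(\x,\t)-g(\x)>0$ and $f=l\,g(\eta)$, $\dot\eta+l\gamma(\eta)=v$ along the trajectory), and then invoking \cite[Lemma 5.5]{I2} for the resulting linear-Neumann Skorokhod problem to get a triple with $H(\eta(s),p(s))+L(\eta(s),-v(s))\le\gep-v(s)\cdot p(s)$. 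Both ingredients are absent from your proposal, and without them the subsolution property on $\pl\Om\tim(0,\infty)$ is not proved.

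A second, lesser gap is the continuity of $U$. You assert that joint continuity on $\bar\Om\tim(0,\infty)$ ``follows from the DPP and the uniform continuity of $u_0$'', but spatial continuity of the value function of reflected dynamics is not automatic: there is no stability estimate here comparing Skorokhod triples issued from nearby initial points, and your viscosity arguments test $U$ itself (strict maxima of $U-\phi$), which presupposes the continuity you have not established. The paper avoids this circle by proving the sub- and supersolution properties for the semicontinuous envelopes $U^*$ and $U_*$, establishing $U^*(\cdot,0)\le u_0\le U_*(\cdot,0)$ (for the lower bound it builds a $C^1$ approximation $u_0^\gep$ with $B(x,Du_0^\gep)\le 0$ on $\pl\Om$ so that $u_0^\gep(x)-C_\gep t$ is a classical subsolution), and then deducing $U^*\le U_*$, hence continuity and the uniform initial limit, from Theorem \ref{thm:comparison}. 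Your direct ``pinching'' argument for the initial layer (coercivity of $L$, Lemma \ref{i:bound-by-v}, $G\ge-\max_{\pl\Om}B(\cdot,0)$) can be made to work for Lipschitz $u_0$ and is a reasonable variant, but it does not by itself give continuity of $U$ in $x$ for positive times, so the comparison-based step (or some substitute) is still needed.
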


We set $U(x,0)=u_0(x)$ for $x\in\bar\Om$. The above theorem ensures that 
$U\in C(\bar\Om\tim[0,\,\infty))$ and $U$ is a solution of (CN). 

In what follows we give an outline of proofs of Theorems \ref{i:skor} and \ref{i:visco-en}. Indeed, most of the arguments parallel to those for similar assertions 
in \cite{I2} for (CN) with linear Neumann boundary condition. 

\begin{lem}\label{i:selec}
Let $\psi\in C(\pl\Om,\R^n)$ and $\gep>0$. There exists 
$(\gamma,g)\in C(\pl\Om,\R^{n+1})$ such that for all $x\in\pl\Om$, 
$(\gamma(x),g(x))\in\cG(x)$ and 
$B(x,\psi(x))<\gep+\gamma(x)\cdot \psi(x)-g(x)$. 
\end{lem}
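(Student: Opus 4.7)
The plan is to reduce to a smooth-in-$p$ situation by mollifying $B$ in its $p$-variable. Let $\phi\in C_c^\infty(\R^n)$ be a standard nonnegative mollifier with $\int_{\R^n}\phi = 1$ and $\supp\phi\subset\{|y|\leq 1\}$, set $\phi_\delta(y)=\delta^{-n}\phi(y/\delta)$, and define
\[
B^\delta(x,p) := \int_{\R^n} B(x,p-y)\,\phi_\delta(y)\,dy \qquad \text{for } (x,p)\in\pl\Om\tim\R^n,\ \delta>0.
\]
By (A5), $p\mapsto B^\delta(x,p)$ is $C^\infty$ and convex for each fixed $x$. A change of variables gives $\nabla_p B^\delta(x,p) = \int_{\R^n} B(x,z)\,\nabla\phi_\delta(p-z)\,dz$; since the integrand is continuous in $(x,p,z)$ and supported in $|z-p|\leq\delta$, dominated convergence shows that $\nabla_p B^\delta$ is jointly continuous on $\pl\Om\tim\R^n$. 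Moreover, (A4) yields the uniform estimate $|B^\delta(x,p)-B(x,p)|\leq M_B\delta$ for all $(x,p)$.

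Having this at hand, I would fix $\delta\in(0,\gep/(2M_B))$ and define
\[
\gamma(x) := \nabla_p B^\delta\bigl(x,\psi(x)\bigr), \qquad
g(x) := \gamma(x)\cdot\psi(x) - B^\delta\bigl(x,\psi(x)\bigr) + M_B\delta.
\]
Joint continuity of $\nabla_p B^\delta$ and continuity of $\psi$ yield $\gamma\in C(\pl\Om,\R^n)$ and $g\in C(\pl\Om,\R)$. Because $\gamma(x)$ is the gradient, hence a subgradient, of the convex function $B^\delta(x,\cdot)$ at $\psi(x)$,
\[
B^\delta(x,q) \geq B^\delta(x,\psi(x)) + \gamma(x)\cdot(q-\psi(x)) \qquad\text{for all } q\in\R^n;
\]
combining with $B(x,q)\geq B^\delta(x,q)-M_B\delta$ gives $B(x,q)\geq \gamma(x)\cdot q - g(x)$ for every $q\in\R^n$, i.e.\ $(\gamma(x),g(x))\in\cG(x)$. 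The approximation inequality follows from $\gamma(x)\cdot\psi(x)-g(x) = B^\delta(x,\psi(x)) - M_B\delta \geq B(x,\psi(x)) - 2M_B\delta > B(x,\psi(x)) - \gep$.

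The main subtle point is producing a genuinely continuous selection of a subgradient: the raw multifunction $x\mapsto \Dmp B(x,\psi(x))$ is merely upper semicontinuous and generally admits no continuous selection. Even in the affine case $B(x,p)=\gamma(x)\cdot p - g(x)$ any element of $\cG(x)$ is forced to have first coordinate exactly $\gamma(x)$, so a constant-pair partition-of-unity argument cannot succeed. The mollification circumvents this by replacing $B$ with a function whose $p$-gradient is a jointly continuous single-valued map of $(x,p)$; the buffer $M_B\delta$ added to $g$ absorbs the mollification error so that the selected pair lies genuinely in $\cG(x)$, while the total slack $2M_B\delta$ stays below $\gep$.
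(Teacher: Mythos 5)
Your proof is correct and follows essentially the same strategy as the paper's: regularize $B$ in the $p$-variable to obtain a convex function that is differentiable in $p$ with jointly continuous gradient, then define $\gamma(x)$ and $g(x)$ from the supporting hyperplane at $\psi(x)$. The only difference is the smoothing device — the paper uses the Moreau--Yosida inf-convolution, for which $B_\delta\le B$ so the tangent plane automatically minorizes $B$ with no correction, whereas your mollified $B^\delta$ may exceed $B$, which you correctly compensate by the (A4)-based buffer $M_B\delta$ added to $g$.
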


\begin{proof} Let $\delta>0$ and set
\[
B_\delta(x,p)=\inf_{r\in\R^n}\left(B(x,q)+\fr{1}{2\delta}|p-q|^2\right) \ \ \text{ for all }
(x,p)\in\pl\Om\tim\R^n. 
\] 
Note that $B_\delta\leq B$ on $\pl\Om\tim\R^n$, 
that, as $\gd\to 0$, $B_\delta(x,p)\to B(x,p)$ uniformly on 
$\pl\Om\tim B_R(0)$ for every $R>0$ and that for $x\in\pl\Om$, 
the function $p\mapsto B_\delta(x,p)$ is in $C^{1+1}(\R^n)$. 
Also, by (A4) we see that 
\[
B_\delta(x,p)=\min_{|p-q|\leq R}\left(B(x,q)+\fr{1}{2\delta}|p-q|^2\right) 
\ \ \text{ for }(x,p)\in\pl\Om\tim\R^n
\]
for some $R>0$ depending only on $\delta$ and $M_{B}$. 
Hence, $B_\delta\in C(\pl\Om\tim \R^n)$. 
Moreover, it is easy to see that $D_p B_\delta\in C(\pl\Om\tim\R^n)$. 
Indeed, if $(x_j,p_j)\to (y,q)$ as $j\to\infty$ and $\xi_j:=D_p B_\delta(x_j,p_j)$, then 
\[
B_\delta(x_j,p)\geq B_\delta(x_j,p_j)+\xi_j\cdot (p-p_j) \ \ \text{ for all }p\in\R^n. 
\]
Noting that $|\xi_j|\leq M_{B}$, we may choose a  
subsequence $\{\xi_{j_k}\}_{k\in\N}$, converging to a point $\eta$, of $\{\xi_j\}$. 
From the above inequality with $j=j_k$, we get in the limit 
\[
B_\delta(y,p)\geq \eta\cdot(p-q)+B_\delta(y,q) \ \ \text{ for }p\in\R^n. 
\]
This shows that $\eta=D_p B_\delta(y,q)$, which implies that  
$\lim_{j\to\infty} D_p B_\delta(x_j,p_j)=D_p B_\delta(y,q)$ and $D_p B_\gd\in C(\pl\Om\tim\R^n)$.  

If we set $\gamma(x)=D_p B_\delta(x,\psi(x))$ and 
$g(x)=\gamma(x)\cdot \psi(x)-B_\delta(x,\psi(x))$, 
then we have for all $(x,p)\in\pl\Om\tim\R^n$,  
\[
B(x,p)\geq B_\delta(x,p)\geq \gamma(x)\cdot(p-\psi(x))
+B_\delta(x,\psi(x))=\gamma(x)\cdot p-g(x).
\] 
Thus, we find that $(\gamma(x),g(x))\in \cG(x)$ for all $x\in\pl\Om$.  
Moreover, for each fixed $\gep>0$, if $\delta>0$ is small enough, then we have  
\[
B(x,\psi(x))<\gep+B_\delta(x,\psi(x))=\gep+\gamma(x)\cdot \psi(x)-g(x). 
\] 
Finally, we note that $(\gamma,g)\in C(\pl\Om,\R^{n+1})$ and conclude the proof. 
\end{proof}

\begin{lem}\label{i:bound-by-v} Let $0<T<\infty$. 
There is a constant $C>0$, depending only on $\theta$ and $M_B$, 
such that for any $(\eta,\,v,\,l)\in\SP_T$,   
\[ 
\max\{|\dot\eta(s)|,\, l(s)\}\leq C|v(s)| \ \ \hb{ for a.e. }s\in [0,\,T]. 
\]
\end{lem}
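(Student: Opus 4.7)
The plan is to translate the abstract inclusion in \eqref{i:2-5} into the two concrete bounds that the obliqueness assumption (A3) and the Lipschitz bound (A4) impose on admissible reflection directions, and then to exploit the fact that $\rho\circ\eta$ attains its maximum on the contact set with $\bO$ to control $l$ in terms of $v$.

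First I would unpack the inclusion $((v-\dot\eta)(s),f(s))\in l(s)\cG(\eta(s))$. At any $s$ with $l(s)>0$, dividing by $l(s)$ shows that $(v-\dot\eta)(s)/l(s)\in \gG(\eta(s))$. Since the elements of $\gG(x)$ are limits of elements of $\Dmp B(x,\cdot)$, assumption (A4) forces $\gG(x)\subset \bar B_{M_B}$, while (A3) gives $\gamma\cdot\tn(x)\geq\theta$ for every $\gamma\in\gG(x)$. This yields the pointwise bounds
\[
|(v-\dot\eta)(s)|\leq M_B\, l(s), \qquad (v-\dot\eta)(s)\cdot\tn(\eta(s))\geq \theta\, l(s),
\]
valid wherever $l(s)>0$. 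Wherever $l(s)=0$, the convention attached to \eqref{i:2-6} forces $\dot\eta(s)=v(s)$ directly, so there is nothing to do there.

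Next I would show that $\tn(\eta(s))\cdot\dot\eta(s)=0$ for a.e.\ $s$ in the contact set $E:=\{s\in[0,T]:\eta(s)\in\bO\}$. To this end, consider $g(s):=\rho(\eta(s))$, which is absolutely continuous since $\eta\in\AC$ and $\rho\in C^{1}$. One has $g\leq 0$ on $[0,T]$ with $g=0$ precisely on $E$. By the classical fact that an absolutely continuous function has vanishing derivative almost everywhere on any set where it attains its maximum, $g'(s)=0$ for a.e.\ $s\in E$; equivalently,
\[
\tn(\eta(s))\cdot\dot\eta(s)=0 \quad \text{for a.e.\ }s\in E.
\]
Since \eqref{i:2-1} forces $l(s)>0 \Rightarrow \eta(s)\in\bO$ for a.e.\ $s$, this identity applies at every point relevant to the final bound on $l$.

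Finally I would combine the two steps. On $\{l>0\}\subset E$ (up to a null set), writing $\dot\eta=v-l\gamma$ with $\gamma:=(v-\dot\eta)/l\in\gG(\eta)$ and pairing with $\tn(\eta)$ gives
\[
0 = v(s)\cdot\tn(\eta(s)) - l(s)\,\gamma\cdot\tn(\eta(s)) \leq v(s)\cdot\tn(\eta(s)) - \theta\, l(s),
\]
so that $l(s)\leq \theta^{-1}\|\tn\|_{\infty}|v(s)|$, and then $|\dot\eta(s)|\leq |v(s)|+M_B\, l(s)\leq (1+M_B\theta^{-1}\|\tn\|_{\infty})|v(s)|$. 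Taking $C$ to be the larger of these two prefactors completes the proof. The only genuinely analytic ingredient is the vanishing-derivative statement on the contact set $E$; everything else is an algebraic manipulation of the convex-duality characterization of $\cG(x)$. I expect that step to be the main obstacle, though it is a standard consequence of absolute continuity once one sets it up correctly.
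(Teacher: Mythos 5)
Your argument is correct and is essentially the paper's own proof: you split according to $l(s)=0$ or $l(s)>0$, use $(v-\dot\eta)(s)/l(s)\in\gG(\eta(s))$ together with $|\gamma|\le M_B$ (from (A4)) and $\gamma\cdot\tn\ge\theta$ (from (A3)), and annihilate the normal component of $\dot\eta$ a.e.\ on the contact set via $\frac{d}{ds}\rho(\eta(s))=0$ there, exactly as the paper does. The only cosmetic difference is that you pair with $\tn=D\rho$ rather than the unit normal, so $\|D\rho\|_{\infty}$ enters your constant; this is harmless (normalize $\rho$, or divide by $|D\rho(\eta(s))|$ as the paper does).
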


Recall that $M_B >0$ is a Lipschitz bound of   
the functions $p\mapsto B(x,p)$ 
\textrm{for all} $x\in\pl\Om$.

An immediate consequence of the above lemma is that for $(\eta,\,v,\,l)\in\SP$, 
if $I$ is an interval of $[0,\infty)$ and $v\in L^p(I,\,\R^n)$, 
with $1\leq p\leq \infty$, then  $(\dot\eta,\,l)\in L^p(I,\,\R^{n+1})$.  
 
\begin{proof}
Let $(\eta,v,l)\in\SP_T$  
and set $\xi=v-\dot\eta$ on $[0,\,\infty)$. We choose a function $f\in L^1([0,\,T],\R)$ 
so that $((v-\dot\eta)(s),f(s))\in l(s)\cG(\eta(s))$ for a.e. $s\in[0,\,T]$. 
If $l(s)=0$ for a.e. $s\in[0,\,T]$, then we have $\dot\eta(s)=v(s)$ for a.e. $s\in[0,\,T]$, 
which yields $\max\{|\dot\eta(s)|,\, l(s)\}=|v(s)|$ for a.e. $s\in[0,\,T]$, and we are done. 
Henceforth we assume that the set $E:=\{s\in[0,\,T]\mid l(s)>0\}$ has positive measure. 
We choose a subset $E_0$ of $E$ having full measure so that $E_0\subset (0,\,T)$, that 
$\eta(s)\in\pl\Om$ and $((v-\dot\eta)(s),f(s))\in l(s)\cG(\eta(s))$ 
for all $s\in E_0$ and that $\eta$ is differentiable everywhere in $E_0$. 
We set $\gamma(s)=l(s)^{-1}(v(s)-\dot\eta(s))$ for $s\in E_0$, and   
note that $\gamma(s)\in\gG(\eta(s))$ for all $s\in E_0$.

Using the defining function $\rho$ (cf. (A0)) and noting that $\rho(\eta(s))\leq 0$ for all $s\in [0,\,T]$, 
we find that for any $s\in E_0$,  
\[
0=\fr{\d}{\d s}\rho(\eta(s))=D\rho(\eta(s))\cdot\dot\eta(s)
=|D\rho(\eta(s))|n(\eta(s))\cdot (v(s)-l(s)\gamma(s)). 
\]
That is, $n(\eta(s))\cdot v(s)=l(s) n(\eta(s))\cdot \gamma(s)$ for all $s\in E_0$. 
Fix any $s\in E_0$. Since $\gamma(s)\in\gG(\eta(s))$, we have $|\gamma(s)|\leq M$ and 
$\gamma(s)\cdot n(\eta(s))\geq \theta |\gamma(s)|$.   
Accordingly, we get 
\[
|v(s)|\geq 
n(\eta(s))\cdot v(s)=
n(\eta(s))\cdot l(s)\gamma(s)\geq l(s)\theta, 
\]
and hence, $l(s)\leq |v(s)|/\theta$. Finally, we note
that
$
|\dot\eta(s)|\leq |v(s)|+|\xi(s)|\leq (1+M/\theta)|v(s)|$, 
which completes the proof. 
\end{proof}

\begin{proof}[Proof of Theorem {\rm \ref{i:skor}}]
Fix $x\in\bar\Om$, $0<T<\infty$ and $v\in L^1([0,\,T],\R^n)$.  
Due to Lemma \ref{i:selec}, there exists 
a $(\gamma,g)\in C(\pl\Om,\,\R^{n+1})$ such that 
$(\gamma(x),g(x))\in\cG(x)$ for all $x\in \pl\Om$. 
According to \cite[Theorem 4.1]{I2}, 
there exists a pair $(\eta,l)\in\AC([0,\,T],\R^n)\tim L^1([0,\,T],\R)$ 
such that $\eta(0)=x$, $\eta(s)\in\bar\Om$ for all $s\in[0,\,T]$ and for a.e. 
$s\in[0,\,T]$,   
\[
l(s)\geq 0, \qquad
l(s)=0 \ \ \text{ if } \ \eta(s)\in\Om, \quad \text{ and }\quad
\dot\eta(s)+l(s)\gamma(\eta(s))=v(s). 
\]
We set $f(s)=l(s)g(\eta(s))$ for 
$s\in[0,\,T]$, and observe that we have for a.e. $s\in[0,\,T]$,  
\[
((v-\dot\eta)(s),\,f(s))=l(s)(\gamma(s),\,g(s))\in l(s)\cG(\eta(s)),\] 
completing the existence part of the proof. 
The remaining part of the proof is 
exactly what Lemma \ref{i:bound-by-v} 
guarantees.
\end{proof}

%
%

\begin{proof}[Proof of Theorem {\rm \ref{i:visco-en}}]  Set $Q=\bar\Om\tim(0,\,\infty)$.  
We first prove that 
$U$ is a subsolution of \eqref{i:2-9}, \eqref{i:2-10}.    
Let $(\x,\t)\in Q$ and 
$\phi\in C^1(Q)$. Assume that 
$U^*-\phi$ attains a strict maximum at $(\x,\t)$. We need to show that if $\x\in\Om$, then  
$\phi_t(\x,\t)+H(\x,D\phi(\x,\t))\leq 0$, 
and if $\x\in\pl\Om$, then either 
\begin{equation}
\phi_t(\x,\t)+H(\x,D\phi(\x,\t))\leq 0 \ \ \hb{ or } \ \ B(\x,\,D\phi(\x,\t))\leq 0. 
\label{i:2-11}
\end{equation} 

We are here concerned only with the case where $\x\in\pl\Om$. The other case can be treated 
similarly. To prove \eqref{i:2-11}, 
we argue by 
contradiction. Thus we suppose that (\ref{i:2-11}) were false. 
We may choose an $\gep\in(0,\,1)$ so that \, $
\phi_t(\x,\t)+H(\x,D\phi(\x,\t))>\gep$ \, and \, $B(\x,\, D\phi(\x,\t))>\gep$. 

By Lemma \ref{i:selec}, we may choose $(\gamma,g)\in C(\pl\Om,\R^{n+1})$ 
so that $(\gamma(x),g(x))\in\cG(x)$ for all $x\in\pl\Om$ and 
$B(\x,D\phi(\x,\t))<\gep+\gamma(\x)\cdot D\phi(\x,\t)-g(\x)$.   
Note that $\gamma(\x)\cdot D\phi(\x,\t)-g(\x)>0$. 
Set $R=\bar B_{2\gep}(\x)\tim[\t-2\gep,\,\t+2\gep]$. 
By replacing $\gep>0$ if needed, we may assume that 
$\t-2\gep>0$ and for all 
$(x,t)\in R\cap Q$, 
\begin{equation}
\phi_t(x,t)+H(x,D\phi(x,t))\geq \gep \ \ \hb{ and  } \ \  
\gamma(x)\cdot D\phi(x,t)-g(x)\geq 0,
\label{i:2-12}
\end{equation}  
where $\gamma$ and $g$ are assumed to be defined and continuous on $\bar\Om$. 
We may assume that $(U^*-\phi)(\x,\t)=0$. 
Set $m=-\max_{Q\cap\pl R}(U^*-\phi)$, and note that $m>0$ 
and
$U (x,t)\leq \phi(x,t)-m$ for $(x,t)\in Q\cap\pl R$.  
We choose a point $(\bar x,\bar t)\in (\bar B_{\gep}(\x)\tim[\t-\gep,\,\t+\gep])\cap Q$ 
so that $(U-\phi)(\bar x,\bar t)>-m$.  

Now, we consider the Skorokhod problem with the function $\gamma(x)\cdot p-g(x)$ 
in place of $B(x,p)$. For the moment we denote by $\SP_T(x\,;\,\gamma,g)$ the set of all 
$(\eta,v,l)\in\AC([0,\,T],\R^n)\tim L^1([0,\,T],\R^n)\tim L^1([0,\,T],\R)$ 
satisfying \eqref{i:2-1} and \eqref{i:2-5}, with the function $\gamma(x)\cdot p-g(x)$ 
in place of $B(x,p)$. 
We apply \cite[Lemma 5.5]{I2}, to find a triple $(\eta,v,l)\in
\SP_{\bar t}(\bar x\,;\,\gamma,g)$ 
such that 
for a.e. $s\in (0,\bar t)$, 
\begin{equation}
H(\eta(s),D\phi(\eta(s),\bar t-s))+L(\eta(s),-v(s))\leq \gep-v(s)\cdot D\phi(\eta(s),\bar t-s)
\label{i:2-13}
\end{equation}
Note here that, since $(\eta,v,l)\in\SP_\gs(\bar x\,;\,\gamma,g)$, 
we have $\dot\eta(t)+l(s)\gamma(s)=v(s)$ 
and $F(\eta,v,l)(s)=l(s)g(\eta(s))$ for a.e. $s\in[0,\bar t]$.

We set $\gs=\min\{s\geq 0 \mid (\eta(s),\bar t-s)\in \pl R\}$ and note that  
$(\eta(s),\bar t -s)\in Q\cap R$ for all $0\leq s\leq\gs$ and 
$0<\gs\leq \bar t$. 
Using the dynamic programming  
principle, we obtain 
\begin{align*}
\phi(\bar x,\bar t)<&\, U(\bar x,\bar t)+m\\
\leq&\, 
\int_0^\gs \big(L(\eta(s),-v(s))+g(\eta(s))l(s)\big)\d s+U(\eta(\gs),\bar t-\gs)
+m\\
\leq&\,  \int_0^\gs \big(L(\eta(s),-v(s))+g(\eta(s))l(s)\big)\d s+\phi(\eta(\gs),\bar t-\gs).
\end{align*}
Hence, setting $p(s):=D\phi(\eta(s),\,\bar t-s)$, we get  
\begin{align*}
0<&\, \int_0^\gs\big(L(\eta(s),-v(s))+g(\eta(s))l(s)
+\fr{\d}{\d s}\phi(\eta(s),\bar t-s)
\big)\d s\\
\leq &\, \int_0^\gs\big(L(\eta(s),-v(s))+g(\eta(s))l(s)  
+p(s)\cdot(v(s)-l(s)\gamma(\eta(s))\\
&\,-\phi_t(\eta(s),\bar t-s)
\big)\d s. 
\end{align*}
Using \eqref{i:2-13} and \eqref{i:2-12}, we obtain  
\[ 
0<\int_0^\gs \big\{\gep-H(\eta(s),p(s))-\phi_t(\eta(s),\bar t-s)  
+l(s)\big(g(\eta(s))-\gamma(\eta(s))\cdot p(s)
\big)\big\}\d s\leq 0, 
\] 
which is a contradiction. 
Thus, $U$ 
is a subsolution of \eqref{i:2-9}, \eqref{i:2-10}.

Next, we turn to the proof of the supersolution property of $U$. 
Let $\phi\in C^1(Q)$ and $(\x,\t)\in Q$. 
Assume that $U_*-\phi$ attains a strict minimum at $(\x,\t)$. 
We need to show that if $\x\in\Om$, then $
\phi_t(\x,\t)+H(\x,D\phi(\x,\t))\geq 0$,  
and if $\x\in\pl\Om$, then either 
\begin{equation}
\phi_t(\x,\t)+H(\x,D\phi(\x,\t))\geq 0 \ \ \hb{ or } \ \ 
B(\x,\,D\phi(\x,\t))\geq 0.\label{i:2-14}
\end{equation}

As before, we only consider the case where $\x\in\pl\Om$. 
To prove \eqref{i:2-14}, 
we suppose by contradiction that \, $
\phi_t(\x,\t)+H(\x,D\phi(\x,\t))<0$ \, and  \,  $B(\x,\,D\phi(\x,\t))<0$.
There is a constant $\gep>0$ such that  
\begin{equation}
\label{i:2-15}
\begin{aligned}
&\phi_t(x,t)+H(x,D\phi(x,t))<0 \ \ \hb{ and } \\
&B(x,\,D\phi(x,t))<0 \ \ \hb{ for all }(x,t)\in R\cap Q, 
\end{aligned}
\end{equation}
where $R:=\bar B_{2\gep}(\x)\tim[\t-2\gep,\t+2\gep]$.
Here we may assume that $
\t-2\gep>0$ and $(U_*-\phi)(\x,\t)=0$. 

Set $m:=\min_{Q\cap \pl R}(U_*-\phi)\ (>0)$.
We may choose a point $(\bar x,\bar t)\in (B_{\gep}(\x)\tim(\t-\gep,\t+\gep))\cap Q$ so that 
$(U-\phi)(\bar x,\bar t)<m$.   
We select a triple $(\eta,v,l)\in\SP(\bar x)$ so that 
\[
U(\bar x,\bar t)+m>\int_0^{\bar t}\big(L(\eta(s),-v(s))+f(s)\big)\d s
+u_0(\eta(\bar t)),
\]
where $f:=F(\eta,v,l)$. We set $\gs
=\min\{s\geq 0\mid (\eta(s),\bar t-s)\in\pl R\}$.  
It is clear that $\gs>0$ and $\eta(s)\in R\cap Q$ for all $s\in[0,\,\gs]$. 
Accordingly, we have  
\begin{align*}
\phi(\bar x,\bar t)+m>&\, \int_0^\gs \big(
L(\eta(s),-v(s))+f(\eta(s))
\big)\d s+U(\eta(\gs),\bar t-\gs)\\
\geq&\, \int_0^\gs \big(
L(\eta(s),-v(s))+f(\eta(s)) 
\big)\d s+\phi(\eta(\gs),\bar t-\gs)+m, 
\end{align*}
and hence,
\[
0>\int_0^\gs\big(
L(\eta(s),-v(s))+f(\eta(s))+D\phi(\eta(s),\bar t-s)\cdot\dot\eta(s)
-\phi_t(\eta(s),\bar t-s)\big)\d s.
\]
Note by the Fenchel-Young inequality and \eqref{i:2-7} that for a.e. $s\in[0,\,\gs]$, 
\[
L(\eta(s),-v(s))+f(s)\geq -\dot\eta(s)\cdot p(s)
-H(\eta(s),\,p(s))-l(s)B(\eta(s),\,p(s)), 
\]
where $p(s):=D\phi(\eta(s),\bar t-s)$.  Consequently, in view of \eqref{i:2-15} we get 
\[
0>\int_0^\gs \big(-H(\eta(s),\,p(s))
-\phi_t(\eta(s),\,\bar t-s)-l(s)B(\eta(s),\,p(s))\big)\d s\geq 0,
\]
which is a contradiction. The function $U$ is thus a supersolution of 
\eqref{i:2-9}, \eqref{i:2-10}.  

It remains to show the continuity of $U$ on $\bar\Om\tim[0,\,\infty)$. 
In view of Theorem \ref{thm:comparison}, 
we need only to prove that 
\begin{equation}\label{i:2-16}
U^*(x,0)\leq U_*(x,0) \ \ \text{ for all } \ x\in\bar\Om.  
\end{equation}
Indeed, once this is done, we see by Theorem \ref{thm:comparison} that 
$U^*\leq U_*$ on $\bar\Om\tim[0,\,\infty)$, which guarantees 
that $U \in C(\bar\Om\tim[0,\,\infty))$.

To show \eqref{i:2-16}, fix any $\gep>0$. 
We may select 
a function 
$u_0^\gep\in C^1(\bar\Om)$ such that $B(x,Du_0^\gep(x))\leq 0$ for all $x\in\pl\Om$ 
and 
\[
|u_0(x)-u_0^\gep(x)|\leq \gep  
\ \ \text{ for all } \ x\in\bar\Om. 
\]
Indeed, we can first approximate $u_0$ by a sequence of $C^1$ functions and then modify the normal derivative (without modifying too much the function itself) by adding a function of the form $\gep \gz (C \rho (x)/\gep)$ where $\gz$ is a $C^1$, increasing function such that $\gz(0)=0$, $\gz'(0)=1$ and $-1 \leq \gz(r)\leq 1$ for all $r\in\R$.

Then we may choose a constant $C_\gep>0$ so that 
the function $\psi(x,t):=u_0^\gep(x)-C_\gep t$ is a (classical) 
subsolution of 
\eqref{i:2-9}, \eqref{i:2-10}. Then, for any $(x,t)\in Q$ and 
$(\eta,v,l)\in\SP(x)$, we have
\[
\psi(\eta(t),0)-\psi(\eta(0),t)
=\int_0^t \big(D\psi(\eta(s),\,t-s)\cdot \dot\eta(s)
-\psi_t(\eta(s),\,t-s)\big)\d s. 
\]
Setting $p(s)=D\psi(\eta(s),\,t-s)$ and $f(s)=F(\eta,v,l)(s)$ for $s\in[0,\,t]$
and using the Fenchel-Young inequality, we observe that for a.e. $s\in[0,\,t]$, 
\[
p(s)\cdot \dot\eta(s)
-\psi_t(\eta(s),\,t-s)
\geq -L(\eta(s),\,-v(s))-f(s). 
\]
Combining these observations, we obtain 
\[
\psi(x,t)\leq \int_0^t\big(L(\eta(s),\,-v(s))+f(s)\big)\d s+u_0^\gep(x),
\]
which ensures that $U(x,t)\geq u_0(x)-2\gep-C_\gep t$ 
for all $(x,t)\in\bar\Om\tim[0,\,\infty)$, 
and moreover, $U_*(x,0)\geq u_0(x)$ for all $x\in\bar\Om$.

Next, fix any $(x,t)\in Q$ and set $\eta(s)=x$, $v(s)=0$ and  $l(s)=0$ for 
$s\geq 0$. Observe that $(\eta,v,l)\in\SP(x)$ and that $F(\eta,v,l)=0$ and 
\[
U (x,t)\leq \int_0^t L(x,\,0)\d s+u_0(x)=L(x,\,0)t+u_0(x)
\leq u_0(x)-t\min_{x\in\bar\Om}H(x,\,0). 
\] 
This shows that $U^*(x,0)\leq u_0(x)$ for all $x\in\bar\Om$. 
Thus we find that \eqref{i:2-16} is valid, which completes the proof.  
\end{proof} 
\medskip


Next we present the variational formula 
for the solution of (DBC). The basic idea of obtaining this formula is similar 
to that for (CN), 
and thus we just outline it or skip the details. 

We define the function $W$ on $Q:=\bar\Om\tim(0,,\infty)$ by
\begin{equation}\label{i:7-1}
W(x,t)=\inf\Bigl\{\int_0^\gs \big(L(\eta(s),-v(s))+f(s)\big)\d s+u_0(\eta(\gs))\Bigr\},
\end{equation}
where the infimum is taken all over $(\eta,v,l)\in\SP(x)$, 
$f=F(\eta,v,l)$,    
and $\gs\in(0,\,t]$ is given by $t=\int_0^\gs (1+l(r))\d r$. 
Then we extend the domain of definition of $W$ to $\bar Q$ by setting $W(x,0)=u_0(x)$ 
for $x\in\bar\Om$.  

In the definition of $W$ we apparently use the set $\SP$ 
(the Skorokhod problem for $\Om$ and $B$), but the underlining idea is 
to consider the Skorokhod problem for the domain $\Om\tim \R$ and the function 
$B(x,p)+q$ in place of $\Om$ and $B(x,p)$, respectively. Indeed, 
setting $\hat \Om=\Om\tim\R$, $\hat B(x,p,q)=B(x,p)+q$ and 
$\hat H(x,p,q)=H(x,p)+q$, we observe that the vector $(\tn(x),0)$ is the unit outer normal 
at $(x,t)\in\pl\hat\Om$, the conditions (A1)--(A7) are satisfied with $\hat B$ 
and $\hat \Om$ in place of $B$ and $\Om$ and the Lagrangian $\hat L$ of $\hat H$ is given by
\begin{equation}\label{i:7-2}
\hat L(x,\xi,\eta)=\sup_{(p,q)\in\R^{n+1}}(p\cdot \xi+q\eta-\hat H(x,p))
=L(x,\xi)+\gd_{\{1\}}(\eta), 
\end{equation}
where $\gd_{\{1\}}$ is the indicator function of the set $\{1\}$, i.e, 
$\gd_{\{1\}}(\eta)=0$ if $\eta=1$ and $=\infty$ if $\eta\not=1$. 
If we set for $(x,t)\in\pl \hat\Om$,   
\[
\hat\cG(x,t)=\{(\gamma,\gd,g)\in\R^n\tim\R\tim\R\mid \hat B(x,p,q)\geq \gamma\cdot p+\gd q-g 
\text{ for }(p,q)\in\R^{n+1}\},
\]
then it is easily seen that 
$\hat\cG(x,t)=\{(\gamma,1,g)\mid (\gamma,g)\in\cG(x)\}$.

The Skorokhod problem for $\hat\Om$ and $\hat B$ is to find for given $(x,t)\in\ol{\,\hat\Om\,}$, 
$T>0$ and $(v,w)\in L^1([0,\,T],\R^{n+1})$ a pair of functions  
$(\eta,\tau)\in \AC([0,\,T],\R^{n+1})$ and $l\in L^1([0,\,T],\R)$ such that 
$(\eta(0),\tau(0))=(x,t)$, $(\eta(s),\tau(s))\in\ol{\,\hat\Om\,}$ for $s\in[0,\,T]$, 
$l(s)\geq 0$ for a.e. $s\in[0,\,T]$, 
$l(s)=0$ if $(\eta(s),\tau(s))\in\hat\Om$ for a.e. $s\in[0,\,T]$,  
and $((v-\dot\eta)(s),(w-\dot\tau)(s),f(s))\in l(s)\hat\cG(\eta(s),\tau(s))$ 
for a.e. $s\in[0,\,T]$ and for some $f\in L^1([0,\,T],\R)$. 
It is easily checked that for given $(x,t)\in\ol{\,\hat\Om\,}$,  
$T>0$ and $(v,w)\in L^1([0,\,T],\R^{n+1})$, the pair of functions 
$(\eta,\tau)\in\AC([0,\,T],\R^{n+1})$ and $l\in L^1([0,\,T],\R)$ 
is a solution of the Skorokhod problem for $\hat\Om$ and $\hat B$ 
if and only if $(\eta,v,l)\in\SP_T(x)$ and $\tau(s)=t-\int_0^s(w(r)+l(r))\d r$ for all 
$s\in[0,\,T]$.  If we take into account of the form \eqref{i:7-2}, then  
we need to consider the Skorokhod problem only with $w(s)=1$. That is, 
in our minimization at $(x,t)\in Q$, we have only to consider the 
infimum all over $(\eta,v,l)\in\SP(x)$ and $\tau$ such that  
$\tau(s)=t-\int_0^s(1+l(r))\d r$ for $s\geq 0$. Note that this function 
$\tau$ is decreasing on $[0,\,\infty)$ and that 
$\tau(s)=0$ if and only if $t=\int_0^s(1+ l(r))\d r$, which justifies 
the choice of $\gs$ in \eqref{i:7-1}. 

We have the following theorems concerning (DBC). 

\begin{thm}\label{i:visco-es} 
The function $W$ is a solution of {\em(DBC)} and continuous on $\bar Q$.  
Moreover, if $u_0\in\Lip(\bar\Om)$, then $W\in\Lip(\bar Q)$.   
\end{thm}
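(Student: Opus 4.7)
The plan is to follow the structure of the proof of Theorem \ref{i:visco-en}, using the reduction described just before the theorem statement: $W(x,t)$ is the value function for a minimum-cost problem where for each Skorokhod triple $(\eta,v,l)\in\SP(x)$ the auxiliary time variable $\tau(s)=t-\int_0^s(1+l(r))\d r$ decreases at rate $1+l(s)$, and the terminal cost $u_0(\eta(\gs))$ is paid when $\tau$ first reaches $0$ (at time $\gs$). The essential tool is the dynamic programming principle: for $0<s<t$ and $\gs_s$ defined by $t-s=\int_0^{\gs_s}(1+l(r))\d r$,
\[
W(x,t)=\inf\Bigl\{\int_0^{\gs_s}\bigl(L(\eta(r),-v(r))+f(r)\bigr)\d r+W(\eta(\gs_s),s)\Bigr\},
\]
which is obtained by concatenating Skorokhod triples exactly as in the (CN) case.

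For the sub/supersolution verification I would argue by contradiction at $(\x,\t)\in Q$ as in the proof of Theorem \ref{i:visco-en}. Interior points are treated identically since reflections do not occur locally. The distinctive feature for (DBC) appears at a boundary point $\x\in\pl\Om$: assuming $W^*-\phi$ has a strict maximum there and that both $\phi_t+H(\x,D\phi)$ and $\phi_t+B(\x,D\phi)$ exceed $\ep$ in a neighborhood, I would linearize $B$ via Lemma \ref{i:selec} to select $(\gamma,g)\in \cG(\x)$ and construct a Skorokhod path using \cite[Lemma 5.5]{I2}. The key computation is the chain rule
\[
\tfrac{\d}{\d s}\phi\bigl(\eta(s),\tau(s)\bigr)=D\phi\cdot\dot\eta-(1+l)\phi_t,
\]
which couples the reflection rate $l$ with $\phi_t$. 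Combined with $\dot\eta+l\,\gamma(\eta)=v$ and the Fenchel--Young inequality, the reflection term reduces to $-l\bigl(\phi_t+\gamma\cdot D\phi-g\bigr)\leq -l\bigl(\phi_t+B(\x,D\phi)-\ep\bigr)$, contradicting the dynamic programming inequality. The supersolution property is obtained by the symmetric contradiction argument, using \eqref{i:2-7} to control the boundary reflection cost.

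Continuity up to $t=0$ follows the pattern of Theorem \ref{i:visco-en}: the upper bound $W^*(x,0)\leq u_0(x)$ is obtained from the trivial triple $(\eta,v,l)=(x,0,0)$, for which $\gs=t$ and $W(x,t)\leq L(x,0)t+u_0(x)$, while the lower bound uses an approximation $u_0^\ep\in C^1(\cO)$ with $B(x,Du_0^\ep)\leq 0$ on $\pl\Om$ (constructed via the normal-derivative correction $\ep\gz(C\rho(x)/\ep)$ exactly as in the (CN) argument) and checks that $\psi(x,t)=u_0^\ep(x)-C_\ep t$ is a (DBC)-subsolution once $C_\ep$ is large enough; the new requirement here is $\psi_t+B(x,D\psi)\leq 0$ on $\pl\Om$, which reduces to $-C_\ep+B(x,Du_0^\ep)\leq 0$ and is guaranteed by the choice of $u_0^\ep$. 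Once continuity is established, Theorem \ref{thm:comparison} ensures $W$ is the unique solution of (DBC), whence the Lipschitz regularity statement follows from Theorem \ref{thm:existence}. The main obstacle is the boundary step, where the coupling of the reflection rate $l$ with both the running cost $f$ and the time-decrement $1+l$ must be handled carefully; the linearization of $B$ provided by Lemma \ref{i:selec} is precisely what makes this coupling tractable.
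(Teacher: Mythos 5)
Your proposal is correct and follows essentially the same route as the paper, which itself only outlines the argument as an adaptation of the proof of Theorem \ref{i:visco-en}: replace the curves $(\eta(s),\bar t-s)$ by $(\eta(s),\tau(s))$ with $\tau(s)=\bar t-\int_0^s(1+l(r))\d r$, exploit the chain rule coupling $l$ with $\phi_t$, and conclude continuity and Lipschitz regularity via Theorems \ref{thm:comparison} and \ref{thm:existence}. The only refinement worth noting is that, since $\tau$ depends on the $l$ of the triple being constructed, the selection step should invoke the time-changed version of the selection lemma (Lemma \ref{i:selec-2}) rather than \cite[Lemma 5.5]{I2} verbatim, exactly as the paper indicates.
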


In the above theorem, the subsolution (resp., supersolution) 
property of (DBC) assumes as well the inequality 
$u(\cdot,\,0)\leq u_0$ (resp., $v(\cdot,\,0)\geq u_0$) on $\pl\Om$. 

We do not give here the proof of the above theorem, since one can easily adapt the proof 
of Theorems \ref{i:visco-en}, using theorems~\ref{thm:comparison} and \ref{thm:existence},
with minor modifications. A typical modification is the following: 
in the proof of the viscosity property of $W$, we have to 
replace the cures $(\eta(s),\bar t-s)$, with $s\geq 0$, 
which are used in the proof of Theorem \ref{i:visco-en},   
by the curves $(\eta(s),\tau(s))$, with $s\geq 0$, where $\tau(s):=\bar t
-\int_0^s(1+l(r))\d r$.  

A further remark on the modifications of the proof is the use of the following 
lemma in place of \cite[Lemma 5.5]{I2}. 

\begin{lem}\label{i:selec-2} Let $t>0$, $x\in\bar\Om$, $\psi\in C(\bar\Om\tim[0,\,t],\R^n)$ and $\gep>0$. 
Then there is a triple $(\eta,v,l)\in\SP(x)$ such that for a.e. $s\in(0,\,t)$, 
\[
H(\eta(s),\psi(\eta(s),\tau(s)))+L(\eta(s),-v(s))\leq\gep-v(s)\cdot \psi(\eta(s),\,\tau(s)), 
\]
where $\tau(s):=t-\int_0^s(1+l(r))\d r$ and $\psi(x,s):=\psi(x,0)$ for $s\leq 0$.   
\end{lem}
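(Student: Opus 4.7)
The plan is to adapt the selection-and-Skorokhod construction of \cite[Lemma 5.5]{I2} to the present situation, where the time parameter $\tau(s)=t-\int_0^s(1+l(r))\d r$ depends implicitly on the local-time density $l$, rather than being the fixed affine function $\bar t-s$ used there.

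As a preliminary, I would extend $\psi$ continuously to $\bar\Om\tim\R$ by the prescribed rule $\psi(x,s)=\psi(x,0)$ for $s\leq 0$ and, say, $\psi(x,s)=\psi(x,t)$ for $s\geq t$, so that compositions along curves $\tau(\cdot)$ taking values in $\R$ are well defined. Next, exploiting the Fenchel--Young identity
\[
L(x,-v)+H(x,p)+v\cdot p\geq 0,\qquad\text{with equality iff } -v\in\Dmp H(x,p),
\]
together with the coercivity (A1) of $H$ and the continuity (A2), a standard mollification / partition-of-unity argument produces a \emph{continuous} vector field $\go\colon\bar\Om\tim\R\to\R^n$, bounded on $\bar\Om\tim[0,t]$, such that
\[
L(x,-\go(x,s))+H(x,\psi(x,s))+\go(x,s)\cdot\psi(x,s)\leq \gep/2\quad\text{for all }(x,s)\in\bar\Om\tim\R.
\]

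The main step is to solve the implicit closed-loop system: find $(\eta,v,l)\in\SP(x)$ with $v(s)=\go(\eta(s),\tau(s))$ and $\tau(s)=t-\int_0^s(1+l(r))\d r$. I would discretize: fix a partition $0=s_0<s_1<\cdots<s_N=t$ of mesh $\Delta$, and recursively on $[s_k,s_{k+1}]$ take $v^{\Delta}\equiv\go(\eta^{\Delta}(s_k),\tau^{\Delta}(s_k))$, invoke Theorem~\ref{i:skor} to produce $(\eta^{\Delta},l^{\Delta})$ on that interval, then update $\tau^{\Delta}$ by the integral formula. Theorem~\ref{i:skor} gives uniform bounds $|\dot\eta^{\Delta}|,l^{\Delta}\leq C|v^{\Delta}|\leq C\|\go\|_{\infty}$, whence Arzel\`a--Ascoli compactness for $(\eta^{\Delta},\tau^{\Delta})$ and weak-$*$ $L^\infty$ compactness for $(v^{\Delta},l^{\Delta})$. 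The convexity of $\cG(x)$ and lower semicontinuity of $G(x,\cdot)$ let the Skorokhod relation \eqref{i:2-5} pass to the weak limit, yielding a limiting triple in $\SP(x)$ with $\tau(s)=t-\int_0^s(1+l(r))\d r$; uniform continuity of $\go$ on $\bar\Om\tim[0,t]$ then gives $v(s)=\go(\eta(s),\tau(s))$ for a.e.\ $s$.

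Inserting this identity into the selection inequality along the trajectory yields the desired estimate, with the $\gep/2$ slack chosen in the selection step absorbing the small discretization error. The main obstacle is precisely the implicit coupling: $v$ is prescribed through $\tau$, $\tau$ through $l$, and $l$ through the Skorokhod relation driven by $v$. Unlike in \cite[Lemma 5.5]{I2}, where $\tau(s)=\bar t-s$ is given a priori and decouples the construction, here one must carefully check that the Arzel\`a--Ascoli / weak-$L^\infty$ limits preserve both the algebraic Skorokhod constraint and the pointwise feedback identity a.e.; this is where the continuity of $\go$ and the a priori bounds of Theorem~\ref{i:skor} do all the work.
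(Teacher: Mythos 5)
Your overall architecture is the intended one: the paper itself gives no proof of this lemma, deferring to an adaptation of \cite[Lemma 5.5]{I2}, and your closed-loop discretization (freeze the feedback on a small time interval, solve the Skorokhod problem there via Theorem \ref{i:skor}, concatenate, then pass to the limit by Arzel\`a--Ascoli and weak-$*$ compactness, recovering the relation \eqref{i:2-5} in the limit exactly as in the proof of Theorem \ref{i:exist-extremal-en}) is a sound way to handle the extra coupling between $v$, $l$ and $\tau$, which is indeed the only new feature compared with \cite[Lemma 5.5]{I2}.

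The one genuine weak point is the step you dismiss as ``a standard mollification / partition-of-unity argument'': producing a \emph{continuous, bounded} feedback $\go$ with $L(x,-\go(x,s))+H(x,\psi(x,s))+\go(x,s)\cdot\psi(x,s)\leq\gep/2$ everywhere. The naive patching of pointwise selections fails. If you pick $-v_0\in\Dmp H(x_0,\psi(x_0,s_0))$, the required inequality at a nearby point $x$ reads
\[
\sup_{p\in\R^n}\big(-v_0\cdot p-H(x,p)\big)\leq \gep-H(x,\psi(x,s))-v_0\cdot\psi(x,s),
\]
and the left-hand side, $L(x,-v_0)$, is only \emph{lower} semicontinuous in $x$: it can jump up, even to $+\infty$, arbitrarily close to $x_0$ (e.g.\ for Hamiltonians with linear growth such as $H(x,p)=|p|+a(x)\cdot p$, where $\mathrm{dom}\,L(x,\cdot)$ is a ball moving with $x$ and the exact subgradient sits on its boundary). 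So near-optimality at the base points does not propagate to a neighborhood, and a partition of unity cannot repair this. The correct ``mollification'' is of $H$ itself in the $p$-variable, exactly parallel to what the paper does for $B$ in Lemma \ref{i:selec}: set $H_\gd(x,p)=\inf_q\big(H(x,q)+\tfrac1{2\gd}|p-q|^2\big)$ and take $\go(x,s):=-D_pH_\gd(x,\psi(x,s))$, which is continuous and bounded on $\bar\Om\tim[0,t]$. Since $H_\gd\leq H$ and $H_\gd(x,\cdot)$ is convex, $H(x,p)\geq H_\gd(x,\psi(x,s))-\go(x,s)\cdot(p-\psi(x,s))$ for \emph{all} $p$, hence $L(x,-\go(x,s))\leq -H_\gd(x,\psi(x,s))-\go(x,s)\cdot\psi(x,s)$ exactly, and the error is $H(x,\psi(x,s))-H_\gd(x,\psi(x,s))\leq\gep/2$ for $\gd$ small, uniformly on the compact range of $(x,\psi(x,s))$. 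With this replacement (and the harmless remark that only the extension $\psi(x,s)=\psi(x,0)$ for $s\leq 0$ is ever needed, since $\tau\leq t$), your argument goes through; the final inequality is then a pointwise consequence of the identity $v(s)=\go(\eta(s),\tau(s))$ and needs no limiting procedure of its own.
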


The above lemma can be proved in a parallel fashion as in the proof of 
\cite[Lemma 5.5]{I2}, and we leave it to the reader to prove the lemma.

\subsection{Extremal curves or optimal controls} 

In this section we establish the existence of extremal curves (or  
optimal controls) $(\eta,v,l)\in\SP$ for the variational formula \eqref{i:2-8}.  
We set $Q=\bar\Om\tim(0,\,\infty)$. 
 
\begin{thm}\label{i:exist-extremal-en} 
Let $u_0\in \Lip(\bar\Om)$  
and let $u\in \Lip(Q)$ be the unique solution of 
{\em(CN)}. Let $(x,t)\in Q$. 
Then there exists 
a triple $(\eta,\,v,\,l)\in\SP_t(x)$ such that 
\[
u(x,t)=\int_0^t \big(L(\eta(s),\,-v(s))+f(s)\big)\d s+u_0(\eta(t)), 
\]
where $f=F(\eta,v,l)$. Moreover, 
$\eta\in \Lip([0,\,t],\R^n)$ and $(v,l,f)\in L^\infty([0,\,t],\R^{n+2})$. 
\end{thm}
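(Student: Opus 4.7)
My plan is to realize the infimum in the variational formula of Theorem~\ref{i:visco-en} by the direct method in the calculus of variations. I would start from a minimizing sequence $(\eta_k,v_k,l_k)\in\SP_t(x)$ with associated cost $f_k:=F(\eta_k,v_k,l_k)$ and actions
\[
J_k:=\int_0^t(L(\eta_k(s),-v_k(s))+f_k(s))\d s+u_0(\eta_k(t))
\]
converging to $u(x,t)$, and then extract a subsequence whose limit lies in $\SP_t(x)$, achieves the infimum, and enjoys the announced $\Lip$/$\Li$ regularity.

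The first technical point is to obtain uniform $\Li$-bounds on the $v_k$. Here the Lipschitz regularity of $u$ granted by Theorem~\ref{thm:existence} (since $u_0\in\Lip(\bar\Om)$) is essential. Let $R$ be a common Lipschitz bound for $u$ and $u_0$. The coercivity (A1) yields, by convex duality, a superlinear lower bound $L(x,\xi)\geq\Phi(|\xi|)-C_0$ with $\Phi(r)/r\to\infty$, while (A4) gives $f_k(s)\geq -M_B\,l_k(s)$. Combining these bounds with the dynamic programming inequality applied on short sub-intervals of $[0,t]$ and the Lipschitz control $|u(x,s)-u(y,\tau)|\leq R(|x-y|+|s-\tau|)$, one shows that the sequence may be chosen with $\|v_k\|_{\Li}\leq M$ for a constant $M=M(R,H,B)$; the idea is that any sub-interval on which $|v_k|$ is excessively large can be replaced by straight-line motion at bounded speed, which is cheaper up to an $o_k(1)$ penalty. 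Lemma~\ref{i:bound-by-v} then yields uniform $\Li$-bounds on $\dot\eta_k$ and $l_k$, and in turn on $f_k$ via the upper bound on $G(\eta_k(s),\cdot)$ over the compact set $\gG(\eta_k(s))$.

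With these bounds, Arzel\`a--Ascoli produces a uniform limit $\eta_k\to\eta\in\Lip([0,t],\bar\Om)$, and Banach--Alaoglu yields weak-$*$ limits $v,l,f\in\Li$ of $v_k,l_k,f_k$ (and of $\dot\eta_k$ to $\dot\eta$). Lower semicontinuity of $J_k$ then reduces to three standard ingredients: the Ioffe--Olech theorem applied to the convex, continuous, superlinear integrand $L(\eta(s),-v(s))$; weak-$*$ lower semicontinuity of $\int f_k\d s$; and continuity of $u_0\circ\eta_k(t)$. Combined with $J_k\to u(x,t)$ and the variational inequality from Theorem~\ref{i:visco-en}, this gives optimality as soon as the limit triple is shown to belong to $\SP_t(x)$.

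The main obstacle is therefore the passage to the limit in the Skorokhod constraint $((v-\dot\eta)(s),f(s))\in l(s)\cG(\eta(s))$, whose right-hand side is a nonlinear function of the weakly converging triple $(v_k,l_k,\eta_k)$. To handle it I would use the dual linear characterization \eqref{i:2-7}: the inclusion is equivalent to the family of scalar inequalities $(v_k-\dot\eta_k)(s)\cdot p-l_k(s)B(\eta_k(s),p)\leq f_k(s)$ for every $p\in\R^n$. For each fixed $p$ in a countable dense subset of $\R^n$, the left-hand side converges in the weak-$*$ sense to $(v-\dot\eta)(s)\cdot p-l(s)B(\eta(s),p)$ by uniform convergence of $\eta_k$ together with weak-$*$ convergence of $v_k,\dot\eta_k,l_k$, and the inequality is preserved in the limit by testing against nonnegative $L^1$ functions. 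Density in $p$ plus continuity of $B$ in $p$ recovers the full Skorokhod inclusion a.e. The complementary condition $l(s)=0$ on $\{\eta(s)\in\Om\}$ passes to the limit from openness of $\Om$ and uniform convergence of $\eta_k$, while $\eta(0)=x$ and $\eta(s)\in\bar\Om$ are immediate. This completes the extraction of the extremal triple $(\eta,v,l)$ with the required regularity.
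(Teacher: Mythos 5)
Your overall architecture (minimizing sequence, weak compactness, lower semicontinuity of the action, and the dual passage to the limit in the Skorokhod inclusion by testing the linear inequalities $(v_k-\dot\eta_k)(s)\cdot p-l_k(s)B(\eta_k(s),p)\leq f_k(s)$ against nonnegative functions) is the same as the paper's. But your compactness step contains two genuine gaps. First, the claim that the minimizing sequence ``may be chosen'' with $\|v_k\|_{\Li}\leq M$ by replacing sub-intervals where $|v_k|$ is large with ``straight-line motion at bounded speed'' does not hold up: $\bar\Om$ need not be convex, so the segment joining $\eta_k(a)$ to $\eta_k(b)$ can leave $\bar\Om$; a replacement must join the same endpoints in the same time, so its speed cannot be taken bounded a priori on short sub-intervals; and the asserted cost comparison (in particular the boundary term $f$ of the modified triple) is not justified. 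The paper avoids any such surgery: it proves only \emph{uniform integrability} of $\{v_k\}$, from the coercivity inequality $A|v_k(s)|+|f_k(s)|\leq L(\eta_k(s),-v_k(s))+f_k(s)+C(2C_1+A)$ combined with the near-minimality bound on the total action, and of $\{f_k\}$, from the dynamic programming principle on finite unions of sub-intervals together with the Lipschitz bound of $u$; it then extracts limits by the Dunford--Pettis theorem (weak $L^1$ compactness). The $\Li$ and $\Lip$ bounds claimed in the statement are obtained only \emph{a posteriori}, for the optimal triple, by applying the DPP on an arbitrary sub-interval $[\ga,\gb]$ and the same coercivity inequality, which yields $\int_\ga^\gb(|v(s)|+|f(s)|)\d s\leq C(\gb-\ga)$; this argument uses optimality and is not available for the minimizing sequence itself.

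Second, the proposed bound on $f_k$ ``via the upper bound on $G(\eta_k(s),\cdot)$ over the compact set $\gG(\eta_k(s))$'' is not available: under (A3)--(A5) the conjugate $G(x,\cdot)$ is lower semicontinuous and bounded below, but it can blow up at the boundary of its effective domain (take, in the tangential variable, a Lipschitz convex $B$ whose conjugate tends to $+\infty$ at the edge of its domain), so pointwise bounds on $(v_k,l_k,\dot\eta_k)$ do \emph{not} bound $f_k=F(\eta_k,v_k,l_k)$; the upper control on $f_k$ must come from the action bound, i.e.\ from near-optimality, which is exactly how the paper proceeds. A smaller omission: after the limit passage you only obtain $(v-\dot\eta,f)\in l\,\cG(\eta)$ a.e., i.e.\ $f\geq F(\eta,v,l)$, while the statement requires $f=F(\eta,v,l)$; the paper closes this by the squeeze argument comparing with the variational formula \eqref{i:2-8}, which forces equality a.e.\ and simultaneously the optimality of the limit triple. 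With these repairs (uniform integrability plus Dunford--Pettis in place of $\Li$ bounds, upper control of $f_k$ via the DPP, and the final identification of $f$), your outline becomes the paper's proof.
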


\begin{proof}Fix $(x,t)\in\bar\Om$. 
In view of formula \eqref{i:2-8}, we may choose a 
sequence $\{(\eta_k,\,v_k,\,l_k)\}\subset\SP_t(x)$ such that for $k\in\N$, 
\begin{equation} \label{i:3-1}
u(x,t)+\fr 1k>\int_0^t\big(L(\eta_k(s),-v_k(s))+f_k(s)\big)\d s 
+u_0(\eta_k(t)), 
\end{equation}
where $f_k:=F(\eta_k,v_k,l_k)$. 

We show that the sequence $\{v_k\}$ is uniformly integrable 
on $[0,\,t]$. Once this is done, 
due to Lemma \ref{i:bound-by-v}, the sequences $\{\dot\eta_k\}$ and $\{l_k\}$ 
are also uniformly integrable on $[0,\,t]$. 
If we choose a constant $C_0>0$ so that $C_0\geq \max_{\pl\Om}B(x,0)$, then 
$G(x,\xi)\geq -C_0$ for all $(x,\xi)\in\pl\Om\tim\R^n$ and hence,  
$f_k(s)\geq -C_0l_k(s)$ for a.e. $s\in[0,\,t]$.    
Due to Lemma \ref{i:bound-by-v}, there is a constant $C_1>0$, independent of $k$, such that 
$f_k(s)\geq -C_1|v_k(s)|$ for a.e. $s\in[0,\,t]$, which implies that 
$|f_k(s)|\leq f_k(s)+2C_1|v_k(s)|$ for a.e. $s\in[0,\,t]$.  
Since $H$ is coercive, for each $A\geq 0$ there exists a constant $C(A)>0$ such that
$L(x,\xi)\geq A|\xi|-C(A)$ for all $(x,\xi)\in\bar\Om\tim\R^n$. 
Combining these two estimates, we get for all $A\geq 0$,  
\begin{equation}\label{i:3-x}
A|v_k(s)|+|f_k(s)|\leq L(\eta_k(s),-v_k(s))+f_k(s)+C(2C_1+A). 
\end{equation}

We fix any $A>0$ and measurable $E\subset[0,\,t]$, and, using the above estimate 
with $A=0$ and $A=A$, 
observe that
\begin{align*}
&\int_E\big(L(\eta_k(s),-v_k(s))+f_k(s)+C(C_1)\big)\d s\\
\leq&\,
\int_0^t\big(L(\eta_k(s),-v_k(s))+f_k(s)+C(C_1)\big)\d s \\
\leq&\, 
u(x,t)-u_0(\eta_k(t))+\fr 1k+C(C_1)t, 
\end{align*}
and hence, 
\begin{align*}
A\int_E|v_k(s)|\d s
\leq\aln \int_E(L(\eta_k(s),-v_k(s))+f_k(s))\d s+C(2C_1+A)|E|\\
\leq \aln2\max_{\bar\Om\tim[0,\,t]}|u|+1+C(C_1)t+C(2C_1+A)|E|,
\end{align*}
where $|E|$ denotes the Lebesgue measure of $E$. 
From this, we easily deduce 
that $\{v_k\}$ is uniformly integrable on $[0,\,t]$.  
Thus, the sequences $\{\dot\eta_k\}$, 
$\{v_k\}$ and $\{l_k\}$ are uniformly 
integrable on $[0,\,t]$.   

Next, we show that $\{f_k\}$ is uniformly integrable on $[0,\,t]$. 
To this end, we fix two finite sequences $\{\ga_j\}$ and $\{\gb_j\}$
so that
\[
0\leq\ga_1<\gb_1\leq\ga_2<\gb_2\leq \cdots\leq \ga_m<\gb_m\leq t. 
\]
Set $\gb_0=0$ and $\ga_{m+1}=t$. 
In view of the dynamic programming principle, we have for $j=0,1,2,...,m$, 
\[
u(\eta_k(\gb_j),t-\gb_j)\leq u(\eta_k(\ga_{j+1}),t-\ga_{j+1}) 
+\int_{\gb_j}^{\ga_{j+1}}\big(L(\eta_k(s),\,-v_k(s))+f_k(s)\big)\d s. 
\]
Subtracting these from \eqref{i:3-1} yields
\begin{equation}\label{i:3-2}
\begin{aligned}
\fr 1k-\sum_{j=1}^m u(\eta_k(\gb_j), t-\gb_j)>\aln 
-\sum_{j=1}^m u(\eta_k(\ga_j), t-\ga_j) \\ 
\aln +\sum_{j=1}^m\int_{\ga_j}^{\gb_j}\big(L(\eta_k(s),-v_k(s))+f_k(s)\big)\d s. 
\end{aligned}
\end{equation}
Hence, if $K>0$ is a Lipschitz bound of $u$, then we get  
\[
\sum_{j=1}^m\int_{\ga_j}^{\gb_j}\big(L(\eta_k(s),-v_k(s))+f_k(s)\big)\d s
\leq\fr 1k+ K\sum_{j=1}^m\big(|\eta_k(\gb_j)-\eta_k(\ga_j)|+|\gb_j-\ga_j|\big). 
\]
Now, using \eqref{i:3-x} with $A=0$, we find that
\[
\sum_{j=1}^m\int_{\ga_j}^{\gb_j}|f_k(s)|\d s 
\leq \fr 1k +\sum_{j=1}^m\int_{\ga_j}^{\gb_j}\big(K|\dot\eta_k(s)|+K+C(2C_1)\big)\d s,
\]
from which we infer that $\{f_k\}$ is uniformly integrable on $[0,\,t]$. 

We apply the Dunford-Pettis theorem to the sequence $\{(\dot\eta_k,v_k,l_k,f_k)\}$, 
to find an increasing sequence $\{k_j\}\subset\N$ and functions 
$h,\, v\in L^1([0,\,t],\R^n)$, 
$l,\,f\in L^1([0,\,t],\R)$ such that, as $j\to \infty$, \ 
$(\dot \eta_{k_j},v_{k_j},l_{k_j}, f_{k_j})\to (h,v,l,f)$ \  
weakly in $L^1([0,\,t],\R^{2n+2})$. Setting 
$\eta(s)=x+\int_0^s h(r)\d r$ for $s\in[0,\,t]$, we have 
$\eta_{k_j}(s)\to\eta(s)$ uniformly on $[0,\,t]$ as $j\to \infty$. 
Then, as in the last half of the proof of \cite[Lemma 7.1]{I2}, 
we infer that 
\[
\int_0^t L(\eta(s),-v(s))\d s
\leq \liminf_{j\to\infty}\int_0^t L(\eta_{k_j}(s),-v_{k_j}(s))\d s. 
\]
It is now obvious that
\begin{equation}\label{i:3-3}
u(x,t)
\geq 
\int_0^t\big(L(\eta(s),-v(s))+f(s)\big)\d s+u_0(\eta(t)). 
\end{equation}

Now, we show that $(\eta,v,l)\in\SP_t(x)$. It is clear that $\eta(s)\in\bar\Om$ 
for all $s\in[0,\,t]$ and $l(s)\geq 0$ for a.e. $s\in[0,\,t]$. 
It is thus enough to show that $(v(s)-\dot\eta(s),\,f(s))\in l(s)\cG(\eta(s))$ 
for a.e. $s\in[0,\,t]$. 
Setting $\xi_k:=v_k-\dot\eta_k$ \  and  \ $\xi:=v-\dot\eta$ \  on $[0,\,t]$,
we have 
\[
l_k(s)B(\eta_k(s),\,p)\geq \xi_k(s)\cdot p-f_k(s) \ \ 
\text{ for all }p\in\R^n \text{ and  a.e. }s\in [0,\,t],
\]
Let $\phi\in C([0,\,t],\R)$ satisfy $\phi(s)\geq 0$ for all $s\in[0,\,t]$. 
We have
\[ 
\int_0^t\phi(s)\left(l_k(s)B(\eta_k(s),\,p)-\xi_k(s)\cdot p+f_k(s)\right)\d s
\geq 0 
\ \ \text{ for all }p\in\R^n. 
\]
Sending $k\to\infty$ along the subsequence $k=k_j$,  
we find that 
\[ 
\int_0^t\phi(s)\left(l(s)B(\eta(s),\,p)-\xi(s)\cdot p+f(s)\right)\d s 
\geq 0 \ \ \text{ for }p\in\R^n. 
\]
This implies that 
$(\xi(s),\, f(s))\in l(s)\cG(\eta(s))$ for a.e. $s\in[0,\,t]$, 
and conclude that $(\eta,\,v,\, l)\in\SP_t(x)$. 

Next, we set $\tilde f(s)=F(\eta,v,l)(s)$ for $s\in[0,\,t]$. 
Since $(v(s)-\dot\eta(s),\,f(s))\in l(s)\cG(\eta(s))$ for a.e. $s\in[0,\,t]$, 
we see that $\tilde f(s)\leq f(s)$ for a.e. $s\in[0,\,t]$ and $\tilde f 
\in L^1([0,\,t],\R)$. Using \eqref{i:3-3} and \eqref{i:2-8}, we get  
\begin{align*}
u(x,t)
\geq\aln \int_0^t\big(L(\eta(s),-v(s))+ f(s)\big)\d s+u_0(\eta(t))\\
\geq \aln \int_0^t\big(L(\eta(s),-v(s))+\tilde f(s)\big)\d s+u_0(\eta(t))\geq u(x,t). 
\end{align*}
Therefore, we have $f(s)=\tilde f(s)$ for a.e. $s\in[0,\,t]$ and 
\[
u(x,t)=\int_0^t\big(L(\eta(s),-v(s))+ f(s)\big)\d s+u_0(\eta(t)). 
\]
  
Finally, we check the regularity of the triple $(\eta,v,l)\in\SP_t(x)$ and the function $f$.
Fix any interval $[\ga,\,\gb]\subset[0,\,t]$, and observe as in \eqref{i:3-2} that
\begin{align*}
\int_\ga^\gb\big(L(\eta(s),-v(s))+f(s)\big)\d s\leq\aln u(\eta(\gb),t-\gb)
-u(\eta(\ga),t-\ga)\\
\leq\aln K\int_\ga^\gb |\dot\eta(s)|\d s+K(\gb-\ga). 
\end{align*}
Here we may choose a constant $C_3>0$ so that $|\dot\eta(s)|\leq C_3|v_k(s)|$ 
for a.e. $s\in[0,\,t]$. 
Combining the above 
and \eqref{i:3-1}, with $(\eta,v,f)$ in place of $(\eta_k,v_k,f_k)$ 
and $A=KC_3+1$, and setting $C_4=C(2C_1+KC_3+1)$,   
we get  
\[
\int_\ga^\gb (|v(s)|+|f(s)|)\d s\leq (K+C_4)(\gb-\ga),
\]
from which we conclude that $(v,f)\in L^\infty([0,\,t],\R^{n+1})$ as well as 
$(\dot\eta,l)\in L^\infty([0,\,t],\R^{n+1})$.   
\end{proof}

An immediate consequence of the previous theorem is the following. 

\begin{thm}\label{i:exist-extremal-sn}
Let $\phi\in\Lip(\bar\Om)$ be a solution of 
{\em(E1)}, with $a=0$. Let $x\in\bar\Om$. 
Then there is a triple $(\eta,\,v,\,l)\in\SP(x)$ such that for any $t>0$,
\begin{equation}\label{i:3-4}
\phi(x)-\phi(\eta(t))=\int_0^t \big(L(\eta(s),-v(s))+f(s)\big)\d s,  
\end{equation}
where $f:=F(\eta,v,l)$. 
Moreover, $\eta\in\Lip([0,\,\infty),\R^n)$ and $(v,l,f)\in L^\infty([0,\,\infty),\R^{n+2})$. 
\end{thm}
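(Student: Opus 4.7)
The plan is to reduce Theorem~\ref{i:exist-extremal-sn} to Theorem~\ref{i:exist-extremal-en} by taking the initial datum to be $\phi$ itself, and then to pass to the infinite-horizon limit in the corresponding finite-horizon extremals by a diagonal compactness argument.

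First I observe that since $\phi$ solves (E1) with $a=0$, the function $u(x,t):=\phi(x)$ is the unique Lipschitz solution of (CN) with initial datum $\phi$. Applying Theorem~\ref{i:exist-extremal-en} to this $u$ with horizon $t=n$ for each $n\in\N$ will produce triples $(\eta_n,v_n,l_n)\in\SP_n(x)$, with $f_n:=F(\eta_n,v_n,l_n)$, satisfying
\[
\phi(x)=\int_0^n\bigl(L(\eta_n(s),-v_n(s))+f_n(s)\bigr)\,\d s+\phi(\eta_n(n)).
\]
Inspecting the final paragraph of the proof of Theorem~\ref{i:exist-extremal-en}, the associated $L^\infty$-bounds on $(\dot\eta_n,v_n,l_n,f_n)$ depend only on $\Lip\phi$ and on constants fixed by $H$ and $B$, and are therefore uniform in $n$.

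Next I extend the extremality identity to every intermediate time by the dynamic programming principle. For $t\in[0,n]$, since $u\equiv\phi$, the inequality $U\le\cdots$ from definition \eqref{i:2-8} yields both
\[
\phi(x)\le\int_0^t(L+f_n)\,\d s+\phi(\eta_n(t)),\qquad \phi(\eta_n(t))\le\int_t^n(L+f_n)\,\d s+\phi(\eta_n(n)),
\]
and summing these with the extremality at $s=n$ forces equality in both, so that
\[
\phi(x)-\phi(\eta_n(t))=\int_0^t\bigl(L(\eta_n(s),-v_n(s))+f_n(s)\bigr)\,\d s\qquad\text{for every }t\in[0,n].
\]
After extending each triple trivially past $s=n$, the uniform bounds allow Arzel\`a--Ascoli on $\{\eta_n\}$ together with weak-$*$ $L^\infty$ compactness on $\{(v_n,l_n,f_n)\}$ over every compact subinterval of $[0,\infty)$, and a standard diagonal extraction then yields a limit $(\eta,v,l,f)$ with $\eta\in\Lip([0,\infty),\R^n)$ and $(v,l,f)\in L^\infty([0,\infty),\R^{n+2})$. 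Verifying that $(\eta,v,l)\in\SP(x)$ and that $f=F(\eta,v,l)$ will proceed exactly as in the corresponding passage-to-the-limit step of Theorem~\ref{i:exist-extremal-en}, using the lower semicontinuity of $L$ and the closed convex graph of the inclusion $(\xi,g)\in l\,\cG(y)$. Passing to the limit in the displayed identity then produces the inequality $\phi(x)-\phi(\eta(t))\ge\int_0^t(L+f)\,\d s$, while the reverse inequality is immediate from the identity $U(\cdot,t)=\phi$ together with \eqref{i:2-8}.

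The main difficulty I anticipate is confirming that the $L^\infty$-bounds in Theorem~\ref{i:exist-extremal-en} are genuinely uniform in the horizon $n$ (rather than just finite for each fixed $n$), and then correctly passing through the nonlinear Skorokhod constraint $(\xi,g)\in l\,\cG(y)$ when taking weak limits; once these points are handled, the regularity assertions in the theorem drop out as automatic byproducts of the same uniform bounds.
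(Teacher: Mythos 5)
Your proposal is correct, but it constructs the infinite-horizon extremal differently from the paper. The paper also starts from the observation that $u(x,t):=\phi(x)$ is a (stationary) solution of (CN) and invokes Theorem \ref{i:exist-extremal-en}, but only on the unit interval: it produces an extremal triple on $[0,1]$ starting at $x$, then inductively an extremal triple on $[0,1]$ starting at $\eta_{j-1}(1)$, and simply concatenates these pieces to get $(\eta,v,l)\in\SP(x)$; the identity \eqref{i:3-4} follows from the per-interval identities (plus the same dynamic-programming observation you use for intermediate times), and the uniform $L^\infty$ bounds come from the fact that each application of Theorem \ref{i:exist-extremal-en} involves the same Lipschitz constant $\Lip\phi$, so no limit passage is needed at all. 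You instead take extremals on the growing horizons $[0,n]$, prove the intermediate-time equality by splitting and summing the dynamic-programming inequalities, and then extract a limit triple by Arzel\`a--Ascoli plus weak-$*$ compactness and a diagonal argument, re-running the passage-to-the-limit step of Theorem \ref{i:exist-extremal-en} (lower semicontinuity of the Lagrangian term, the test-function argument for the constraint $(v-\dot\eta,f)\in l\,\cG(\eta)$, and the optimality argument forcing $f=F(\eta,v,l)$). Both routes hinge on the same key point you flagged: the bound in the last paragraph of the proof of Theorem \ref{i:exist-extremal-en} is of the form $\int_\ga^\gb(|v|+|f|)\leq (K+C_4)(\gb-\ga)$ with $K$ a Lipschitz bound of the solution and $C_4$ depending only on $H$, $B$ and the constants of Lemma \ref{i:bound-by-v}, hence it depends only on $\Lip\phi$ and is uniform in the horizon and the base point. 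The paper's concatenation buys a shorter argument with no second weak-limit verification; your scheme is heavier in that respect but is a routine and correct alternative, and the closing step (liminf inequality from lower semicontinuity, reverse inequality from \eqref{i:2-8} with $U(\cdot,t)=\phi$) is exactly right.
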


\begin{proof}Note that the function $u(x,t):=\phi(x)$ is a solution of 
(CN). Using Theorem \ref{i:exist-extremal-en}, we define 
inductively the sequence 
$\{(\eta_k,\,v_k,\,l_k)\}_{k\geq 0}\subset\SP$ as follows. We first choose 
a $(\eta_0,\,v_0,\,l_0)\in\SP(x)$ 
so that
\[
\phi(\eta_0(0))-\phi(\eta_0(1))=\int_0^1\big(L(\eta_0(s),-v_0(s))+F(\eta_0,v_0,l_0)(s)\big)\d s.
\]
Next, we assume that $\{(\eta_k,\,v_k,\,l_k)\}_{k\leq j-1}$, with $j\geq 1$, 
is given, and choose  
a $(\eta_j,\,v_j,\,l_j)\in\SP(\eta_{j-1}(1))$ so that
\[
\phi(\eta_j(1))-\phi(\eta_j(0))=\int_0^1\big(L(\eta_j(s),-v_j(s))
+F(\eta_j,v_j,l_j)(s)\big)\d s. 
\]
Once the sequence $\{(\eta_k,\,v_k,\,l_k)\}_{k\geq 0}\subset \SP$ is given, we define the 
$(\eta,\,v,\,l)\in\SP(x)$ and the function $f$ on $[0,\,\infty)$ by setting 
for $k\in\N\cup\{0\}$ and $s\in[0,\,1)$,  
\[
(\eta(s+k),\,v_k(s+k),\,l(s+k),\,f(s+k))
=(\eta_k(s),\,v_k(s),\,l_k(s),\,F(\eta_k,v_k,l_k)(s)).
\] 
It is clear that $(\eta,v,l)\in\SP(x)$, $f=F(\eta,v,l)$ and \eqref{i:3-4} 
is satisfied.    
Thanks to Theorem \ref{i:exist-extremal-en},  
we have $\eta_k\in \Lip([0,\,1],\R^n)$ and 
$(v_k,\,l_k,\,f_k)\in L^\infty([0,\,1],\R^{n+2})$ for $k\geq 0$. 
Moreover, in  view of the proof of Theorem \ref{i:exist-extremal-en}, we see easily that 
$\sup_{k\geq 0}\|(v_k,f_k)\|_{\Li([0,\,1])}<\infty$, 
from which we conclude that 
$(\eta,\,v,\,l,f)\in \Lip([0,\,\infty),\R^n)\tim L^\infty([0,\,\infty),\R^{n+2})$.    
\end{proof}

\subsection{Derivatives of subsolutions along curves}

Throughout this section we fix a subsolution $ u\in \USC(\bar\Om)$ of (E1) 
with $a=0$, 
$0<T<\infty$ and a Lipschitz curve $\eta$ in $\bar\Om$, i.e., 
$\eta\in \Lip([0,\,T],\R^n)$ and $\eta([0,\,T])\subset\bar\Om$. 

Henceforth in this section 
we assume that there is a bounded, open neighborhood $V$ 
of $\pl\Om$ 
for which $H$, $B$ and $n$ are defined and continuous  
on $(\Om\cup \bar V)\tim \R^n$, $\bar V\tim\R^n$ and $\bar V$, respectively.  
Moreover, we assume by replacing $\theta$ and $M_{B}$ in (A3), (A4) 
respectively 
by other positive numbers if needed that  
(A1), with $V$ in place of $\Om$, 
and (A3)--(A5), with $V$ in place of $\pl\Om$, are satisfied.  
(Of course, these are not real additional assumptions.)

\begin{thm}\label{i:exist-p-n} There exists a function $p\in L^\infty([0,\,T],\R^n)$ 
such that for a.e. $t\in[0,\,T]$,  
$\fr{\d}{\d t} u\circ\eta(t) 
=p(t)\cdot\dot\eta(t)$, \ 
$H(\eta(t),p(t))\leq 0$, \ and  \ 
$B(\eta(t),p(t))\leq 0$ \ if $\eta(t)\in \pl\Om$. 
\end{thm}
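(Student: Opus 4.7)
The plan is to regularize $u$ by sup-convolution in $x$, extract the momentum along $\eta$ from the superdifferentials of the regularization via the chain rule for semiconvex functions, and then pass to a weak-$*$ limit in $L^\infty$. Handling the boundary inequality $B\leq 0$ will be the main difficulty and will require the obliqueness (A3) together with a tangentiality argument for $\dot\eta$ on $\partial\Om$.

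I would first observe that by coercivity (A1) the subsolution $u$ is automatically Lipschitz on $\bar\Om$ with Lipschitz constant $L:=\sup\{|p|\mid\exists x\in\bar\Om,\, H(x,p)\leq 0\}<\infty$; hence $u\circ\eta$ is Lipschitz on $[0,T]$ and differentiable a.e. I then form $u^\ep(x):=\sup_{y\in\bar\Om\cup\bar V}(u(y)-|x-y|^2/(2\ep))$. The standard sup-convolution lemma gives $u^\ep\to u$ uniformly, $u^\ep$ is semiconvex with nonempty superdifferentials $D^+u^\ep(x)$ contained in $\bar B_{L+o(1)}$, and $u^\ep$ is a viscosity subsolution of the problem with $H$ and $B$ perturbed by moduli $\om(\ep)\to 0$. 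By the chain rule for semiconvex functions there is a measurable selection $p_\ep(t)\in D^+u^\ep(\eta(t))$ with $(u^\ep\circ\eta)'(t)=p_\ep(t)\cdot\dot\eta(t)$ a.e. Since $\{p_\ep\}$ is uniformly bounded, a subsequential weak-$*$ limit $p\in L^\infty([0,T],\R^n)$ exists, yielding $(u\circ\eta)'(t)=p(t)\cdot\dot\eta(t)$ a.e. Combining the a.e.\ inequality $H_\ep(\eta(t),p_\ep(t))\leq\om(\ep)$ (from the subsolution property for the semiconvex $u^\ep$, using Rademacher's theorem and the convexity of $H$ implicit in Section \ref{DSA}), Mazur's lemma, and the continuity of $H$, I recover $H(\eta(t),p(t))\leq 0$ a.e.

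The main obstacle is enforcing $B(\eta(t),p(t))\leq 0$ on the set $E:=\{t\in[0,T]\mid \eta(t)\in\pl\Om\}$, since the viscosity boundary subsolution condition only yields $\min(H,B)\leq 0$, so the interior inequality $H\leq 0$ might hold strictly while $B>0$. The crucial geometric fact is that for almost every $t\in E$ one has $\dot\eta(t)\cdot\tn(\eta(t))=0$: indeed, $\rho\circ\eta\leq 0$ on $[0,T]$ with equality on $E$, so by the fact that a Lipschitz function has zero derivative a.e.\ on its level sets, $(\rho\circ\eta)'(t)=D\rho(\eta(t))\cdot\dot\eta(t)=0$ for a.e.\ $t\in E$, i.e.\ $\dot\eta$ is tangent to $\pl\Om$ there. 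Together with (A3), which implies that $\lam\mapsto B(x,p-\lam\tn(x))$ is strictly decreasing on $[0,\infty)$ with unbounded image, this suggests correcting $p_\ep$ on $E$ by subtracting $\lam_\ep(t)\tn(\eta(t))$ with $\lam_\ep(t)\geq 0$ minimal so that $B_\ep(\eta(t),p_\ep(t)-\lam_\ep(t)\tn(\eta(t)))\leq 0$; by tangentiality the scalar product against $\dot\eta(t)$ is unchanged.

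The delicate point is that this correction must preserve $H_\ep(\cdot,\cdot)\leq 0$, which is not automatic since $H$ need not be monotone in $\tn$. I would resolve this by performing the construction directly at the level of the set-valued map
\[
\Phi_\ep(t):=\{p\in\bar B_{L+o(1)}\mid H_\ep(\eta(t),p)\leq 0,\,B_\ep(\eta(t),p)\leq 0,\,p\cdot\dot\eta(t)=(u^\ep\circ\eta)'(t)\}
\]
on $E$, and showing it is closed, convex, and nonempty, so that a measurable selection exists by Kuratowski--Ryll-Nardzewski. Nonemptiness reduces, by Hahn--Banach and the tangential structure at boundary points, to the scalar inequality $(u^\ep\circ\eta)'(t)\leq\sup\{p\cdot\dot\eta(t)\mid H_\ep(\eta(t),p)\leq 0,\,B_\ep(\eta(t),p)\leq 0\}$ (and the dual inf bound); these in turn follow from the viscosity boundary subsolution property tested against suitable linear functions whose gradients are extremal in the admissible convex set, using (A3) once more to annihilate the normal component. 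The weak-$*$ limit of the $p_\ep$ so obtained then yields the required $p\in L^\infty([0,T],\R^n)$.
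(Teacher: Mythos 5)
Your proposal correctly identifies where the difficulty sits, but the two steps you rely on to resolve it are not justified, and the first of them is the actual content of the paper's argument. You assert that the standard isotropic sup-convolution $u^\ep$ is a viscosity subsolution of the problem with $H$ and $B$ perturbed by a modulus $\om(\ep)$. This is unproved and in general fails near $\pl\Om$: if $x$ is within distance $O(\ep)$ of the boundary (and $\eta$ may spend positive time there, indeed it may lie on $\pl\Om$), the maximizing point $\hat y$ in the sup-convolution can belong to $\pl\Om$, where the subsolution property only gives $\min\{H(\hat y,q),B(\hat y,q)\}\le 0$ for $q=(\hat y-x)/\ep\in D^+u(\hat y)$; when the $B$-branch is the active one, neither $H(x,Du^\ep(x))\le\om(\ep)$ nor any boundary inequality for $u^\ep$ follows. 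The paper's Lemma \ref{i:localized-main} exists precisely to handle this: one first upgrades the boundary subsolution property (Lemma \ref{i:subdiff}, i.e.\ the stability of $D^+u(x)$ under subtracting multiples of $\tn(x)$, the fact that the maximal normal shift is a limit of interior superdifferentials, and the monotonicity of $t\mapsto B(x,p+t\tn(x))$ from (A3)) to ``$H(x,p)>0\Rightarrow B(z,p)\le\gep$'', and then uses a sup-convolution with an \emph{anisotropic} kernel $\gz$ calibrated in the normal direction so that a boundary maximizer forces the $B$-branch to be violated (hence $H\le 0$ propagates into a full neighborhood, including an outer slab), while $B(z,\cdot)\le 4\gep$ holds automatically on that slab; mollification then gives the $C^1$ approximations to which the exact chain rule applies. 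Your isotropic kernel has no such mechanism, so the inequalities you later feed into the weak-$*$ limit are unavailable exactly on the set of times where $\eta$ touches or hugs the boundary.

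Two further points. The chain-rule step is shaky as stated: sup-convolutions are semiconvex, so $D^+u^\ep$ may be empty on the non-differentiability set, which the curve need not avoid (it can run along a ridge); the a.e.\ chain rule for semiconvex functions is naturally formulated with $D^-$, and transferring the subsolution inequalities to $D^-$ requires a convexity/Jensen or mollification argument — the paper avoids the issue by working with $C^1$ approximants and passing to the limit via Lemma \ref{i:basic-p} (Banach--Saks plus convexity of $H$ and $B$). More seriously, your fallback selection argument reduces nonemptiness of $\Phi_\ep(t)$ to the scalar bounds $\inf_{p\in C}p\cdot\dot\eta(t)\le(u^\ep\circ\eta)'(t)\le\sup_{p\in C}p\cdot\dot\eta(t)$ with $C=\{p\mid H\le 0,\ B\le 0\}$; but these bounds on the tangential directional derivative of $u$ at boundary points are essentially equivalent to the theorem itself, and ``testing against suitable linear functions'' does not yield them, since $u$ is merely Lipschitz, $D^+u(\eta(t))$ may be empty, and viscosity tests give information only at maximum points which need not be $\eta(t)$. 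So the key boundary step of your proof is asserted rather than proved, and filling it in would require an ingredient of the type of Lemmas \ref{i:subdiff} and \ref{i:localized-main}.
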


To prove the above theorem, we use the following lemmas.

\begin{lem}\label{i:basic-p}
Let $w\in \Lip(\bar\Om)$,  $\{w_\gep\}_{\gep>0}\subset \Lip(\bar\Om)$
and $\{p_\gep\}_{\gep>0}\subset L^\infty([0,\,T],\R^n)$. 
Assume that $w_\gep(x) \to w(x)$ uniformly on $\bar\Om$ as $\gep\to 0$ 
and, for a.e. $t\in[0,\,T]$,   
\begin{equation} \label{i:4-1}
\left\{
\begin{aligned}
&\fr{\d}{\d t}w_\gep\circ \eta(t)=p_\gep(t)\cdot\dot\eta(t), \qquad 
H(\eta(t),\,p_\gep(t))\leq\gep, \\
&B(\eta(t),\,p_\gep(t))\leq\gep \ \ \text{ if } \ \eta(t)\in\pl\Om.
\end{aligned}\right.
\end{equation}
then there exists 
a function $p\in L^\infty([0,\,T],\R^n)$ such that for a.e. $t\in[0,\,T]$, 
$\fr{\d}{\d t}w\circ\eta(t)=p(t)\cdot\dot\eta(t)$,  
\ $H(\eta(t),\,p(t))\leq 0$,
 \ and \ 
$B(\eta(t),\,p(t))\leq 0$\ if\ $\eta(t)\in\pl\Om$.  
\end{lem}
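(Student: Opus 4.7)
The proof will be a weak-$*$ compactness argument combined with Mazur's lemma to transfer convex inequalities through the limit. The four ingredients are (i) uniform bounds from coercivity, (ii) extraction of a weak-$*$ limit $p$, (iii) identification of $p\cdot\dot\eta$ as $\frac{\d}{\d t}w\circ\eta$, and (iv) propagation of the Hamiltonian/boundary inequalities via convexity of $H(x,\cdot)$ (available under the standing (A7) of this section) and (A5).

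First, the hypothesis $H(\eta(t),p_\gep(t))\le\gep$ together with the coercivity (A1) gives a uniform bound $\|p_\gep\|_{L^\infty([0,T],\R^n)}\le R_0$ for $\gep\in(0,1]$. Banach--Alaoglu then yields a sequence $\gep_j\downarrow 0$ and $p\in L^\infty([0,T],\R^n)$ with $p_{\gep_j}\to p$ weakly-$*$ in $L^\infty$. Integrating the a.e.\ identity $\frac{\d}{\d t}w_\gep\circ\eta(t)=p_\gep(t)\cdot\dot\eta(t)$ gives, for every $0\le s\le t\le T$,
\[
w_{\gep_j}(\eta(t))-w_{\gep_j}(\eta(s))=\int_s^t p_{\gep_j}(r)\cdot\dot\eta(r)\d r.
\]
The uniform convergence $w_{\gep_j}\to w$ on $\bar\Om$ handles the left-hand side, while $\dot\eta\in L^\infty\subset L^1$ together with the weak-$*$ convergence of $p_{\gep_j}$ handles the right-hand side; Lebesgue differentiation then yields $\frac{\d}{\d t}w\circ\eta(t)=p(t)\cdot\dot\eta(t)$ for a.e.\ $t\in[0,T]$.

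The delicate step is propagating the inequalities $H(\eta(t),p_{\gep_j}(t))\le\gep_j$ and $B(\eta(t),p_{\gep_j}(t))\le\gep_j$ (the latter on the measurable set $E:=\{t\in[0,T]\mid \eta(t)\in\pl\Om\}$) to the weak-$*$ limit $p$. The plan is to apply Mazur's lemma to $\{p_{\gep_j}\}$ viewed as a weakly convergent sequence in $L^2([0,T],\R^n)$, producing finite convex combinations
\[
q_N=\sum_{j\ge N}\lam_{N,j}p_{\gep_j},\qquad \lam_{N,j}\ge 0,\ \sum_j\lam_{N,j}=1,
\]
with $q_N\to p$ strongly in $L^2$. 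Passing to a further subsequence, $q_{N_k}(t)\to p(t)$ for a.e.\ $t\in[0,T]$. Using convexity of $H(\eta(t),\cdot)$ (from (A7)),
\[
H(\eta(t),q_N(t))\le\sum_{j\ge N}\lam_{N,j}H(\eta(t),p_{\gep_j}(t))\le\sup_{j\ge N}\gep_j\to 0,
\]
and the continuity of $H$ in $p$ (A2) then gives $H(\eta(t),p(t))\le 0$ a.e. The identical argument on $E$, using convexity (A5) of $B(\eta(t),\cdot)$ and the Lipschitz bound (A4), yields $B(\eta(t),p(t))\le 0$ for a.e.\ $t\in E$, completing the proof.

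The main obstacle is the final step: weak-$*$ convergence alone cannot preserve pointwise nonlinear inequalities, so one cannot simply take $\liminf$ in $H(\eta(t),p_{\gep_j}(t))\le\gep_j$. The convexity of $H$ and $B$ in $p$ (here justified by (A7) and (A5), both in force in this section) is exactly what makes the Mazur averaging step effective. A secondary technical point is to check that the set $E$ is measurable and that the pointwise a.e.\ subsequence from Mazur's lemma is compatible with the full-measure set on which $q_{N_k}(t)\to p(t)$ and on which the various inequalities hold; this is routine using the continuity of $\eta$.
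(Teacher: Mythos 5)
Your proof is correct and is essentially the paper's argument: the paper also gets the uniform $L^\infty$ bound from coercivity, produces convex combinations of the $p_\gep$ converging strongly in $L^2$ and a.e.\ (via the Banach--Saks theorem, where you use Banach--Alaoglu plus Mazur's lemma — the same mechanism), propagates the inequalities through convexity of $H$ and $B$ in $p$, and recovers the identity $\fr{\d}{\d t}w\circ\eta=p\cdot\dot\eta$ from the integrated form. The only cosmetic difference is that you pass the integral identity to the limit with the original weak-$*$ sequence while the paper does it along the convex combinations; both are fine.
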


\begin{proof} Observe first that for all $t\in[0,\,T]$, 
\[
\Big|w(\eta(t))-w(\eta(0))-\int_0^t p_\gep(s)\cdot\dot\eta(s)\d s\Big|
\leq 2\|w_\gep-w\|_{\Li(\Om)}. 
\]

Next, we observe by the coercivity of $H$ that $\{p_\gep\}$ 
is bounded in $L^\infty([0,\,T],\R^n)$, and then, in view of the Banach-Sack theorem, 
we may choose a sequence $\{p_j\}_{j\in\N}$ and 
a function $p\in L^\infty([0,\,T],\R^n)$ so that $p_j$ is in 
the closed convex hull of $\{p_\gep\mid 0<\gep<1/j\}$,  
$p_j \to p$ strongly in $L^2([0,\,T],\R^n)$ as $j\to\infty$ 
and $p_j(t) \to p(t)$  for  a.e. $t\in[0,\,T]$ as $j\to\infty$.
By \eqref{i:4-1} and the convexity of $H$ and $B$, we see that,  
for a.e. $t\in[0,\,T]$, \ $H(\eta(t),\,p_j(t))\leq j^{-1}$, \ and \ 
$B(\eta(t),\,p_j(t))\leq j^{-1}$ \ if \ $\eta(t)\in\pl\Om$.
Moreover, we have, for all $t\in[0,\,T]$,
\[
\Big|w(\eta(t))-w(\eta(0))-\int_0^t p_j(s)\cdot\dot\eta(s)\d s\Big|
\leq 2\sup_{0<\gep<j^{-1}}\|w_\gep-w\|_{\Li(\Om)} 
. 
\]
Now, by sending $j\to\infty$, we get for a.e. $t\in[0,\,T]$, \ 
$H(\eta(t),\,p(t))\leq 0$, \  and \ 
$B(\eta(t),\,p(t))\leq 0$  \ if \ $\eta(t)\in\pl\Om$, 
and, for all $t\in[0,\,T]$, \ 
$
w(\eta(t))-w(\eta(0))=\int_0^t p(s)\cdot\dot\eta(s)\d s$. The proof is complete. 
\end{proof}

\begin{lem}\label{i:localized-main}Let $z\in\pl\Om$ and $\gep>0$.  
Then there are an open neighborhood $U$   
of $z$ in $\bar\Om$, a sequence $\{V_j\}_{j\in\N}$ 
of open neighborhoods of $U\cap\pl\Om$ in $V$ 
and a sequence $\{u_j\}_{j\in\N}$ of $C^1$ functions   
on $W_j:=U\cup V_j$, such that 
for each $j$ the function  
$u_j$ satisfies   
\[
\left\{
\begin{aligned}
H(x,Du_j(x))\leq\gep \ \ \text{ in }W_j, \\ 
B(x,Du_j(x))\leq\gep \ \ \text{ in }V_j,
\end{aligned}\right.
\] 
and, as $j\to \infty$, $u_j(x)\to u(x)$ uniformly on $U$. 
\end{lem}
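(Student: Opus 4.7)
The plan is to combine a local flattening of the boundary near $z$, a perturbation $v_\gb:=u-\gb\rho$ that creates strict slack in the boundary inequality via obliqueness, and a standard two-step regularization (sup-convolution and mollification) for which the convexity of both $H$ and $B$ is essential. Using (A0), fix a small neighborhood $U$ of $z$ in $\bar\Om$ and a local $C^1$-diffeomorphism flattening $\pl\Om\cap U$ to a portion of a hyperplane; in the new coordinates $\Om\cap U=\{y_n<0\}\cap U$, $\pl\Om\cap U=\{y_n=0\}\cap U$ and the outward normal is $e_n$. All the hypotheses (A1)--(A5), (A7) are preserved, perhaps with updated constants, so the problem is reduced to the flat half-space.

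For each small $\gb>0$ set $v_\gb:=u-\gb\rho$. A direct viscosity argument using (A2), (A3) and the subsolution property of $u$ shows that $v_\gb$ is a viscosity subsolution on $\bar\Om\cap U$ of $H(x,Dv_\gb)\le C\gb$ in $\Om$ and, on $\pl\Om$, the mixed inequality $\min\{H(x,Dv_\gb)-C\gb,\,B(x,Dv_\gb)+\theta\gb\}\le 0$. The perturbation $-\gb\rho$ shifts the gradient by $-\gb\tn$, and by obliqueness this produces a strict decrease of $\theta\gb$ in $B$ at the cost of only $O(\gb)$ in $H$. Combining this slack with the convexity (A5) of $B$, a Lipschitz solution of (E1) (which exists by Theorem \ref{thm:additive}), and the convexity (A7) of $H$, one upgrades the mixed boundary minimum to a \emph{separate} strict $B$-inequality $B(x,Dv_\gb)\le -\theta\gb/2$ on $\pl\Om$ in viscosity sense, while keeping $H\le C'\gb$ in $\Om$; the key point is that obliqueness and convexity allow one to rotate the gradient toward the inward normal to decrease $B$ without spoiling the $H$-bound.

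Regularize in two stages. First extend $v_\gb$ slightly beyond $\pl\Om$ into a neighborhood in $V$ (for instance by reflection across $\{y_n=0\}$ followed by a small downward shift preserving the inequalities), then take the sup-convolution $v_\gb^\delta(x):=\sup_y\{v_\gb(y)-(2\delta)^{-1}|x-y|^2\}$. This is Lipschitz and semiconvex, and by a direct analysis of the location of the maximizer it satisfies both $H(x,Dv_\gb^\delta)\le C'\gb+\go(\sqrt\delta)$ on a suitable set and $B(x,Dv_\gb^\delta)\le -\theta\gb/2+\go(\sqrt\delta)$ in a tubular neighborhood of $U\cap\pl\Om$. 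Second, convolve with a standard smooth mollifier $\rho_\eta\in C_c^\infty(B_\eta)$; Jensen's inequality applied to the convex maps $p\mapsto H(x,p)$ and $p\mapsto B(x,p)$ transfers both inequalities to the $C^\infty$ function $u_{\gb,\delta,\eta}:=v_\gb^\delta *\rho_\eta$ with an extra $\go(\eta)$ error, while (A2), (A4) handle the commutator between convolution and the $x$-dependence.

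Fix $\gb$ so that $C'\gb<\gep/2$ and $\theta\gb/4<\gep$; then choose sequences $\delta_j,\eta_j\to 0$ compatibly so that the moduli $\go(\sqrt{\delta_j})+\go(\eta_j)$ stay below $\gep/2$. Setting $u_j:=u_{\gb,\delta_j,\eta_j}$ and $V_j$ to be a shrinking tubular neighborhood of $U\cap\pl\Om$ inside $V$ where all the above estimates hold gives a $C^\infty$-sequence $(u_j)$ with $H(x,Du_j)\le\gep$ on $W_j:=U\cup V_j$, $B(x,Du_j)\le\gep$ on $V_j$, and $u_j\to u$ uniformly on $U$ by standard sup-convolution and mollification estimates. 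The principal obstacle is the upgrade from the mixed viscosity condition $\min(H,B)\le 0$ on $\pl\Om$ to a \emph{separate} pair of pointwise subsolution inequalities that can both survive the regularization; this separation, which is the crux of the argument, is achieved only by simultaneously invoking the obliqueness (A3), the convexity (A5) and (A7) of both $B$ and $H$, and the existence of a global Lipschitz solution of (E1).
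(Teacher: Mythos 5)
Your plan founders exactly at the step you identify as the crux: the claimed ``upgrade'' of the mixed viscosity boundary condition for $v_\gb:=u-\gb\rho$ to a \emph{separate} inequality $B(x,Dv_\gb)\le-\theta\gb/2$ on $\pl\Om$. Subtracting $\gb\rho$ shifts superdifferentials by $-\gb\tn(x)$ and, via (A3), lowers $B$ by only $\theta\gb$; this helps when the $B$-branch of the relaxed condition is active, but when the $H$-branch is active nothing prevents $B$ from being large and positive. For instance, if $u$ is differentiable at a boundary point with $H(x,Du(x))<0$ and $B(x,Du(x))\gg\theta\gb$ (a perfectly admissible subsolution: the boundary condition is satisfied through $H$ alone), then $B(x,Dv_\gb(x))\ge B(x,Du(x))-M_B\gb>0$, so no separate $B$-inequality can hold for small $\gb$, with or without convexity, obliqueness, or a solution of (E1); your proposal gives no mechanism for how those ingredients would be used, and indeed no small perturbation of $u$ can produce such a separation. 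The paper never separates the inequalities at the level of $u$: it only proves the \emph{relaxed} frozen statement that at every boundary superdifferential element $p$ either $H(x,p)\le 0$ or $B(z,p)\le\gep$, and the nontrivial input there is Lemma \ref{i:subdiff}, which shows that the extremal normal translate $p+\tau\tn(x)$ of $D^+u(x)$ is a limit of interior superdifferentials and hence satisfies $H\le 0$; combined with the monotonicity from (A3) this yields $B(z,p)\le\gep$ whenever $H(x,p)>0$. This lemma (or some substitute for it) is entirely absent from your argument.

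The second gap is in the regularization. A standard isotropic sup-convolution (after ``reflection and a small downward shift'', which in any case does not preserve the subsolution property of an $x$-dependent $H$ nor the boundary condition) cannot convert the relaxed boundary condition into the two pointwise inequalities you need: when the maximizer lands on $\pl\Om$ the gradient $(\x-\y)/\delta$ of the quadratic kernel carries no usable information about $B$, so you can conclude neither $H\le\gep$ nor $B\le\gep$ there. The paper's proof instead uses an \emph{anisotropic} kernel $\ga\gz((y-x)/\ga)$ built from the boundary operator as in Lemma \ref{lem:coercivity}: $\gz$ is designed so that $B(z,D\gz(\xi))$ is $>2\gep$ when $\xi_n\ge-\gd$ and $<4\gep$ when $\xi_n\le\gd$. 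This forces the $H$-branch of the relaxed condition at boundary maximizers (giving $H\le\gep$ throughout $W_\ga$, including on a strip outside $\bar\Om$) and simultaneously gives $B(z,Du_\ga)\le4\gep$ on the strip $V_\ga$; only then does the final mollification, using the convexity (A5), (A7), produce the $C^1$ functions $u_j$. So the architecture you need is: relaxed frozen subsolution property via Lemma \ref{i:subdiff}, then separation via the $B$-adapted sup-convolution kernel, then mollification --- not perturbation-then-standard-sup-convolution.
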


We now prove Theorem \ref{i:exist-p-n} 
by assuming Lemma \ref{i:localized-main}, the proof of which will 
be given after the proof of Theorem \ref{i:exist-p-n}.

\begin{proof}[Proof of Theorem {\rm \ref{i:exist-p-n}}] In view of Lemma \ref{i:basic-p}, 
it is enough to show that for each $\gep>0$ there exists a function 
$p_\gep\in L^\infty([0,\,T],\R^n)$ such that 
for a.e. $t\in[0,\,T]$, $\fr{\d}{\d t}u\circ \eta(t)=p_\gep(t)\cdot \dot\eta(t)$, 
$H(\eta(t),\,p_\gep(t))\leq\gep$, and $B(\eta(t),\,p_\gep(t))\leq\gep$ 
if $\eta(t)\in\pl\Om$.   
To show this, we fix any $\gep>0$. It is sufficient to prove that for each $\tau\in [0,\,T]$, 
there exist a neighborhood $I_\tau$ of $\tau$, relative to $[0,\,T]$, 
and a function $p_\tau\in L^\infty(I_\tau,\,\R^n)$ such that 
for a.e. $t\in I_\tau$, $\fr{\d}{\d t}  u\circ \eta(t)=p_\tau(t)\cdot \dot\eta(t)$, 
$H(\eta(t),\,p_\tau(t))\leq \gep$, and $B(\eta(t),\,p_\tau(t))\leq \gep$\ if $\eta(t)\in\pl\Om$.  

Fix any $\tau\in[0,\,T]$. Consider first the case where $\eta(\tau)\in\Om$. 
There is a $\gd>0$ such that  
$\eta(I_\tau)\subset\Om$, 
where $I_\tau:=[\tau-\gd,\,\tau+\gd]\cap[0,\,T]$. 
We may choose an open neighborhood $U$ of $z$ such that 
$\eta(I_\tau)\subset U\Subset\Om$. 
By the mollification technique, for any $\ga>0$,  
we may choose 
a function $ u_\ga\in C^1(U)$ such 
that \ 
$H(x,D u_\ga(x))\leq\gep$ and $|u_\ga(x)- u(x)|<\ga$ for all $x\in U$.   
Then, setting $p_{\tau,\ga}(t)=D u_\ga(\eta(t))$ for $t\in I_\tau$ and $\ga>0$, 
we have \  
$\fr{\d}{\d t} u_\ga\circ\eta(t)=p_{\tau,\ga}(t)\cdot\dot\eta(t)$ \ 
and \ $H(\eta(t),p_{\tau,\ga}(t))\leq \gep$ \ for a.e. $t\in I_\tau$ and all $\ga>0$.  
Hence, by Lemma \ref{i:basic-p}, we find that there 
is a function $p_\tau\in L^\infty(I_\tau,\,\R^n)$ 
such that for a.e. $t\in I_\tau$, \ $\fr\d{\d t} u\circ\eta(t) 
=p_\tau(t)\cdot\dot\eta(t)$ \ and \ $H(\eta(t),p_\tau(t))\leq \gep$. 

Next consider the case where $\eta(\tau)\in\pl\Om$. Thanks to Lemma \ref{i:localized-main}, 
there are an open neighborhood $U$ of $\eta(\tau)$ in $\bar\Om$, 
a sequence $\{V_{j}\}_{j\in\N}$ of open neighborhoods of $U\cap\pl\Om$ in $V$ 
and a sequence $\{u_{j}\}_{j\in\N}$ of $C^1$ functions on $W_{j}:
=U\cup V_{j}$ such that for any $j\in\N$, \  
$H(x,D u_{j}(x))\leq\gep$ \ for all $x\in W_{j}$, \ 
$B(x,D u_{j}(x))\leq\gep$ \ for all $x\in V_j$   
and $|u_{j}(x)-u(x)|<1/j$ \ for all $x\in U$.  
We now choose a constant $\gd>0$ so that if $I_\tau:=[\tau-\gd,\,\tau+\gd]\cap[0,\,T]$, 
then $\eta(I_\tau)\subset U$.  
Set $p_{j}(t)=D u_{j}(\eta(t))$ for $t\in I_\tau$. Then, for a.e. $t\in[0,\,T]$, we have \   
$\fr\d{\d t} u_{j}\circ\eta(t)=p_{j}(t)\cdot\dot\eta(t)$, \  
$H(\eta(t),\,p_{j}(t))\leq\gep$, \ and \ $B(\eta(t),\,p_{j}(t))\leq \gep$ \  
if $\eta(t)\in\pl\Om$. Lemma \ref{i:basic-p} now ensures that there exists a
function $p_\tau\in L^\infty(I_\tau,\R^n)$ such that for a.e. $t\in I_\tau$, \ 
$\fr\d{\d t} u\circ\eta(t)=p_{\tau}(t)\cdot\dot\eta(t)$, \  
$H(\eta(t),\,p_{\tau}(t))\leq\gep$, \ and \ $B(\eta(t),\,p_{\tau}(t))\leq \gep$ \  
if $\eta(t)\in\pl\Om$. The proof is now complete. 
\end{proof}

For the proof of Lemma \ref{i:localized-main}, we need the following lemma.

\begin{lem} \label{i:subdiff}
Let $w\in\Lip(\bar\Om)$ be a subsolution of \emph{(E1)}, with $a=0$. 
Let $z\in\pl\Om$ and $p\in D^+w(z)$. 
Assume that $p+tn(z)\not\in D^+w(z)$ for all $t>0$. 
Then 
\[
p\in\bigcap_{r>0}\ol{\bigcup_{x\in B_r(z)\cap\Om}D^+w(x)}.
\] 
In particular, we have $H(z,\,p)\leq 0$. 
\end{lem}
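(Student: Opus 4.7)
The plan is to approximate $p$ by superdifferentials at interior points, obtained by perturbing a test function for $p$ at $z$ by the boundary defining function $\rho$. Since $p\in D^+w(z)$, I would start with the quadratic test function $\phi(x):=w(z)+p\cdot(x-z)+|x-z|^2$, for which $w-\phi$ attains a strict local maximum at $z$ on $\bar\Om$; shrinking the neighborhood if necessary, one obtains a quadratic upper bound $(w-\phi)(x)\le -c|x-z|^2$ on $\bar B_{r_0}(z)\cap\bar\Om$ for some $c,r_0>0$. For $\del\in(0,1)$, set $\phi_\del:=\phi+\del \rho$. Its gradient at $z$ equals $p+\del\tn(z)$, and since $\tn(z)=|D\rho(z)|n(z)$ is a strictly positive multiple of $n(z)$, the hypothesis yields $p+\del\tn(z)\notin D^+w(z)$.

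The key observation is then that $w-\phi_\del$ cannot have a local maximum at $z$ on $\bar\Om$. Because $\rho$ vanishes on $\pl\Om$, one has $w-\phi_\del=w-\phi$ there, and $z$ remains a strict local maximum of $w-\phi_\del$ on $\pl\Om\cap \bar B_{r_0}(z)$. Consequently the maximum of $w-\phi_\del$ on the compact set $\bar B_{r_0}(z)\cap\bar\Om$ must exceed $(w-\phi_\del)(z)=0$ and must be attained at some point $\hat x_\del\in\Om$ with $(w-\phi_\del)(\hat x_\del)>0$ (it cannot be at $z$, nor on $\pl\Om$ where the function is nonpositive).

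The main technical step is a quantitative location estimate for $\hat x_\del$. Combining the strict positivity of $(w-\phi_\del)(\hat x_\del)$ with $(w-\phi)\le -c|\cdot-z|^2$ and $|\rho(x)|\le L|x-z|$, where $L$ bounds $|D\rho|$, I would derive $c|\hat x_\del-z|^2\le \del L|\hat x_\del-z|$, hence $|\hat x_\del-z|\le \del L/c$. For $\del<cr_0/L$, this places $\hat x_\del$ strictly inside $B_{r_0}(z)\cap\Om$, so it is a genuine local maximum of $w-\phi_\del$ on $\bar\Om$, and the gradient $p_\del:=D\phi_\del(\hat x_\del)=p+2(\hat x_\del-z)+\del D\rho(\hat x_\del)$ lies in $D^+w(\hat x_\del)$. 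Passing to the limit $\del\to 0$ gives $\hat x_\del\to z$ in $\Om$ and $p_\del\to p$, which establishes the claim $p\in\bigcap_{r>0}\overline{\bigcup_{x\in B_r(z)\cap\Om}D^+w(x)}$.

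The final assertion $H(z,p)\le 0$ follows at once: $w$ is a subsolution of (E1) with $a=0$ and $\hat x_\del\in\Om$, so $H(\hat x_\del,p_\del)\le 0$ for each small $\del$; continuity of $H$ combined with the convergences $\hat x_\del\to z$ and $p_\del\to p$ yields $H(z,p)\le 0$ in the limit. I expect the main obstacle to be the quantitative location control of $\hat x_\del$: one needs the interplay between the quadratic strictness of $w-\phi$ and the linear boundary perturbation $\del\rho$ to be sharp enough to push the maximum strictly inside $B_{r_0}(z)\cap\Om$, away both from $z$ and from the spherical boundary $\pl B_{r_0}(z)$.
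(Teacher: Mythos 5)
Your overall strategy is the same as the paper's: perturb a test function for $p$ at $z$ by a small multiple of the defining function $\rho$, note that the hypothesis $p+tn(z)\notin D^+w(z)$ for $t>0$ forbids the perturbed maximum from sitting at $z$, rule out the rest of $\pl\Om$ because $\rho$ vanishes there while the unperturbed function is strictly below its value at $z$, and pass to the limit to obtain $p\in\bigcap_{r>0}\overline{\bigcup_{x\in B_r(z)\cap\Om}D^+w(x)}$ and then $H(z,p)\le 0$ from the interior subsolution inequality. The paper does exactly this, but globally and softly: it takes an abstract $\phi\in C^1(\bar\Om)$ with $D\phi(z)=p$ such that $w-\phi$ has a strict maximum at $z$ on $\bar\Om$, maximizes $w-\phi-\gep\psi$ over all of $\bar\Om$, and uses only the compactness fact that the maximizers converge to $z$; no quantitative location estimate is needed.

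However, your very first step fails as written. From $p\in D^+w(z)$ you only get $w(x)\le w(z)+p\cdot(x-z)+o(|x-z|)$ as $x\to z$ in $\bar\Om$, and this $o(|x-z|)$ need not be $O(|x-z|^2)$ (an error of order $|x-z|^{3/2}$ is a typical obstruction). Hence with your explicit choice $\phi(x)=w(z)+p\cdot(x-z)+|x-z|^2$ the function $w-\phi$ need not have even a local maximum at $z$, and the quadratic bound $(w-\phi)(x)\le -c|x-z|^2$, on which your whole location estimate $|\hat x_\del-z|\le \del L/c$ (and hence the exclusion of the sphere $\pl B_{r_0}(z)$) rests, is unjustified. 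The gap is localized and repairable by the standard test-function lemma of viscosity theory: there is $\phi_0\in C^1$ with $D\phi_0(z)=p$, $(w-\phi_0)(z)=0$ and $w-\phi_0\le 0$ near $z$ in $\bar\Om$; replacing your $\phi$ by $\phi_0+|\cdot-z|^2$ restores the bound $(w-\phi)(x)\le-|x-z|^2$, after which your argument, including the convergence $D\phi(\hat x_\del)+\del D\rho(\hat x_\del)\to p$, goes through (note also that only $(w-\phi_\del)(\hat x_\del)\ge 0$ is needed there, not strict positivity). Alternatively, you can avoid the quantitative step entirely, as the paper does, by working with a strict maximum on all of $\bar\Om$ and plain compactness.
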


\begin{proof}We choose a fucntion $\phi\in C^1(\bar\Om)$ so that 
$D\phi(z)=p$ and the function $w-\phi$ attains 
a strict maximum at $z$. Let $\psi\in C^1(\R^n)$ be 
a function such that $\Om=\{x\in\R^n\mid \psi(x)<0\}$ 
and $D\psi(x)\not=0$ for all $x\in\pl\Om$. 
For $\gep>0$, let $x_\gep\in\bar\Om$ be a maximum point of the fucntion 
$\Phi:=w-\phi-\gep\psi$ on $\bar\Om$. It is obvious that 
$x_\gep\to z$ as $\gep\to 0+$ and 
$D(\phi+\gep\psi)(x_\gep)\in D^+w(x_\gep)$. Suppose that $x_\gep=z$. Then 
we have \ $D\phi(z)+\gep|D\psi(z)|n(z)\in D^+w(z)$, which is impossible 
by the choice of $p$. That is, we have $x_\gep\not=z$.   
Observe that for any $x\in\pl\Om$,\ 
$\Phi(x)=(w-\phi)(x)\leq (w-\phi)(z)=\Phi(z)<\Phi(x_\gep)$,  
which guarantees that $x_\gep\in\Om$. Thus we have
$p=\lim_{\gep\to 0+}D(\phi+\gep \psi)(x_\gep)
\in \bigcup_{r>0}\ol{\bigcap_{x\in \Om\cap B_r(z)}D^+w(x)}$, 
which implies that $H(z,\,p)\leq 0$.  
\end{proof}

\begin{proof}[Proof of Lemma {\rm \ref{i:localized-main}}]
We fix any $0<\gep<1$   
and $z\in\pl\Om$.   
Since $\Om$ is a $C^1$ domain, 
we may assume after a change of variables if necessary 
that $z=0$ and for some constant $r>0$,  
\[
B_r\cap \bar\Om=\{x=(x_1,...,x_n)\in B_r\mid x_n\leq 0\}. 
\] 
Of course, we have $\tn(x)=n(z)=e_n$ for all $x\in B_r\cap \pl\Om$. 

Now, we choose a constant $K>0$ so that for $(x,p)\in\bar\Om\tim\R^n$, 
if $H(x,\,p)\leq 0$, then $|p|\leq K$. We next choose a constant $R>0$ so that 
$|B(x,p)-B(z,p)|\leq \gep$ for all $(x,p)\in (\pl\Om\cap B_R)\tim B_K$.  
Replacing $r$ by $R$ if $r>R$, we may assume that $r\leq R$.  

We now show that $u$ is a subsolution of 
\begin{equation}\label{i:4-2}
\left\{
\begin{aligned}
&H(x,Du(x))\leq 0 \ \ \text{ in }B_r\cap \Om, \\
&B(z,Du(x))\leq \gep \ \ \text{ on }B_r\cap \pl\Om. 
\end{aligned}\right.
\end{equation}
To do this, we fix any $x\in B_r\cap\bar\Om$ and $p\in D^+u(x)$. We need to consider only the case 
when $x\in\pl\Om$. We may assume that $H(x,\,p)>0$. Otherwise, we have nothing 
to prove. 
We set $\tau:=\sup\{t\geq 0\mid p+t\tn(x)\in D^+u(x)\}$. 
Note that $B(x,p)\leq 0$ and, therefore, $\tau\geq 0$. 
Since the function $t\mapsto B(x,\,p+t\tn(x))-\theta t$ is non-decreasing on 
$\R$, we see that $B(x,\,p+t\tn(x))>0$ for all $t$ large enough. 
Also, it is obvious that $H(x,\,p+t\tn(x))>0$ for all $t$ large enough. Therefore, 
we see that $p+t\tn(x)\not\in D^+u(x)$ if $t$ is large enough 
and conclude that $0\leq \tau<\infty$. 

Since $D^+u(x)$ is a closed subset of $\R^n$, 
we see that $p+\tau \tn(x)\in D^+u(x)$. From the definition 
of $\tau$, we observe that $p+t\tn(x)\not\in D^+u(x)$ for $t>\tau$.   
We now invoke Lemma \ref{i:subdiff}, 
to find that $H(x,\,p+\tau \tn(x))\leq 0$.  

We recall the standard observation that 
if $q\in D^+u(x)$, then $q-t\tn(x)\in D^+u(x)$ for all $t\geq 0$. 
Hence, we must have either $H(x,\,p+t\tn(x))\leq 0$ 
or $B(x,\,p+t\tn(x))\leq 0$ for any $t\leq \tau$. 
Set $\gs:=\sup\{t\in[0,\,\tau]\mid H(x,\,p+t\tn(x))>0\}$, and observe that
$0<\gs\leq\tau$ and $H(x,\,p+\gs \tn(x))\leq 0$. 
There is a sequence $\{t_j\}\subset[0,\,\tau]$ converging to $\gs$ 
such that $H(x,\,p+t_j\tn(x))>0$, which implies that $B(x,\,p+t_j\tn(x))\leq 0$. 
Hence we have $B(x,\,p+\gs \tn(x))\leq 0$. 
Thus we have $\gs>0$, $H(x,\,p+\gs \tn(x))\leq 0$ 
and $B(x,\,p+\gs \tn(x))\leq 0$. By the choice of $K$, we have $p+\gs n(z)\in B_K$, 
and hence $B(z,\,p+\gs \tn(x)))\leq \gep$. Noting that $\tn(x)=\tn (z)$ and $\gs>0$, we  
see by the monotonicity of $t\mapsto B(z,\,p+tn(z))$ 
that $B(z,\,p)\leq \gep$. Thus we find that $u$ is a subsolution of \eqref{i:4-2}.  

Following the arguments of Lemma 
\ref{lem:coercivity}, we can show that there exist a function $\gz\in C^1(\R^n)\cap \Lip(\R^n)$ 
and a constant $\gd>0$ 
such that for all $\xi\in\R^n$,\ $\gz(\xi)\geq (K+1)|\xi|$ and 
\[
B(z,D\gz(\xi))-3\gep
\begin{cases}
>-\gep &\text{ if }\ \xi_n\geq -\gd,\\[3pt] 
<\gep& \text{ if }\ \xi_n\leq \gd. 
\end{cases}
\]
We may also assume that $\gz\in C^\infty(\R^n)$ 
and all the derivatives of $\gz$ are bounded on $\R^n$.

We introduce the sup-convolution of $u$ as follows: 
\[
u_\ga(x):=\max_{y\in \bar B_r\cap \bar\Om}\big(u(y)-\ga \gz({\textstyle \fr{y-x}{\ga}})\big) 
\ \ \text{ for }x\in\R^n,
\]
where $\ga>0$. 
We write $\gz_\ga(\xi)$ for $\ga \gz(\xi/\ga)$ for convenience, and note that 
$B(z,\,D\gz_\ga(\xi))>2\gep$ if $\xi_n\geq -\ga\gd$ and 
$B(z,\,D\gz_\ga(\xi))<4\gep$ if $\xi_n\leq \ga\gd$. 
Set \ $U=B_{r/2}\cap\{x\in\R^n\mid x_n\leq 0\}$,   
\ $V_\ga=\{x\in B_{r/2}\mid |x_n|<\gd\ga^2\}$ \ and \  
$W_{\ga}:=\{x\in B_{r/2}\mid x_n<\gd\ga^2\}$. Note that 
$U$ is an open neighborhood of $z=0$ relative to $\bar\Om$, 
$V_\ga$ is a open neighborhood of $U\cap\pl\Om$ and $W_\ga=U\cup V_\ga$.  
We choose an $0<\ga_0<1$ so that $V_\ga\subset V$ for all $0<\ga<\ga_0$, 
and assume henceforth that
$0<\ga<\ga_0$. 

We now prove that if $\ga$ is small enough, then $u_\ga$ satisfies in the viscosity sense 
\[
\left\{
\begin{aligned} 
&H(x, Du_\ga(x))\leq\gep \ \ \text{ in }W_\ga, \\ 
&B(z,\,Du_\ga(x))\leq 4\gep \ \ \text{ in }V_\ga. 
\end{aligned}
\right.
\]

\def\x{\hat x} \def\p{\hat p} \def\y{\hat y}

To this end, we fix any $\x\in W_\ga$ and $\p\in D^+u_\ga(\x)$. 
Choose $\y\in \bar B_r\cap \bar\Om$ so that $u_\ga(\x)=u(\y)-\gz_\ga(\y-\x)$. 
It is a standard observation that if $\y\in B_r$, then 
$D\gz_\ga(\y-\x)=\p\in D^+u(\y)$. 

Next, let $\bar x$ denote the projection of $\x$ on the half space $\{x\in\R^n\mid x_n\leq 0\}$. 
That is, $\bar x=\x$ if $\x_n\leq 0$ and $\bar x=(\x_1,...,\x_{n-1},0)$ otherwise. 
We note that \ $|\bar x-\x|<\gd\ga^2<\gd\ga<\gd$ \ and \  
$u(\bar x)-\gz_\ga(\bar x-\x)\leq u_\ga(\x)=u(\y)-\gz_\ga(\y-\x)$. 
Hence, 
\[
u(\y)-u(\bar x)\geq 
\gz_\ga(\y-\x)-\gz_\ga(\bar x-\x)
\geq (K+1)|\y-\x|-\ga\sup_{\xi\in B_{\gd\ga^2}}\gz(\xi/\ga),
\]
and furthermore, $(K+1)|\y-\x|\leq K|\x-\y|+R\ga$, 
where $R:=K\gd+\sup_{\xi\in B_{\gd}}\gz(\xi)$. Accordingly, we get \ 
$|\y-\x|\leq R\ga$.  
We may assume that $R\ga_0<r/2$, so that $\y\in B_r$. 

If $\y_n<0$, then $\y\in\Om$ and we have
$H(\y,\p)\leq 0$. 
Moreover, writing $\go_H$ for the modulus of $H$ on $(B_r\cap\bar\Om) \tim B_K$, we get
\ $H(\x,\p)\leq H(\y,\p)+\go_H(R\ga)\leq \go_H(R\ga)$.  
We may assume by reselecting $\ga_0$ by a smaller positive number 
that $\go_H(R\ga)<\gep$. Thus we have $H(\x,\,\p)\leq\gep$. 

Next, assume that $\y_n=0$. We have $\y_n-\bar x_n\geq 0$,
and hence, $B(z,\,D\gz_\ga(\y-\bar x))>2\gep$. 
Since $|\bar x-\x|<\gd\ga^2$, we find that
$|D\gz_\ga(\y-\x)-D\gz_\ga(\y-\bar x)|\leq C\fr{|\bar x-\x|}\ga\leq C\gd\ga$,
and  
\[
B(z,\,D\gz_\ga(\y-\x))\geq B(z,\,D\gz_\ga(\y-\bar x))-M_B C\gd\ga) 
>2\gep-M_B C\gd\ga,
\]
where $C>0$ is a Lipschitz bound of $D\gz$. 
We may assume by replacing $\ga_0$ by a smaller positive number if needed that  
$M_B C\gd\ga<\gep$.  
Then we have $B(z,\,\p)=B(z,\,D\gz_\ga(\y-\x))>\gep$, 
and therefore, $H(\y,\p)\leq 0$. 
As before, we get $H(\x,\p)\leq \go_H(R\ga)\leq\gep$ . 
Thus we conclude that if $0<\ga<\ga_0$, then  
$H(x,\,Du_\ga(x))\leq\gep$ is satisfied in $W_\ga$ in the viscosity sense.

Next, we assume that $\x\in V_\ga$. Since $\y_n\leq 0$, we have
$\y_n-\x_n<\gd\ga^2<\gd\ga$ 
and $B(z,\,\p)=B(z,\,D\gz_\ga(\y-\x))<4\gep$. 
Thus, $u_\ga$ satisfies $B(z,\,Du_\ga(x))\leq 4\gep$ in $V_\ga$ in the viscosity 
sense. 

Since $H(x,Du_\ga(x))\leq \gep$ in $W_\ga$ in the viscosity sense, 
the functions $u_\ga$ on $W_\ga$, with $0<\ga<\ga_0$, 
have a common Lipschitz bound. Therefore, by replacing $r$ a smaller positive number if necessary, 
we may assume that for any $0<\ga<\ga_0$,  
$B(x,\,Du_\ga(x))\leq 5\gep$ in $V_\ga$ in the viscosity sense. 
   
Finally, we fix $j\in\N$ and choose an $\ga_j\in (0,\,\ga_0)$ so that 
$|u_{\ga_j}(x)-u(x)|<1/j$ for all $x\in U$. 
By mollifying $u_{\ga_j}$, we may find a function $u_j\in C^1(\fr 12 W_{\ga_j})$ 
such that \ $|u_j(x)-u(x)|<2/j$ \ for all \ $x\in \fr 12 U$,\, 
$H(x,\, Du_j(x))\leq 2\gep$ \ for all \ $x\in \fr 12 W_{\ga_j}$ 
and \ $B(x,\,Du_j(x))\leq 6\gep$ \ for all \ $x\in \fr 12 V_{\ga_j}$.  
The collection of the open subset $\fr 12 U$ of $\bar\Om$, 
the sequence $\{\fr 12 V_{\ga_j}\}_{j\in\N}$ 
of neighborhoods of $\pl\Om\cap \fr 12 U$  
and the sequence $\{u_j\}_{j\in\N}$ of functions 
gives us what we needed.    
\end{proof}

\begin{lem}[A convexity lemma]\label{i:inf-en}
Let $\{u_\gl\}_{\gl\in\gL}\subset C(\bar\Om\tim(0,\,\infty))$ 
be a nonempty collection of subsolutions of \eqref{i:2-9}, \eqref{i:2-10}. 
Set $u(x,t)=\inf_{\gl\in\gL}u_\gl(x,t)$ for $(x,t)\in\bar\Om\tim(0,\,\infty)$. 
Assume that $u$ is a real-valued function on $\bar\Om\tim(0,\,\infty)$. 
Then 
$u$ is a subsolution of \eqref{i:2-9}, \eqref{i:2-10}.
\end{lem}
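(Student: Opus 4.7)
The plan is to translate the viscosity subsolution property of each $u_\gl$ into a pathwise inequality through the variational formula of Theorem \ref{i:visco-en}, observe that this inequality is manifestly preserved by taking the infimum in $\gl$, and then recover the PDE formulation for $u$ by a standard test-function argument.

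Fix $\gl\in\gL$ and $0<\tau<t$. On $\bar\Om\tim[\tau,\infty)$ the function $u_\gl$ is a subsolution of \eqref{i:2-9}, \eqref{i:2-10} with the continuous initial datum $u_\gl(\cdot,\tau)$; by Theorem \ref{thm:comparison} it is bounded above by the unique solution $U_\gl^\tau$ with that datum, and Theorem \ref{i:visco-en}, applied with the time origin shifted to $\tau$, expresses $U_\gl^\tau$ as an infimum over $\SP_{t-\tau}(x)$. Hence, for every $x\in\bar\Om$ and every $(\eta,v,l)\in\SP_{t-\tau}(x)$,
\[
u_\gl(x,t)\le\int_0^{t-\tau}\!\!\big(L(\eta(s),-v(s))+F(\eta,v,l)(s)\big)\d s+u_\gl(\eta(t-\tau),\tau).
\]
Taking the infimum over $\gl$ on both sides, using that the integral term is independent of $\gl$ while $\inf_\gl u_\gl(\eta(t-\tau),\tau)=u(\eta(t-\tau),\tau)$, I obtain
\begin{equation}\lab{plan:subact}
u(x,t)\le\int_0^{t-\tau}\!\!\big(L(\eta(s),-v(s))+F(\eta,v,l)(s)\big)\d s+u(\eta(t-\tau),\tau).
\end{equation}

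To recover the PDE for $u$, let $\phi\in C^1(\bar Q)$ be such that $u-\phi$ attains a strict local maximum at $(x_0,t_0)\in Q$, normalized so that $u(x_0,t_0)=\phi(x_0,t_0)$. If $x_0\in\Om$, for each $v_0\in\R^n$ I plug the constant-control trajectory $(\eta(s),v(s),l(s))=(x_0+sv_0,v_0,0)$ into \eqref{plan:subact} with $\tau=t_0-\gd$; using $u\le\phi$ near $(x_0,t_0)$, a first-order Taylor expansion of $\phi$ and division by $\gd\to 0^+$ give
\[
\phi_t(x_0,t_0)\le D\phi(x_0,t_0)\cdot v_0+L(x_0,-v_0)\quad\text{for all }v_0\in\R^n,
\]
and the Fenchel identity $\inf_{v_0}(p\cdot v_0+L(x,-v_0))=-H(x,p)$ yields $\phi_t(x_0,t_0)+H(x_0,D\phi(x_0,t_0))\le 0$. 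For $x_0\in\pl\Om$, the interior test still applies whenever the chosen trajectory stays in $\bar\Om$; otherwise I use trajectories with constant control $v(s)\equiv v_0$ and multiplier $l(s)\equiv l_0>0$ for which the Skorokhod resolution forces $\eta$ to remain on $\pl\Om$. Expanding as above and combining the resulting inequalities with the convex duality $B(x_0,p)\le 0\iff\gamma\cdot p\le G(x_0,\gamma)$ for all $\gamma\in\gG(x_0)$, which is a consequence of (A5), produces the alternative $B(x_0,D\phi(x_0,t_0))\le 0$.

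The interior analysis is routine, essentially the classical verification that the value function of an optimal control problem is a viscosity subsolution of its Bellman equation, and the passage from \eqref{plan:subact} for the $u_\gl$'s to \eqref{plan:subact} for $u$ uses only the elementary additivity of the infimum. The main technical obstacle lies in the boundary case: one must show that the Skorokhod family $\SP_\gd(x_0)$ is sufficiently rich to realize every reflection direction $\gamma\in\gG(x_0)$ with arbitrary multiplier $l_0>0$, and that the convex duality between $B$ and $G$ (for which (A5) is essential) combines with the Fenchel pairing between $H$ and $L$ to extract the Neumann dichotomy ``$\phi_t+H\le 0$ or $B\le 0$'' from the scalar inequality obtained by expanding \eqref{plan:subact}.
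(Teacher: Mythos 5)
Your first step is correct, and it is a genuinely different route from the paper's: by comparison (Theorem \ref{thm:comparison}) with the value function of Theorem \ref{i:visco-en} started at time $\tau$, each $u_\gl$, and hence the infimum $u$, satisfies the pathwise (sub-dynamic-programming) inequality along every element of $\SP$; the paper instead linearizes the boundary condition through the selection Lemma \ref{i:selec} and quotes the convexity theorem for \emph{linear} Neumann conditions, \cite[Theorem 2.8]{I2}. The gap is in your second step, where you declare the passage from the pathwise inequality to the viscosity inequalities ``routine''. It is not, for two concrete reasons. First, in the interior case your Taylor-expansion argument needs $\int_0^\gd L(\eta(s),-v_0)\,\d s\le \gd\,L(x_0,-v_0)+o(\gd)$, i.e.\ upper semicontinuity of $L(\cdot,-v_0)$ along the chosen segment; but under (A1)--(A2) the Lagrangian is only lower semicontinuous in $x$ and may take the value $+\infty$ (the paper states exactly this after \eqref{i:2-8}), so the averaged integral may strictly exceed $L(x_0,-v_0)$ and the inequality $\phi_t\le D\phi\cdot v_0+L(x_0,-v_0)$ does not follow for a fixed constant control. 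Second, at the boundary the trajectories you propose, with constant $v_0$ and constant multiplier $l_0>0$, are pinned to $\pl\Om$; the simplest admissible ones have $\dot\eta=0$ and yield $\phi_t(x_0,t_0)\le L(x_0,-l_0\gamma)+l_0G(x_0,\gamma)$, an inequality that contains no information about $D\phi(x_0,t_0)$ and therefore cannot, even combined with the duality $B(x_0,p)=\sup_{\gamma}(\gamma\cdot p-G(x_0,\gamma))$, produce the dichotomy ``$\phi_t+H\le 0$ or $B\le 0$''; boundary trajectories that do move reintroduce the semicontinuity problem above, together with the compatibility constraints of the Skorokhod dynamics.

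The repair is in the paper itself: the subsolution half of the proof of Theorem \ref{i:visco-en} uses nothing about $U$ beyond precisely your pathwise inequality (plus upper semicontinuity at a strict maximum point), and it proceeds by contradiction: assuming $\phi_t+H>\gep$ and $B>\gep$ near $(\hat x,\hat t)$, it linearizes $B$ via Lemma \ref{i:selec} and invokes \cite[Lemma 5.5]{I2} to construct a reflected trajectory \emph{adapted to the test function}, along which the Fenchel--Young inequality is an $\gep$-equality pointwise; this bypasses any continuity of $L$ in $x$ and handles the boundary reflection simultaneously. If you replace your final paragraph by running that argument verbatim, with your inequality in place of the dynamic programming principle for $U$, the proof closes; it is then a control-theoretic alternative to the paper's shorter reduction to the linear Neumann convexity theorem.
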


\begin{proof}Set $Q=\bar\Om\tim(0,\,\infty)$. 
Fix $(\x,\t)\in Q$ and $\phi\in C^1(Q)$, and assume that 
$u-\phi$ attains a strict maximum at $(\x,\t)$. We may assume that $\phi$ has the form:  
$\phi(x,t)=\psi(x)+\chi(t)$ for some functions $\psi$ and $\chi$. 
Fix any $\gep>0$. By Lemma \ref{i:selec}, there exists $(\gamma,\,g)\in 
C(\pl\Om,\,\R^{n+1})$ such that $(\gamma(x),\,g(x))\in\cG(x)$ 
and $B(x,\,D\psi(x))<\gep +\gamma(x)\cdot D\psi(x)-g(x)$ for all $x\in\pl\Om$. 
By this first condition, we see that the functions $u_\gl$, with $\gl\in\gL$, 
are subsolutions of 
\begin{equation}\label{i:5-1}
\left\{
\begin{aligned}
&u_t(x,t)+H(x,Du(x,t))=0 \ \ \text{ in } \ Q,\\ 
&\gamma(x)\cdot Du(x,t)=g(x) \ \ \text{ on } \ \pl\Om\tim(0,\infty). 
\end{aligned} \right.
\end{equation}
By \cite[Theorem 2.8]{I2}, 
we find that $u$ is a subsolution of \eqref{i:5-1}, which implies 
that either $\chi_t(\t)+H(\x,D\psi(\x))\leq 0$, or 
$\x\in\pl\Om$ and $\gamma(\x)\cdot D\psi(\x)-g(\x)\leq 0$. But, this last 
inequality guarantees that $B(\x,\,D\psi(\x))<\gep$. Since $\gep>0$ is arbitrary, 
we see that either $\chi_t(\t)+H(\x,\,D\psi(\x))\leq 0$, or 
$\x\in\pl\Om$ and $B(\x,\,D\psi(\x))\leq 0$. Hence, $u$ is a subsolution of 
\eqref{i:2-9}, \eqref{i:2-10}.   
\end{proof}

The above proof reduces the nonlinear boundary condition to the case of  
a family of linear Neumann conditions. One can prove the above lemma 
without such a linearization procedure by treating directly the nonlinear condition and 
using Lemma \ref{i:localized-main} 
as in the proof of \cite[Theorem 2.8]{I2}.    

It is worthwhile to noticing that another convexity lemma is valid. That is, 
if $u$ and $v$ are subsolutions of \eqref{i:2-9}, \eqref{i:2-10}, then 
so is the function $\gl u+(1-\gl)v$, with $0<\gl<1$.  

Note that the propositions, corresponding to this convexity lemma and Lemma \ref{i:inf-en}, 
are valid for (DBC).

\subsection{Convergence to asymptotic solutions}

In this section, we give the second proof of Theorem~\ref{thm:large-time} for (CN). We write $Q=\bar\Om\tim(0,\,\infty)$ throughout this section. 

We define the function $u_\infty$ on $\bar\Om$ by 
\begin{multline}\label{i:6-1}
u_\infty(x)=\inf\{\phi(x)\mid \phi\in\Lip(\bar\Om),\ \phi \text{ is a solution of (E1) with }\\ 
a=0,\ 
\phi\geq u_0^- \text{ on }\bar\Om\},
\end{multline}
where $u_0^-$ is the function on $\bar\Om$ given by 
\begin{multline*}
u_0^-(x)=\sup\{\psi(x)\mid \psi\in\Lip(\bar\Om),\ \psi 
\text{ is a subsolution of (E1) with }\\
a=0, \ \psi\leq u_0 \text{ on }\bar\Om\}. \end{multline*}
It is a standard observation that $u_0$ and $u_\infty$ are Lipschitz continuous 
functions on 
$\bar\Om$ and are, respectively,  
a subsolution and a supersolution of (E1), with $a=0$. Moreover, using Lemma \ref{i:inf-en}, 
we see that $u_\infty$ is a solution of (E1), with $a=0$.  

\begin{lem}\label{i:formula1}We have \ 
$u_\infty(x)=\liminf_{t\to\infty}u(x,t)$ \ for all \ $x\in\bar\Om$.  
\end{lem}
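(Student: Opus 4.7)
Set $w(x):=\liminf_{t\to\infty}u(x,t)$, and split the equality $w=u_\infty$ into the two inequalities $w\geq u_\infty$ and $w\leq u_\infty$.

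For $w\geq u_\infty$, I first verify $w\geq u_0^-$ on $\bar\Om$. If $\psi\in\Lip(\bar\Om)$ is a subsolution of {\rm (E1)} with $\psi\leq u_0$, then the stationary extension $(x,t)\mapsto\psi(x)$ is a subsolution of {\rm (CN)} with initial data $\leq u_0$, so Theorem~\ref{thm:comparison} yields $\psi\leq u$ on $\cQ$ and hence $\psi\leq w$; taking the supremum over such $\psi$ gives $u_0^-\leq w$. Next, since the shifted family $\{u(\cdot,\cdot+T_k)\}_{T_k\to\infty}$ is uniformly Lipschitz and uniformly bounded by Theorem~\ref{thm:existence} and Proposition~\ref{prop:bound}, the standard stability of viscosity supersolutions under half-relaxed lower limits shows that $w$ is itself a supersolution of {\rm (E1)}. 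A Perron-type argument then identifies $u_\infty$ with the infimum of all supersolutions of {\rm (E1)} lying above $u_0^-$: indeed, given any such supersolution, the maximal subsolution of {\rm (E1)} sandwiched between $u_0^-$ and it is in fact a solution, by Perron's method, and it lies in between. Consequently $u_\infty\leq w$.

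For the reverse inequality $w\leq u_\infty$, fix $x\in\bar\Om$ and invoke Theorem~\ref{i:exist-extremal-sn} with $\phi=u_\infty$ to produce a triple $(\eta,v,l)\in\SP(x)$ with $\eta\in\Lip([0,\infty),\R^n)$ and $(v,l,f)\in L^\infty([0,\infty),\R^{n+2})$, where $f:=F(\eta,v,l)$, satisfying for every $t>0$
\[
u_\infty(x)-u_\infty(\eta(t))=\int_0^t\bigl(L(\eta(s),-v(s))+f(s)\bigr)\,\d s.
\]
Feeding this triple into the variational formula \eqref{i:2-8} for $u(x,t)$ yields
\[
u(x,t)\leq u_\infty(x)+(u_0-u_\infty)(\eta(t)),
\]
so $w(x)\leq u_\infty(x)$ reduces to
\[
\liminf_{t\to\infty}(u_0-u_\infty)(\eta(t))\leq 0.
\]

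Proving this last bound is the main obstacle. The plan is to extract, by compactness of $\bar\Om$, a sequence $t_n\to\infty$ with $\eta(t_n)\to y^*\in\bar\Om$, and then to show that any such limit point must lie in the touching set $\cA:=\{y\in\bar\Om\mid u_\infty(y)=u_0^-(y)\}$. Indeed, if $u_\infty$ stayed strictly above $u_0^-$ in a neighborhood of the asymptotic orbit, a localized downward modification of $u_\infty$ (relying on convexity of $H$ and $B$ together with the infimum-stability given by Lemma~\ref{i:inf-en}) would produce a new solution of {\rm (E1)} still dominating $u_0^-$ but strictly below $u_\infty$, contradicting the minimality of $u_\infty$ established in the first half. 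A complementary calibration argument, using Theorem~\ref{i:exist-p-n} to select a momentum along $\eta$ and exploiting the defining maximality of $u_0^-$ as the largest subsolution below $u_0$, forces $u_0^-=u_0$ on $\cA$. Combining these two facts gives $(u_0-u_\infty)(y^*)=u_0(y^*)-u_0^-(y^*)=0$, which establishes the required $\liminf$ bound and completes the proof.
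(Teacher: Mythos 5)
Your first half is correct and is a standard way to get the inequality $\liminf_{t\to\infty}u(\cdot,t)\ge u_\infty$: comparison with stationary subsolutions gives $w:=\liminf_{t\to\infty}u(\cdot,t)\ge u_0^-$, stability under the lower half-relaxed limit (legitimate after reducing to $u_0\in\W(\Om)$ via \eqref{eq:contsol}, so that the $u(\cdot,t)$ are equi-Lipschitz) shows $w$ is a supersolution of (E1), and a Perron sandwich between $u_0^-$ and $w$ yields a solution of (E1) above $u_0^-$ and below $w$, whence $u_\infty\le w$. (The paper itself gives no proof of this lemma and defers to \cite[Proposition 4.4]{I3}; an equivalent route is to use the solution issued from $u_0^-$, which is nondecreasing in time and converges to $u_\infty$.)

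The second half has a genuine gap, and the strategy cannot be repaired as written. The inequality $u(x,t)\le u_\infty(x)+(u_0-u_\infty)(\eta(t))$ is correct, but the bound you then need, $\liminf_{t\to\infty}(u_0-u_\infty)(\eta(t))\le 0$, is false in general, because the claim that $u_0^-=u_0$ on the touching set $\{u_\infty=u_0^-\}$ is false. That set contains the Aubry set $\cA$ (on which $u_\infty=u_0^-$), and the limit points of the calibrated curve $\eta$ lie in $\cA$; but $u_0$ may be far above $u_0^-$ there. For instance, with the homogeneous Neumann condition and $H(x,p)=|p|^2-W(x)$, $W\ge 0$ vanishing at exactly one point $y_0$ (so $c=0$ and $\cA=\{y_0\}$), choose $u_0$ with a very deep minimum at some $z_0\ne y_0$: then $u_0^-(y_0)\le d(y_0,z_0)+u_0(z_0)\ll u_0(y_0)$, while $\eta(t)\to y_0$, so $(u_0-u_\infty)(\eta(t))$ stays bounded away from $0$ from above. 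The underlying reason is that for large $t$ the nearly optimal trajectories for $u(x,t)$ do not follow the calibrated curve of $u_\infty$ up to time $t$; they linger near $\cA$ and then, on a final time interval, run to a point where $u_0$ itself (not $u_0^-$) is small. This is exactly what the proof cited by the paper does (and it is the machinery behind the formulas in the last subsection): one appends to $\eta|_{[0,t]}$ a finite-time Skorokhod trajectory from $\eta(t)$ to a point $z$ with action within $\gep$ of $d(\eta(t),z)$ and with $d(\eta(t),z)+u_0(z)\le u_0^-(\eta(t))+\gep$, which gives
\[
u(x,t+T_1)\le u_\infty(x)-u_\infty(\eta(t))+u_0^-(\eta(t))+2\gep\le u_\infty(x)+2\gep
\]
simply because $u_0^-\le u_\infty$ everywhere; no identification of $u_0$ with $u_0^-$ is needed, but one does need the variational characterization of $d$ (equivalently of $u_0^-$) in the Skorokhod setting, which your proposal does not supply. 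Finally, your auxiliary ``localized downward modification'' of $u_\infty$ is not licensed by Lemma~\ref{i:inf-en}: that lemma concerns infima of globally defined subsolutions, and lowering a solution only on a neighborhood destroys the supersolution property on the edge of the modified region; so even the statement that limit points of $\eta$ lie in $\{u_\infty=u_0^-\}$ is left unproved in your argument (it is a genuine weak KAM fact, not a consequence of minimality via truncation).
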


For the proof of this lemma, we refer to the proof of 
\cite[Proposition 4.4]{I3}, which can be easily adapted to  
the present situation, and we skip it here.

\begin{proof}[Proof of Theorem {\rm \ref{thm:large-time}}] 
We show that 
\begin{equation}\label{i:6-2}
\lim_{t\to \infty} u(x,t)=u_\infty(x) \ \ \text{ uniformly for } \ x\in\bar\Om. 
\end{equation}



Therefore, in order to prove \eqref{i:6-2}, 
it is enough to show the pointwise convergence in \eqref{i:6-2}. 
Moreover, by Lemma \ref{i:formula1}, we need only to show that 
\[
\limsup_{t\to\infty}u(x,t)\leq u_\infty(x) \ \ \text{ for all } \ x\in\bar\Om. 
\]

Now, we fix any $x\in\bar\Om$. Since $u_\infty$ is a solution of 
(E1), with $a=0$, by Theorem \ref{i:exist-extremal-sn}, 
there is an extremal triple $(\eta,v,l)\in\SP(x)$  
such that, if we set $f=F(\eta,\,v,\,l)$, then we have 
\[
u_\infty(\eta(0))-u_\infty(\eta(t))=\int_0^t\big(L(\eta(s),-v(s))+f(s)\big)\d s
\ \ \text{ for all }t\geq 0,
\]
which is equivalent to the condition that $-\fr{\d}{\d t}u_\infty\circ \eta(t)=
L(\eta(t),-v(t))+f(t)$ for a.e. $t\geq 0$. 
Moreover, we have $\dot\eta,\,v\in L^\infty([0,\,\infty),\R^n)$ 
and $l,\,f\in L^\infty([0,\,\infty),\R)$. 
By Theorem \ref{i:exist-p-n}, 
there exists a function $p\in L^\infty([0,\,\infty),\R^n)$ such that 
for a.e. $t\in[0,\,\infty)$,\, $\fr{\d}{\d t}u_\infty\circ \eta(t)=p(t)\cdot \dot\eta(t)$,\, 
$H(\eta(t),\,p(t))\leq 0$\, and\, $B(\eta(t),\,p(t))\leq 0$\, if\, $\eta(t)\in\pl\Om$.  
Observe that  
$l(t)B(\eta(t),\,p(t))\geq (v-\dot\eta)(t)\cdot p(t)-f(t)$ 
for a.e. $t\in[0,\,\infty)$. 

Next, combining the above relations together with 
the Fenchel-Young inequality, 
$-w\cdot q\leq H(y,q)+L(y,-w)$,  
we observe that if we set $\xi=v-\dot\eta$, then for a.e. $t\geq 0$, 
\begin{align*}
-&\fr{\d}{\d t}u_\infty(\eta(t))
=-p(t)\cdot\dot\eta(t) 
=-p(t)\cdot(v(t)-\xi(t)) \\ 
&\leq  H(\eta(t),\,p(t))+L(\eta(t),\,-v(t))+p(t)\cdot\xi(t) \\
&\leq L(\eta(t),\,-v(t))+p(t)\cdot\xi(t) \\
&\leq   L(\eta(t),\,-v(t))+l(t)B(\eta(t),\,p(t))+f(t) \\
&\leq L(\eta(t),\,-v(t))+f(t)
= -\fr{\d}{\d t}u_\infty(\eta(t)). 
\end{align*}
Thus, all the inequalities above are indeed equalities. 
In particular, we find that  
$-v(t)\cdot p(t)=H(\eta(t),\,p(t))+L(\eta(t),\,-v(t))
=L(\eta(t),\,-v(t))$ for a.e. $t\geq 0$, 
which shows that $H(\eta(t),\,p(t))= 0$ and 
$-v(t)\in \Dmp H(\eta(t),\,p(t))$ for a.e. $t\geq 0$.

We here consider only the case when (A7)$_+$ is valid. It is left to the reader to check 
the other case when (A7)$_-$ holds. 

The argument outlined below is parallel to the last half of the proof of 
\cite[Theorem 1.3]{I3}. 
Since (A7)$_+$ is assumed, there exist 
a constant $\gd_0>0$ and a function $\go_0\in C([0,\,\infty))$ satisfying $\go_0(0)=0$  
such that for any $0<\gd<\gd_0$ and $(y,\,z)\in\bar\Om\tim\R^n$, if 
$H(y,q)=0$ and $z\in \Dmp H(y,q)$ for some $q\in\R^n$, then 
\[
L(y,\,(1+\gd)z)\leq (1+\gd)L(y,\,z)+\gd\go_0(\gd). 
\]
This ensures that for a.e. $t\geq 0$ and all $0<\gd<\gd_0$,   
\begin{equation}\label{i:6-3}
L(\eta(t),-(1+\gd)v(t))\leq (1+\gd)L(\eta(t),-v(t))+\gd\go_0(\gd).
\end{equation}


We fix $\gep>0$, and note (see for instance the proof of \cite[Theorem 1.3]{I3}) 
that there is a positive constant 
$T_0$ and, for each $y\in\bar\Om$, 
a constant $0<T(y)\leq T_0$ such that \ 
$u(y,\,T(y))<u_\infty(y)+\gep$. 

We choose $t_0>T_0$ so that 
$T_0/(t_0-T_0)<\gd_0$. Fix any $t\geq t_0$, and set 
$y=\eta(t)$, $T=T(y)$, $S=t-T$ and $\gd=(t-S)/S$. 
Note that $\gd=T/(t-T)<\gd_0$
and $\gd\to 0$ as $t\to\infty$. 
We set \ 
$\eta_\gd(t)=\eta((1+\gd)t), \,  
v_\gd(t)=(1+\gd)v((1+\gd)t), \,  
l_\gd(t)=(1+\gd)l((1+\gd)t)$ and   
$f_\gd(t)=(1+\gd)f((1+\gd)t)$ for $t\geq 0$. 
Using \eqref{i:6-3} and noting that $(1+\gd)S=t$ and $\gd S=T\leq T_0$, we get 
\[
\int_0^{S} L(\eta_\gd(s),-v_\gd(s))\d s 
\leq \int_0^t L(\eta(s),-v(s))\d s+T_0\go_0(\gd). 
\]
Hence, noting that 
$(\eta_\gd,v_\gd,l_\gd)\in \SP(x)$ and $f_\gd=F(\eta_\gd,v_\gd,l_\gd)$ and using  
the dynamic programming principle, we find that
\begin{align*}
u(x,t)\leq\aln \int_0^{S}\big(
L(\eta_\gd(s),-v_\gd(s))+f_\gd(s)\big)\d s+u(\eta_\gd(S),\,t-S) \\
=\aln\int_0^t\big(L(\eta(s),-v(s))+f(s)\big)\d s +\gth\go_0(\gd) +u(y,T)\\
<\aln u_\infty(x)+T_0\go_0(\gd)+\gep,
\end{align*}
which implies that $\limsup_{t\to\infty}u(x,t)\leq u_\infty(x)$. 
The proof is now complete. 
\end{proof}
\medskip


Finally we briefly sketch some arguments in order to explain how to 
adapt the dynamical approach to Theorem \ref{thm:large-time} for (CN) 
to  that of Theorem \ref{thm:large-time} for (DBC). 
As usual, we assume that $c_*=0$ and let 
$u_\infty$ be the same function as in \eqref{i:6-1}. 
We infer that $\liminf_{t\to\infty}u(x,t)=u_\infty(x)$ for all 
$x\in\bar\Om$. To prove the inequality $\limsup_{t\to\infty}u(x,t)\leq u_\infty(x)$ 
for $x\in\bar\Om$, we fix $x\in\bar\Om$ and $\gep>0$, and 
select a triple $(\eta,v,l)\in\SP(x)$, a constant $T_0>0$ and a family 
$\{T(y)\}_{y\in\bar\Om}\subset (0,\,T_0]$ 
as in the dynamical approach. 

We fix $t>0$ and introduce the function $\tau$ on $[0,\,\infty)$ given by 
$\tau(s)=t-\int_0^s(1+l(r))\d r$. Define the constant $\gs\in (0,\,t]$ by 
$\tau(\gs)=0$. Since $l\in L^\infty([0,\,\infty),\R)$, we have $\gs\to \infty$ 
as $t\to\infty$. We set $y=\eta(\gs)$, $T=T(y)$, $S=\gs-T$ and $\gd=(\gs-S)/S$. 
We note that $\gd S=\gs-S=T$ and that, as $t\to\infty$, $S=\gs-T\to\infty$ 
and $\gd=T/S \to 0$. We assume henceforth 
that $t$ is large enough so that $\gd<\gd_0$. 

We define $\eta_\gd$, $v_\gd$, $l_\gd$ and $f_\gd$ 
as in the dynamical approach for (CN). 
We define the function $\tau_\gd$ on $[0,\,\infty)$ 
by $\tau_\gd(s)=t-\int_0^s(1+l_\gd(r))\d r$ 
for $s\geq 0$. It is easily seen that $\tau_\gd(S)=T$. 
Then we compute similarly that
\[
\int_0^S L(\eta_\gd(s),-v_\gd(s))\d s
\leq\int_0^{\gs}L(\eta(s),-v(s))\d s+\gd S\go_0(\gd),
\]
where $\go_0\in C([0,\,\infty))$ is a function satisfying $\go_0(0)=0$, and
\begin{align*}
u(x,t)\leq\aln \int_0^S\big(L(\eta_\gd(s),-v_\gd(s))+f_\gd(s)\big)\d s
+u(\eta_\gd(S),\,\tau_\gd(S))\\
\leq\aln \int_0^\gs\big(L(\eta(s),-v(s))+f(s)\big)\d s+T_0\go_0(\gd)
+u(\eta(\gs),\,T)\\
<\aln u_\infty(x)+T_0\go_0(\gd)+\gep,
\end{align*}
from which we conclude that $\limsup_{t\to\infty}u(x,t)\leq u_\infty(x)$ 
for all $x\in\bar\Om$. 

\subsection{A formula for $u_\infty$}

Once the additive eigenvalues 
of (E1) and (E2) are normalized 
so that $c=0$, the correspondence 
between the initial data $u_0$ and the asymptotic solution $u_\infty$ 
is the same for both (CN) and (DBC).   

We assume that $c=0$, and present in this section 
another formula for the function $u_\infty$ given by 
\eqref{i:6-1}.

We introduce the Aubry (or, Aubry-Mather) set $\cA$ for (E1), with $a=0$. 
We first define the function $d\in\Lip(\bar\Om\tim\Om)$ by
\[
d(x,y)=\inf\{\psi(x)-\psi(y)\mid \psi\text{ is a subsolution of }{\rm (E1)}, 
\text{ with }a=0\},
\] 
and then the Aubry (or, Aubry-Mather) set $\cA$ for (E1), with $a=0$,  
as the subset of $\bar\Om$ consisting of those points $y$
where the function $d(\cdot,\,y)$ is a solution of (E1), with $a=0$.  

\begin{thm}The function $u_\infty$ given by \eqref{i:6-1} is represented as   
\[
u_\infty(x)=\inf\{d(x,y)+d(y,z)+u_0(z)\mid z\in\bar\Om,\ y\in\cA\}. 
\]
\end{thm}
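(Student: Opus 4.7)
The plan is to prove $u_\infty(x)=v(x)$, where $v(x):=\inf\{d(x,y)+d(y,z)+u_0(z)\mid y\in\cA,\ z\in\bar\Om\}$. I would first collapse the double infimum using the Hopf--Lax-type identity
$$u_0^-(y)=\inf_{z\in\bar\Om}\bigl(d(y,z)+u_0(z)\bigr),$$
so that $v(x)=\inf_{y\in\cA}(d(x,y)+u_0^-(y))$. This identity is established separately: the inequality $u_0^-(y)\leq\inf_z(d(y,z)+u_0(z))$ follows from the subsolution inequality $\psi(y)\leq\psi(z)+d(y,z)\leq u_0(z)+d(y,z)$ valid for any subsolution $\psi\leq u_0$ of (E1) with $a=0$; the reverse uses the stationary analogue of Lemma~\ref{i:inf-en} to see that the right-hand side is itself a subsolution (each $d(\cdot,z)$ being a subsolution), combined with the elementary bound $d(x,x)=0$ (giving domination by $u_0$ upon taking $z=x$).

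For the inequality $u_\infty\geq v$ I would invoke the weak KAM representation formula
$$\phi(x)=\inf_{y\in\cA}\bigl(d(x,y)+\phi(y)\bigr)$$
for any solution $\phi\in\Lip(\bar\Om)$ of (E1) with $a=0$. The easy direction $\phi(x)\leq d(x,y)+\phi(y)$ comes from $\phi$ being a subsolution; the reverse is the nontrivial statement that the infimum is attained on $\cA$, and is derived from the extremal triples produced by Theorem~\ref{i:exist-extremal-sn}: running an optimal curve $\eta$ starting from $x$ to infinity, one extracts a limit point $\ol y\in\cA$ and shows that $d(x,\ol y)$ is realized by the integral of the Lagrangian along $\eta$. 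Applying this representation to $\phi=u_\infty$ and using $u_\infty\geq u_0^-$ immediately yields $u_\infty(x)\geq\inf_{y\in\cA}(d(x,y)+u_0^-(y))=v(x)$.

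For the reverse inequality $u_\infty\leq v$, I would verify that $v$ is a solution of (E1) with $a=0$ satisfying $v\geq u_0^-$, and then invoke the minimality built into the definition \eqref{i:6-1}. The bound $v\geq u_0^-$ is immediate from the subsolution inequality $u_0^-(x)\leq d(x,y)+u_0^-(y)$. The subsolution property of $v$ follows from the stationary analogue of Lemma~\ref{i:inf-en}, since by definition each $d(\cdot,y)$ with $y\in\cA$ is a solution of (E1), hence a subsolution, and $v$ is an infimum of the family $\{d(\cdot,y)+u_0^-(y)\}_{y\in\cA}$. For the supersolution property, the key observation is that $v(y_0)=u_0^-(y_0)$ for every $y_0\in\cA$ (taking $y=y_0$ and using $d(y_0,y_0)=0$, combined with $v\geq u_0^-$); the viscosity supersolution condition at a general point $\ol x\in\bar\Om$ is then checked by selecting a near-optimal $y\in\cA$ in the infimum defining $v(\ol x)$ and propagating information along the extremal curve joining $\ol x$ to $y$ given by Theorem~\ref{i:exist-extremal-sn}.

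The main obstacle is the pair of weak KAM ingredients used above: the representation $\phi(x)=\inf_{y\in\cA}(d(x,y)+\phi(y))$ for solutions of (E1), and the verification that $v$ is a supersolution. Both rely on adapting the classical Fathi--Davini--Siconolfi arguments to the present nonlinear Neumann setting. The presence of boundary points in $\cA$ forces a careful use of the localized subsolution approximations of Lemma~\ref{i:localized-main} and the derivative-along-curves result of Theorem~\ref{i:exist-p-n}, while the Skorokhod reflection developed in Section~\ref{DSA} is needed to make sense of the extremal curves approaching $\cA$ through the boundary.
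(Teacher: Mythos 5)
The paper does not actually prove this theorem: it explicitly defers to \cite[Proposition 4.4]{I3} and \cite[Proposition 6.3]{BM}, where the formula is established for the linear Neumann condition. Your outline reconstructs exactly the argument underlying those references, so structurally you are on the intended route: collapse the double infimum via $u_0^-(y)=\inf_{z\in\bar\Om}\big(d(y,z)+u_0(z)\big)$; show that $v(x):=\inf_{y\in\cA}\big(d(x,y)+u_0^-(y)\big)$ is a solution of (E1) with $a=0$ dominating $u_0^-$, so that $u_\infty\le v$ by the minimality in \eqref{i:6-1}; and obtain $u_\infty\ge v$ from the representation $\phi(x)=\inf_{y\in\cA}\big(d(x,y)+\phi(y)\big)$, valid for every solution $\phi$, applied to $\phi=u_\infty$ together with $u_\infty\ge u_0^-$. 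Two remarks on the details: the properties of $d$ you invoke ($\psi(x)\le \psi(y)+d(x,y)$ for subsolutions $\psi$, $d(\cdot,y)$ a subsolution, $d(x,x)=0$) correspond to reading the definition of $d$ with a supremum over subsolutions, as in \cite{I2,I3}; and for the supersolution property of $v$ you do not need extremal curves at all: since $\cA$ is closed (hence compact) and the functions $d(\cdot,y)+u_0^-(y)$, $y\in\cA$, are solutions depending continuously on $y$, the infimum is attained at every point and the standard touching-from-below argument gives the supersolution (including the boundary) inequality directly.

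The genuine gap, judged against a complete proof, is the representation inequality $\phi(x)\ge\inf_{y\in\cA}\big(d(x,y)+\phi(y)\big)$ itself. Your sketch says one ``extracts a limit point $\ol y\in\cA$'' of the curve furnished by Theorem~\ref{i:exist-extremal-sn}, but membership of such limit points in the Aubry set is precisely the hard statement (and it presupposes $\cA\neq\emptyset$); compactness of $\bar\Om$ alone does not give it. What does follow readily from Theorem~\ref{i:exist-p-n} and the Fenchel--Young inequality, as in Section~\ref{DSA}, is the estimate $d(x,\eta(t))\le\int_0^t\big(L(\eta(s),-v(s))+f(s)\big)\d s$ along admissible triples; but to place accumulation points of $\eta$ in $\cA$ one needs, as in \cite{I2,I3}, either the characterization of $\cA$ by loops of small action with time bounded from below, or an occupation-measure argument, adapted to the Skorokhod dynamics. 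This is exactly the content the paper delegates to the cited propositions, so your plan is sound as a reduction, but that step is asserted rather than proved.
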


The function $u_0^-$ has the formula similar to the above:  
$u_0^-(x)=\inf\{d(x,y)+u_0(y)\mid y\in\bar\Om\}$.
Accordingly, we have
$u_\infty(x)=\inf\{d(x,y)+u_0^-(y)\mid y\in\cA\}$.

We do not give the proof of these formulas, and instead we refer the reader to 
\cite[Proposition 4.4]{I3} and \cite[Proposition 6.3]{BM} where these formulas   
are established for (E1) with the linear Neumann condition.

\section{Appendix}

\subsection{Construction of test-functions}

In order to prove Theorem~\ref{thm:comparison}, we have to build test-functions. For the convenience of the reader, we briefly recall how to construct these functions and refer to \cite{B1,B2,B3,I} for more general cases as well as more details.

\begin{lem}\label{lem:func-C2}
There exists $M_{1}>0$ such that 
for any $\del\in(0,1)$ and $\xi\in\bO$,
there exists a function $C^{\xi, \del}\in C^{1}(\R^{n+1})$ 
such that 
\begin{align*}
&
\big|q+B(\xi, p+C^{\xi, \del}(p,q)\tn(\xi))\big|\le 
m(\del), \\
&|D_{p}C^{\xi, \del}(p,q)|+|D_{q}C^{\xi, \del}(p,q)|\le M_{1} 
\ \textrm{and}\\ 
&D_{q}C^{\xi, \del}(p,q)\le0 
\end{align*}
for any $(p,q)\in\R^{n}\times\R$, where 
$m$ is a modulus. 
\end{lem}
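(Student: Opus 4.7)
The idea is to define $C^{\xi,\delta}(p,q)$ implicitly as the unique $\lambda\in\R$ solving
\[
B_\delta\bigl(\xi,\,p+\lambda\tn(\xi)\bigr)=-q,
\]
where $B_\delta(\xi,\cdot)$ is a smooth mollification of $B(\xi,\cdot)$ at scale $\delta$ in the $p$-variable. The role of the mollification is purely to upgrade $B$ from Lipschitz (by (A4)) to $C^1$ so that the implicit function theorem applies; the parameter $\delta$ simultaneously controls the approximation error $q+B(\xi,p+C^{\xi,\delta}\tn(\xi))$.

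\textbf{Step 1: set-up and basic properties of $B_\delta$.} I would fix $\xi\in\bO$ and set $B_\delta(\xi,p):=(B(\xi,\cdot)*\rho_\delta)(p)$, where $\rho_\delta$ is a standard non-negative mollifier on $\R^n$ with support in $B_\delta$. Assumption (A4) immediately gives both $B_\delta(\xi,\cdot)\in C^\infty(\R^n)$ with Lipschitz constant $M_B$ and the uniform approximation bound $\|B_\delta(\xi,\cdot)-B(\xi,\cdot)\|_{L^\infty(\R^n)}\le M_B\delta$. The crucial point is that the directional-monotonicity (A3) survives convolution: rewriting (A3) as ``$s\mapsto B(\xi,p+s\tn(\xi))-\theta s$ is non-decreasing'' (which follows by shifting the base point in (A3)) and noting that monotonicity is preserved under convolution with a non-negative kernel, one gets
\[
D_pB_\delta\bigl(\xi,p+\lambda\tn(\xi)\bigr)\cdot \tn(\xi)\ge \theta\qquad \hbox{for all }p\in\R^n,\ \lambda\in\R.
\]

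\textbf{Step 2: solvability and $C^1$ regularity.} Set $F(p,q,\lambda):=B_\delta(\xi,p+\lambda\tn(\xi))+q$. By Step~1, for fixed $(p,q)$ the map $\lambda\mapsto F(p,q,\lambda)$ is $C^1$, strictly increasing with slope $\ge\theta>0$, and (being at least $\theta\lambda$-affine in magnitude) surjective onto $\R$. Hence there is a unique $\lambda=C^{\xi,\delta}(p,q)$ with $F=0$, and the implicit function theorem yields $C^{\xi,\delta}\in C^1(\R^{n+1})$ with
\[
D_pC^{\xi,\delta}=-\frac{D_pB_\delta(\xi,p+C^{\xi,\delta}\tn(\xi))}{D_pB_\delta(\xi,p+C^{\xi,\delta}\tn(\xi))\cdot \tn(\xi)},\qquad D_qC^{\xi,\delta}=-\frac{1}{D_pB_\delta(\xi,p+C^{\xi,\delta}\tn(\xi))\cdot \tn(\xi)}.
\]

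\textbf{Step 3: verification of the three estimates.} The error bound follows directly from Step~1:
\[
\bigl|q+B(\xi,p+C^{\xi,\delta}\tn(\xi))\bigr|=\bigl|B-B_\delta\bigr|(\xi,p+C^{\xi,\delta}\tn(\xi))\le M_B\delta,
\]
so $m(\delta):=M_B\delta$ is the desired modulus. For the derivative bounds, the numerators in the formulas of Step~2 satisfy $|D_pB_\delta|\le M_B$ and $1$ respectively (by (A4)), while the denominators are $\ge\theta$ by Step~1; hence $|D_pC^{\xi,\delta}|\le M_B/\theta$ and $|D_qC^{\xi,\delta}|\le 1/\theta$, so that $M_1:=(M_B+1)/\theta$ works uniformly in $\xi$ and $\delta$. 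Finally, positivity of the denominator in the formula for $D_qC^{\xi,\delta}$ immediately gives $D_qC^{\xi,\delta}\le 0$.

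\textbf{Main obstacle.} The only non-routine point is the uniform lower bound $D_pB_\delta\cdot \tn(\xi)\ge\theta$ after mollification, which is what makes the constant $M_1$ independent of $\delta$ (and of $\xi$, since (A3)--(A4) hold with constants independent of $\xi$). This in turn rests on the reformulation of (A3) as a genuine monotonicity statement in the direction $\tn(\xi)$, which is preserved by convolution. Once this is noted, everything else is an application of the implicit function theorem together with the standard $L^\infty$-approximation property of mollifiers of Lipschitz functions.
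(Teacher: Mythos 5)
Your proof is correct, and it takes a slightly different route from the paper's. The paper first solves the exact equation $q+B(\xi,p+C^{\xi}(p,q)\tn(\xi))=0$ with the original (merely Lipschitz) $B$: assumption (A3), read as monotonicity with slope at least $\theta$ in the direction $\tn(\xi)$, gives a unique solution $C^{\xi}(p,q)$, which is Lipschitz in $(p,q)$ with a constant $M_1$ depending only on $M_B$ and $\theta$, and nonincreasing in $q$; the $C^1$ function $C^{\xi,\del}$ is then obtained by mollifying $C^{\xi}$ itself, the smallness of $q+B(\xi,p+C^{\xi,\del}\tn(\xi))$ following from the Lipschitz continuity of $B$ and the uniform closeness of $C^{\xi,\del}$ to $C^{\xi}$. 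You instead regularize the data before solving: you mollify $B(\xi,\cdot)$ in $p$, check that the (A3)-monotonicity survives convolution so that $D_pB_\del\cdot\tn(\xi)\ge\theta$, and then solve the smoothed equation exactly by the implicit function theorem. Both arguments hinge on the same two facts (the reformulation of (A3) as genuine $\theta$-monotonicity along $\tn(\xi)$, and the Lipschitz bound (A4)), and both are complete; what yours buys is explicit formulas for $D_pC^{\xi,\del}$ and $D_qC^{\xi,\del}$, from which the uniform bound $M_1=(M_B+1)/\theta$, the sign condition $D_qC^{\xi,\del}\le0$, and the explicit modulus $m(\del)=M_B\del$ drop out immediately, whereas the paper's order of operations (solve, then mollify the solution) is shorter to state and transfers the structural properties of the exact solution to $C^{\xi,\del}$ without needing to re-verify (A3) for a regularized $B$. (Incidentally, the Lipschitz property of $B$ that you invoke is (A4), which is indeed the relevant hypothesis here, together with (A3).)
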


\begin{proof}
By (A2) and (A3) there exists a function $C^{\xi}\in C(\R^{n}\times\R)$ 
such that 
\begin{align*}
&q+B(\xi, p+C^{\xi}(p,q)\tn(\xi))=0, \\
&|C^{\xi}(p_{1},q_{1})-C^{\xi}(p_{2},q_{2})|\le 
M_{1}(|p_{1}-p_{2}|+|q_{1}-q_{2}|)
\end{align*}
for some $M_{1}>0$. 
Noting that $r\mapsto B(\xi, p+r\tn(\xi))$ is increasing, 
we see that $q\mapsto C^{\xi}(p,q)$ is decreasing for any $p\in\R^{n}$. 
Therefore we see that a regularized function $C^{\xi, \del}$ by 
a mollification kernel satisfies the desired properties. 
\end{proof}

Similarly we can prove 
\begin{lem}\label{lem:func-C}
For any $a>0$, 
there exists $M_{1a}>0$ such that 
for any $b\in\R$, $\del\in(0,1)$, and $\xi\in\bO$, 
there exists a function $C_{a,b}^{\xi, \del}\in C^{1}(\R^{n})$ 
such that 
\begin{align*}
&
\big|b+B(\xi, a\bigl(p+C_{a,b}^{\xi, \del}(-p)\tn(\xi)\bigr))\big|\le 
m_a(\del), \\
&|DC_{a,b}^{\xi, \del}(p)|\le M_{1a}
\end{align*}
for any $p\in\R^{n}$, where 
$m_a$ is a modulus. 
\end{lem}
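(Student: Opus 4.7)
The plan is to follow the same scheme as the proof of Lemma~\ref{lem:func-C2}: solve an implicit equation using the strict monotonicity (A3), derive a uniform Lipschitz estimate using (A4), and then obtain the $C^1$ function by mollification.

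First I would define, for each $\xi\in\bO$, $a>0$, $b\in\R$, a function $C^\xi_{a,b}:\R^n\to\R$ implicitly by the equation
\[
b+B\bigl(\xi,\,a(-q+C^\xi_{a,b}(q)\tn(\xi))\bigr)=0 \qquad \text{for all }q\in\R^n,
\]
which is the natural substitution $q=-p$ in the statement. Existence and uniqueness of $C^\xi_{a,b}(q)$ follow because by (A3) the map $r\mapsto B(\xi,-aq+ar\tn(\xi))$ is continuous and strictly increasing with slope at least $\theta a$, hence a bijection from $\R$ onto $\R$.

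Next I would prove a Lipschitz estimate: if $B(\xi,-aq_1+aC^\xi_{a,b}(q_1)\tn(\xi))=B(\xi,-aq_2+aC^\xi_{a,b}(q_2)\tn(\xi))$, then combining (A3) (applied to the difference coming from changing $C^\xi_{a,b}(q_1)$ to $C^\xi_{a,b}(q_2)$) with (A4) (applied to the difference coming from changing $-aq_1$ to $-aq_2$) gives
\[
\theta a\,|C^\xi_{a,b}(q_1)-C^\xi_{a,b}(q_2)|\le M_B\,a\,|q_1-q_2|,
\]
so $C^\xi_{a,b}$ is Lipschitz with constant $M_B/\theta$, independent of $a$, $b$, and $\xi$. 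I then set $C_{a,b}^{\xi,\del}$ to be the standard mollification of $C^\xi_{a,b}$ at scale $\del$, so that $C_{a,b}^{\xi,\del}\in C^1(\R^n)$, the same Lipschitz bound is preserved, and $\|C_{a,b}^{\xi,\del}-C^\xi_{a,b}\|_{\Li(\R^n)}\le (M_B/\theta)\,\del$.

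Finally I would verify the approximation estimate: by (A4),
\[
\bigl|b+B(\xi,a(p+C_{a,b}^{\xi,\del}(-p)\tn(\xi)))\bigr|
=\bigl|B(\xi,a(p+C_{a,b}^{\xi,\del}(-p)\tn(\xi)))-B(\xi,a(p+C^\xi_{a,b}(-p)\tn(\xi)))\bigr|
\]
is bounded by $a\,M_B\,|\tn(\xi)|\cdot(M_B/\theta)\,\del$, which is of the form $m_a(\del)$ with $m_a(\del)\to 0$ as $\del\to 0$ (the constant in the modulus indeed depends on $a$ through the factor $a$ in the argument of $B$, which is why $M_{1a}$ and $m_a$ are allowed to depend on $a$). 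Taking $M_{1a}:=M_B/\theta$ completes the proof.

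There is no real obstacle here; the only point requiring minor care is the presence of $-p$ rather than $p$ in the statement (handled by the change of variable $q=-p$) and making sure that the $a$-dependence is properly tracked so that the modulus $m_a$ and the bound $M_{1a}$ are indexed correctly. Everything else is a direct transcription of the argument used for Lemma~\ref{lem:func-C2}.
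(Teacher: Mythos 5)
Your proposal is correct and is essentially the paper's own argument: the paper proves Lemma~\ref{lem:func-C2} by exactly this scheme (implicit solution of $q+B(\xi,p+C^{\xi}(p,q)\tn(\xi))=0$ via the monotonicity (A3), a Lipschitz bound via (A4), then mollification) and states that Lemma~\ref{lem:func-C} follows ``similarly,'' which is precisely the adaptation you carry out. The only cosmetic point is to note that $|\tn(\xi)|=|D\rho(\xi)|$ is bounded on the compact set $\bO$, so your bound $aM_B|\tn(\xi)|(M_B/\theta)\del$ is indeed a modulus $m_a(\del)$ uniform in $\xi$ and $b$.
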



Now we are in position to build the test-functions we need. We are going to do it locally, i.e. in a neighborhood of a point $\xi\in\bO$ and we set $\dfxi (x):=\tn(\xi)\cdot x$. 

\begin{lem}\label{lem:coercivity} 
For fixed constants $a, b\in\R$, we denote 
by $C_{a,b}^{\xi, \del}$, $m_a$, and $M_{1a}$
the functions and the constant given in Lemma {\rm \ref{lem:func-C}} for $\del\in(0,1)$ and $\xi\in\bO$. We introduce the function $\chi$ defined, for $Z \in \R^n$, by
\[
\chi(Z):=
\frac{|Z|^{2}}{2\ep^{2}}
-C_{a,b}^{\xi, \del}\bigl(\frac{Z}{\ep^{2}}\bigr)\dfxi (Z)
+\frac{A(\dfxi (Z))^{2}}{\ep^{2}}
\]
for $\ep\in(0,1)$ and $A>0$. 
If $A\ge\max\{M_{1a}^{2},(M_{1a}M_B)/2\theta\}=:M_{2a}$, 
then 
\begin{enumerate}[{\rm (i)}]
\item 
$\displaystyle
\chi(Z)\ge
\frac{|Z|^{2}}{4\ep^{2}}-M_{1a}|Z|
$  
for all $Z \in \R^n$, 

\item
For all $R>0$ there exists a modulus $m=m_{R,a}$ such that 
if $|x-y|/\ep^{2}\le R$, then 
$$
b+B(x, -aD\chi(x-y))
\le m(\del + \ep + |x-\xi| + |y-\xi|)\; ,$$
if $x\in\bO, y\in\cO$ and
$$b+B(y, -aD\chi(x-y))\ge -m(\del +\ep + |x-\xi| + |y-\xi|)
\; ,$$
for $x\in\cO, y\in\bO$. 
\end{enumerate}
\end{lem}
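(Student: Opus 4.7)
For (i), my plan is a direct Young's inequality. From the Lipschitz bound $|DC_{a,b}^{\xi,\del}|\le M_{1a}$ one has $|C_{a,b}^{\xi,\del}(Z/\ep^2)|\le M_{1a}|Z|/\ep^2+|C_{a,b}^{\xi,\del}(0)|$, so a standard split yields
\[
\bigl|C_{a,b}^{\xi,\del}(Z/\ep^2)\,\dfxi(Z)\bigr|\le\frac{|Z|^2}{4\ep^2}+\frac{M_{1a}^2(\dfxi(Z))^2}{\ep^2}+M_{1a}|Z|.
\]
Choosing $A\ge M_{1a}^2$ then allows the middle term to be absorbed into $A(\dfxi(Z))^2/\ep^2$, and the inequality drops out after possibly enlarging $M_{1a}$ to swallow the contribution of $|C_{a,b}^{\xi,\del}(0)|$.

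For (ii), I would first compute the gradient and decompose
\[
-aD\chi(Z)=a\bigl(-p+C(p)\tn(\xi)\bigr)+\frac{a\dfxi(Z)}{\ep^2}\bigl(DC(p)-2A\tn(\xi)\bigr),\qquad p=Z/\ep^2,
\]
with $C:=C_{a,b}^{\xi,\del}$. By Lemma~\ref{lem:func-C}, the principal part satisfies $|b+B(\xi,a(-p+C(p)\tn(\xi)))|\le m_a(\del)$, so everything reduces to controlling the error produced by the ``extra'' term and by the two substitutions $\xi\mapsto x$ and $\tn(\xi)\mapsto\tn(x)$. The key manipulation is to rewrite the extra term, after substituting $\tn(\xi)=\tn(x)+(\tn(\xi)-\tn(x))$, as $-\tfrac{2aA\dfxi(Z)}{\ep^2}\tn(x)$ plus lower-order pieces paying a price $M_B\cdot|\tn(\xi)-\tn(x)|$ times a bounded factor, and then to apply the obliqueness (A3) with $\lambda=-\tfrac{2aA\dfxi(Z)}{\ep^2}$ on top of the Lipschitz control (A4).

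The sign of $\lambda$ is controlled by the $C^1$ regularity of $\bO$: for $x\in\bO$ and $y\in\cO$, a Taylor expansion of $\rho$ at $x$ together with $\rho(x)=0$ and $\rho(y)\le 0$ yields
\[
\dfxi(x-y)=-\rho(y)+o(|x-y|)+O\bigl(|\tn(\xi)-\tn(x)|\,|x-y|\bigr),
\]
so $\dfxi(x-y)\ge -\omega(\ep+|x-\xi|+|y-\xi|)$ for some modulus $\omega$; in other words, $\lambda\le 0$ up to an arbitrarily small error. When $\lambda\le 0$, combining (A3) with the $M_B$-Lipschitz estimate for the $DC(p)$-piece of the extra term gives an upper bound for $B(x,-aD\chi(Z))$ in which the monotonicity term $\theta\lambda$ and the Lipschitz term $M_B\cdot\tfrac{a|\dfxi(Z)|}{\ep^2}M_{1a}$ must be compared: the second requirement $A\ge M_{1a}M_B/(2\theta)$ is exactly what ensures the former wins. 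When $\lambda>0$, i.e.\ in the residual small regime, $|\lambda|$ is itself of modulus size, so both pieces are already absorbed into $m$. A symmetric argument, with the roles of $x$ and $y$ swapped and the normal direction reversed, yields the lower bound in the case $y\in\bO$.

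The delicate point will be separating terms that are genuinely of modulus size in $\del+\ep+|x-\xi|+|y-\xi|$ from those that are merely bounded. The ratio $|\dfxi(x-y)|/\ep^2$ is \emph{a priori} only bounded by $R|\tn(\xi)|$, so a direct Lipschitz bound via (A4) on the whole extra term is not sufficient. The smallness must be extracted either through the one-sided control $\dfxi(x-y)\ge-\omega$ coming from the $C^1$ boundary, or through the continuity modulus $|\tn(\xi)-\tn(x)|$, and it is the balancing of these small factors against the Lipschitz constant $M_B$ via the constraint $A\ge M_{2a}$ that makes the argument close up.
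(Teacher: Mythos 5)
Your proposal is correct and follows essentially the same route as the paper's proof: Young's inequality with $A\ge M_{1a}^2$ for (i), and for (ii) the same gradient decomposition, the treatment of the principal part via Lemma \ref{lem:func-C}, and a case split on the sign of $\dfxi(x-y)$ in which the $C^1$ regularity of $\bO$ (Taylor expansion of $\rho$) gives the one-sided modulus bound while the comparison of the gain $\theta$ against the loss $M_BM_{1a}$ through $A\ge M_{1a}M_B/(2\theta)$ handles the other case. The only cosmetic differences are that the paper expands $\rho$ at the midpoint $(x+y)/2$ rather than at $x$, and that the one-sided estimate must be stated for the ratio $\dfxi(x-y)/\ep^2$ (using $|x-y|\le R\ep^2$), which your closing paragraph correctly identifies as the point where the smallness is extracted.
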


\begin{proof}
We first prove (i). 
Note that 
$C_{a,b}^{\xi, \del}(p)\le C_{a,b}^{\xi, \del}(0)+M_{1a}|p|$ 
for all $p\in\R^{n}$ by Lemma \ref{lem:func-C}. 
Thus, 
\begin{align*}
\chi(Z)
\ge&\, 
\frac{|Z|^{2}}{2\ep^{2}}
-M_{1a}\bigl(\frac{|Z|}{\ep^{2}}+1\bigr)|\dfxi (Z)|
+\frac{A(\dfxi (Z))^{2}}{\ep^{2}} \\
\ge&\, 
\frac{|Z|^{2}}{4\ep^{2}}
-M_{1a}|Z|
+\frac{(\dfxi (Z))^{2}}{\ep^{2}}(A-M_{1a}^{2}) \\
\ge&\, 
\frac{|Z|^{2}}{4\ep^{2}}
-M_{1a}|Z| 
\end{align*}
for all $Z \in \R^n$. 
We have used Young's inequality in the second inequality above. 

We next prove (ii). 
We have for any $x\in\bO$ 
\begin{align*}
-aD\chi(x-y)
=&\, 
a\Bigl(\frac{y-x}{\ep^{2}}+C_{a,b}^{\xi, \del}\bigl(\frac{x-y}{\ep^2}\bigr)\tn(\xi)\Bigr) \\
{}&\, 
+a\frac{(\dfxi (x)-\dfxi (y))}{\ep^2}\bigl(DC_{a,b}^{\xi, \del}\bigl(\frac{x-y}{\ep^2}\bigr)-2A\tn(\xi)\bigr).
\end{align*}
We divide into two cases: 
(a) 
$\dfxi (x)-\dfxi (y) \le 0$; 
(b) 
$\dfxi (x)-\dfxi (y)>0$.

We first consider Case (a).
Using (A0) and a Taylor expansion at the point $(x+y)/2$, 
it is easy to see that
\[
0\leq \rho  (x)- \rho  (y) =
\tn  ((x+y)/2) \cdot (x-y)
 + o(|x-y|) \ 
\textrm{for} \ x\in\bO.  
\]
Using the continuity of $D\rho$, and therefore of $\tn$, we see that
\begin{align*}
\dfxi (x)-\dfxi (y)=& \tn(\xi)\cdot(x-y)\\
=& \tn((x+y)/2)\cdot(x-y) + (\tn(\xi)- \tn ((x+y)/2))\cdot(x-y)\\
\geq &  o(|x-y|)-m(|x-\xi| + |y-\xi|)|x-y|\; .
\end{align*}
for some modulus $m$. 
Therefore, taking into account the restriction $|x-y|/\ep^{2}\le R$ and changing perhaps the modulus $m$, we have
$$ \frac{\dfxi (x)-\dfxi (y)}{\ep^2} \geq -m(\ep + |x-\xi| + |y-\xi|) \; ,$$
and this yields
$$ \frac{\dfxi (x)-\dfxi (y)}{\ep^2} \to 0\quad \hbox{and} \quad -a\frac{(\dfxi (x)-\dfxi (y))}{\ep^2}\bigl(DC_{a,b}^{\xi, \del}\bigl(\frac{x-y}{\ep^2}\bigr)-2A\tn(\xi)\bigr)\to 0\; $$
as $\ep\to0$, $x\to\xi$, and $y\to\xi$, 
which implies that 
\begin{align*}
b+B(x, -aD\chi(x-y))
\le&\, 
b+B(x, a\Bigl(\frac{y-x}{\ep^{2}}+C_{a,b}^{\xi, \del}\bigl(\frac{x-y}{\ep^2}\bigr)\tn(\xi)\Bigr))\\
&\,+
M_{B}\Big|
a\frac{(\dfxi (x)-\dfxi (y))}{\ep^2}\bigl(DC_{a,b}^{\xi, \del}\bigl(\frac{x-y}{\ep^2}\bigr)-2A\tn(\xi)\bigr))\Big|\\
\le&\, 
m(\del + \ep + |x-\xi|+|y-\xi|) 
\end{align*}

In Case (b) by (A3), (A4) and Lemma \ref{lem:func-C}, 
we get (changing perhaps the modulus $m$)
\begin{align*}
&
b+B(x, -aD\chi(x-y))\\
\le&\,
b+B(x, a\Bigl(\frac{y-x}{\ep^{2}}
+C_{a,b}^{\xi, \del}\bigl(\frac{x-y}{\ep^2}\bigr)\tn(\xi)\Bigr))
+a\frac{(\dfxi (x)-\dfxi (y))}{\ep^2}\bigl(
M_B|DC_{a,b}^{\xi, \del}|-2A\theta\bigr)\\
\le&\,
b+B(\xi, a\Bigl(\frac{y-x}{\ep^{2}}
+C_{a,b}^{\xi, \del}\bigl(\frac{x-y}{\ep^2}\bigr)\tn(\xi)\Bigr))
+m(|x-\xi|)\\
\le&\,
m(\del + |x-\xi|), 
\end{align*}
since $A\ge (M_{1a}M_B)/2\theta$. 
Gathering the two cases, we have the result. 
Similarly we obtain 
$b+B(y, -aD\chi(x-y))\ge -m(\del +\ep + |x-\xi| + |y-\xi|)$ 
if $y\in\bO$.
\end{proof}

\subsection{Comparison Results for {\rm (CN)} and {\rm (DBC)}}

\begin{proof}[Proof of Theorem {\rm \ref{thm:comparison}}]
We argue by contradiction assuming that there would exist $T>0$ 
such that $\max_{\ol{Q}_T}(u-v)(x,t)>0$, 
where $\QT:=\Om\times(0,T)$.

Let $u^{\gam}$ denote the function
$$u^{\gam}(x,t):=\max_{s\in[0,T+2]}\{u(x,s)-(1/\gam)(t-s)^2\}\; ,$$ 
for any $\gam>0$. This sup-convolution procedure is standard in the theory of viscosity solutions (although, here, it acts only on the time-variable) and it is known that, for $\gam$ small enough, $u^{\gam}$ is a subsolution of (CN) 
in $\Om\times(a_{\gam},T+1)$, where $a_{\gam}:=(2\gam\max_{Q_{T+2}}|u(x,t)|)^{1/2}$ 
(see \cite{B2, CIL} for instance). 

Moreover, it is easy to check that 
$|u^{\gam}_{t}|\le M_{\gam}$ in $\Om\times(a_{\gam},T+1)$ 
and 
therefore by the coercivity of $H$ and the $C^{1}$-regularity of 
$\bO$ we have, for all $x,y\in\cO$, $t,s\in[a_{\gam},T+1]$
\begin{equation}\label{pf:comp:lip}
|u^{\gam}(x,t)-u^{\gam}(y,s)|\le 
M_{\gam}(|x-y|+|t-s|) \ 
\end{equation}
for some $M_{\gam}>0$.
Finally, as $\gam\to 0$, $\max_{\ol{Q}_T}(u^{\gam}-v)(x,t) \to \max_{\ol{Q}_T}(u-v)(x,t)>0$.

Therefore it is enough to consider $\max_{\ol{Q}_T}(u^{\gam}-v)(x,t)$ for $\gam>0$ small enough, and we follow the classical proof by introducing 
$$ 
\max_{\ol{Q}_T}\{(u^{\gam}-v)(x,t)-\eta t\}, 
$$
for $0< \eta \ll 1$. This maximum is achieved at $(\xi,\tau)\in\cO\times[0,T]$, namely
$(u^{\gam}-v)(\xi,\tau)-\eta t
=\max_{\ol{Q}_T}\{(u^{\gam}-v)(x,t)-\eta t\}$. 
Clearly $\tau$ depends on $\eta$ but we can assume that it remains bounded away from $0$, otherwise we easily get a contradiction.

We only consider the case where $\xi\in\bO$. 
We first consider problem (CN).
We introduce the function $\chi_{1}$ defined by : $\chi_1 (Z) := \chi(-Z)$ where $\chi$ is the function given by Lemma~\ref{lem:coercivity} with $a=1$ and $b=0$. It is worth pointing out that, compared to the proof of Lemma~\ref{lem:main-lem}, the change $Z \to -Z$, consists in exchanging the role of $x$ and $y$, which is natural since the variable $x$ is used here for the subsolution while it was corresponding to a supersolution in the proof of Lemma~\ref{lem:main-lem}.

We define the function $\Psi:\cO^{2}\times[0,T]\to\R$ by 
\begin{align*}
\Psi_{1}(x,y,t):=
&\, 
u^{\gam}(x,t)-v(y,t)-\eta t
-\chi_{1}(x-y)-
\al(\rho (x)+\rho (y))\\
&\,
-|x-\xi|^{2}-(t-\tau)^{2}. 
\end{align*}
Let $\Psi_{1}$ achieve its maximum at 
$(\ol{x},\ol{y},\ol{t})\in\cO^{2}\times[0,T]$. 
By standard arguments, we have 
\begin{equation}\label{pf:comp:conv}
\ol{x}, \ol{y}\to \xi 
\ \textrm{and} \ 
\ol{t} \to \tau 
\ \textrm{as} \ \ep\to0 
\end{equation}
by taking a subsequence if necessary. 
In view of the Lipschitz continuity 
\eqref{pf:comp:lip} of $u^\gam$, we have 
\begin{equation}\label{pf:comp:bdd}
|\ol{p}|\le M_{\gam},  
\end{equation} 
where 
$\ol{p}:=(\ol{x}-\ol{y})/\ep^{2}$.

Taking (formally) the derivative of $\Psi_{1}$ with respect to each variable 
$x,y$ at $(\ol{x},\ol{y},\ol{t})$,  we have 
\begin{align*}
D_{x}u^{\gam}(\ol{x},\ol{t})
&=\, 
D_{x}\chi_{1}(\ol{x}-\ol{y})
+2(\ol{x}-\xi)+\al \tn (\ol{x}), \\
D_{y}v(\ol{y},\ol{t})&=\, 
D_{y}\chi_{1}(\ol{x}-\ol{y})-\al \tn (\ol{y}). 
\end{align*}
We remark that 
we should interpret $D_{x}u^{\gam}$ and $D_{y}v$ in the viscosity solution sense here. 
We also point out that the viscosity inequalities we are going to write down below, hold up to time $T$, in the spirit of \cite{B2}, Lemma 2.8, p. 41.

By Lemma \ref{lem:coercivity} 
we obtain 
\begin{align*}
&
B(\ol{x},D_{x}u(\ol{x},\ol{t}))
\ge -m(\del +\ep + |\ol{x}-\xi| + |\ol{y}-\xi|) +\theta\al
>0, \\
&
B(\ol{y},D_{y}v(\ol{y},\ol{t}))
\le m(\del +\ep + |\ol{x}-\xi| + |\ol{y}-\xi|) -\theta\al<0
\end{align*}
for $\ep, \del>0$ which are small enough compared to $\al>0$, 
where $m$ is a modulus.

Therefore, by the definition of viscosity solutions of (CN), using the arguments of User's guide to viscosity solutions \cite{CIL}, there exists 
$a_{1}, a_{2}\in \R$ such that
\begin{align*}
a_{1}+H(\ol{x}, D_{x}u^{\gam}(\ol{x},\ol{t}))
&\, 
\le 0, \\ 
a_{2}+ H(\ol{y}, D_{y}v(\ol{y},\ol{t}))
&\, 
\ge0 
\end{align*}
with $a_{1}-a_{2}
=\eta+2(\ol{t}-\tau)$. 
By \eqref{pf:comp:bdd} 
we may assume that $\ol{p}\to p$
as $\ep\to0$ for some $p\in\R^{n}$ 
by taking a subsequence if necessary. 
Sending $\ep\to0$ and then $\al\to0$ 
in the above inequalities, 
we have a contradiction since $a_{1}-a_{2} \to \eta >0$ while the $H$-terms converge to the same limit. Therefore $\tau$ cannot be assumed to remain bounded away from $0$ and the conclusion follows. \medskip

We next consider problem (DBC). Let $(\xi,\tau)$ be defined as above and let $C^{\xi,\del}$ be the function given by Lemma \ref{lem:func-C2}. 
We define the function $\chi_{2}:\R^{n}\times\R\to\R$ by 
\begin{align*}
\chi_{2}(x-y,t-s):=&
\frac{1}{2\ep^{2}}
\Bigl(|x-y|^{2}+(t-s)^{2}\Bigr)
+C^{\xi,\del}\bigl(\frac{x-y}{\ep^{2}},\frac{t-s}{\ep^{2}}\bigr)(\rho(x)-\rho(y))
\\
&
+\frac{A(\rho(x)-\rho(y))^{2}}{\ep^{2}} 
\end{align*}
for $A\ge M_{1}^{2}$. 
We define the function $\Psi:\cO^{2}\times[0,T]\to\R$ by 
\begin{align*}
\Psi_{2}(x,y,t,s):=
&\, 
u^{\gam}(x,t)-v(y,s)-\eta t
-\chi_{2}(x-y,t-s)+
\al(\rho(x)+\rho(y))\\
&\,
-|x-\xi|^{2}-(t-\tau)^{2}. 
\end{align*}
Let $\Psi_{2}$ achieve its maximum at 
$(\ol{x},\ol{y},\ol{t},\ol{s})\in\cO^{2}\times[0,T]^{2}$ and 
set 
\[
\ol{p}:=\frac{\ol{x}-\ol{y}}{\ep^{2}}, \ 
\ol{q}:=\frac{\ol{t}-\ol{s}}{\ep^{2}}. 
\]

Derivating (formally) $\Psi_{2}$ with respect to each variable 
$t,s$ at $(\ol{x},\ol{y},\ol{t},\ol{s})$,  we have 
\begin{align*}
u^{\gam}_{t}(\ol{x},\ol{t})
&=\, 
\eta+\ol{q}
+D_{q}C^{\xi,\del}(\ol{p},\ol{q})\cdot
\frac{(\rho(\ol{x})-\rho(\ol{y}))}{\ep^{2}}, \\
v_{s}(\ol{y},\ol{s})&=\, 
\ol{q}
+D_{q}C^{\xi,\del}(\ol{p},\ol{q})\cdot
\frac{(\rho(\ol{x})-\rho(\ol{y}))}{\ep^{2}}. 
\end{align*}
We remark that 
we should interpret $u^{\gam}_{t}$ and $v_{s}$ 
in the viscosity solution sense here.

We consider the case where $\ol{x}\in\bO$. 
Note that $D_{q}C^{\xi,\del}(\ol{p},\ol{q})\le0$ 
and then 
we have 
\begin{align*}
&u^{\gam}_{t}(\ol{x},\ol{t})+B(\ol{x}, D_{x}u(\ol{x},\ol{t}))\\
=&\, 
\eta
-D_{q}C^{\xi,\del}(\ol{p},\ol{q})\cdot\frac{\rho(\ol{y})}{\ep^{2}}
+\ol{q}+B(\ol{x},D_{x}u(\ol{x},\ol{t}))\\
\ge&\, 
-m(\del +\ep + |\ol{x}-\xi| + |\ol{y}-\xi|) +\theta\al
>0
\end{align*}
for $\ep, \del>0$ which are small enough compared to $\al>0$. 
In the case where $\ol{y}\in\bO$ we similarly obtain 
\[
v_{t}(\ol{y},\ol{s})+B(\ol{y},D_{y}v(\ol{y}))
\le m(\del+\ep)-\theta\al<0
\]
for $\ep, \del>0$ which are small enough compared to $\al>0$.

The rest of the argument is similar to that given above and 
therefore we omit the details here. 
\end{proof}

\subsection{Existence and Regularity of Solutions of {\rm (CN)} and {\rm (DBC)}}

\begin{proof}[Proof of Theorem {\rm \ref{thm:existence}}]
The existence part being standard by using the Perron's method (see \cite{I0}) , we mainly concentrate on the regularity of solutions when $u_0\in \W(\cO)$. We may choose a sequence 
$\{u_{0}^{k}\}_{k\in\N}\subset C^{1}(\cO)$ 
so that $\|u_{0}^{k}-u_{0}\|_{\Li(\Om)} \le 1/k$ 
and $\|Du_{0}^{k}\|_{\infty}\le C$ for some $C>0$ which is uniform for
all $k\in\N$.  
We fix $k\in\N$. 

We claim that $u_{-}^{k}(x,t):=-M_{1}t+u_{0}^{k}(x)$ 
and $u_{+}^{k}(x,t):=M_{1}t+u_{0}^{k}(x)$ 
are, respectively, a sub and supersolution of (CN) or (DBC) 
with $u_{0}=u_{0}^{k}$ 
for a suitable large $M_{1}>0$. 
We can easily see that $u_{\pm}^{k}$ are 
a sub and supersolution of (CN) or (DBC) in $\Om$ 
if $M_{1}\ge\max\{|H(x,p)|\mid x\in\cO, p\in B(0,C)\}$.

We recall (see \cite{L2, CIL} for instance) that 
if $x \in \bO$, then 
\[
D^{+}u_{-}^{k}(x,t)=\{(Du_{0}^{k}(x)+\lam \tn(x),-M_1)\mid 
\lam\le0\}, 
\]
where $D^{+}u_{-}^{k}(x,t)$ denotes 
the super-differential of $u_{-}^{k}$ at $(x,t)$. 
We need to show that 
\[
\min\{-M_1 + H(x,Du_{0}^{k}(x)+\lam \tn(x)), B(x,Du_{0}^{k}(x)+\lam \tn(x))\} 
\le 0 
\]
for all $\lam\le0$. 
By (A2) it is clear enough that there exists $\bar\lambda < 0$ such that, if $\lambda \leq \bar\lambda$, then 
$B(x,Du_{0}^{k}(x)+\lam \tn(x)) \leq 0$. 
Then choosing 
$M_1 \geq \max\{H(x,p+\lambda \tn(x))\mid  x\in \bO,\ p\in B(0,C),\  
\bar\lambda \leq \lambda \leq 0\}$, 
the above inequality holds. 
A similar argument shows that $u_{+}^{k}$ is a supersolution 
of (CN) for $M_1$ large enough. It is worth pointing out that such $M_1$ is independent of $k$.
We can easily check that $u_{\pm}^{k}$ are  
a sub and supersolution of (DBC) on $\bQ$ too.

By Perron's method (see \cite{I0}) and Theorem \ref{thm:comparison}, 
we obtain continuous solutions of (CN) or (DBC) with $u_{0}=u_{0}^{k}$ that 
we denote by $u^{k}$. 
As a consequence of Perron's method, we have
\[ 
-M_{1}t +u_0^{k}(x) \le u^{k}(x,t) \le M_{1}t +u_0^{k}(x) 
\ \textrm{on} \ \cQ. 
\]

To conclude, we use a standard argument: 
comparing the solutions $u^{k}(x,t)$ and $u^{k}(x,t+h)$ 
for some $h>0$ and using the above property on the $u^{k}$, 
we have
$$ \|u^{k}(\cdot,\cdot+h)-u^{k}(\cdot,\cdot)\|_\infty 
\leq \|u^{k}(\cdot,h)-u^{k}(\cdot,0)\|_\infty \leq M_{1}h\; .$$ 
As a consequence we have $\|(u^{k})_t\|_\infty \leq M_{1}$ 
and, by using the equation together with (A1), 
we obtain that $Du^{k}$ is also bounded. 
Finally 
sending $k\to\infty$ by taking a subsequence if necessary 
we obtain the Lipschitz continuous solution of (CN) or (DBC).

We finally remark that, if $u_{0}\in C(\cO)$, we can obtain 
the existence of the uniformly continuous solution on $\cQ$ 
by using the above result for $u_{0}\in \W (\cO)$ and 
(\ref{eq:contsol}) which is a direct consequence of 
Theorem~\ref{thm:comparison}. 
\end{proof}

\subsection{Additive Eigenvalue Problems}

\begin{proof}[Proof of Theorem {\rm \ref{thm:additive}}]
We first prove (i). 
For any $\ep\in (0,1)$ we consider 
\begin{equation}\label{pf:additive1}
\left\{
\begin{aligned}
&
\ep \ve +H(x,D\ve )=0
&& \textrm{in} \ \Om, \\
&B(x,D\ve )=0
&& \textrm{on} \ \bO. 
\end{aligned}
\right.
\end{equation}
Following similar arguments as in the proof of 
Theorem {\rm \ref{thm:existence}}, it is easy to prove that, 
for $C>0$ large enough $-C/\ep$ and $C/\ep$ are, respectively, 
a subsolution and 
a supersolution of \eqref{pf:additive1} or \eqref{pf:additive2}.

We remark that, because of (A1) and the regularity of the boundary of $\Om$, 
the subsolutions $w$ of \eqref{pf:additive1} such that 
$-C/\ep  \leq w \leq C/\ep$ on $\cO$ satisfy 
$|Dw|\leq M_2$ in $\Om$ for some $M_{2}>0$ 
and therefore they are equi-Lipschitz continuous 
on $\cO$. 
With these informations, Perron's method provides us with 
a solution $u_{\ep}\in W^{1,\infty}(\Om)$ of \eqref{pf:additive1}. 
Moreover, by construction, we have
\begin{equation}\label{pf:additive3} 
|\ep u_{\ep}|\le M_{1} \ \textrm{on} \ \cO \quad \hbox{and}\quad
|Du_{\ep}|\le M_{2} \ \textrm{in} \ \Om.
\end{equation}

Next we set $v_{\ep}(x):=u_{\ep}(x)-u_{\ep}(x_{0})$ for a fixed $x_{0}\in\cO$. 
Because of \eqref{pf:additive3} and the regularity of the boundary $\bO$, 
$\{v_{\ep}\}_{\ep\in(0,1)}$ is a sequence of equi-Lipschitz continuous and uniformly bounded functions
on $\cO$.  
By Ascoli-Arzela's Theorem, 
there exist subsequences
$\{v_{\ep_{j}}\}_{j}$ 
and 
$\{u_{\ep_{j}}\}_{j}$ 
such that 
\[
v_{\ep_{j}}\to v, \;
\ep_{j}u_{\ep_{j}}\to -c \ 
\textrm{uniformly on} \ \cO
\]
as $j\to\infty$ 
for some $v\in W^{1,\infty}(\Om)$ and $c\in\R$. 
By a standard stability result of viscosity solutions 
we see that $(v,c)$ is a solution of (E1).

In order to prove (ii) we just need to consider 
\begin{equation}\label{pf:additive2}
\left\{
\begin{aligned}
&
\ep \ve +H(x,D\ve )=0
&& \textrm{in} \ \Om, \\
&\ep \ve+B(x,D\ve )=0
&& \textrm{on} \ \bO  
\end{aligned}
\right.
\end{equation}
instead of \eqref{pf:additive1}. 
By the same argument above we obtain a solution 
of (E2). 
\end{proof}


\noindent
\textbf{Acknowledgements. }
This work was partially done while the third author 
visited Mathematics Department, 
University of California, Berkeley. 
He is grateful to 
Professor Lawrence C. Evans for useful comments and his kindness.


\bibliographystyle{amsplain}
\providecommand{\bysame}{\leavevmode\hbox to3em{\hrulefill}\thinspace}
\providecommand{\MR}{\relax\ifhmode\unskip\space\fi MR }
\providecommand{\MRhref}[2]{%
  \href{http://www.ams.org/mathscinet-getitem?mr=#1}{#2}
}
\providecommand{\href}[2]{#2}

\end{document}